\documentclass[a4paper, 12pt]{amsart}
\usepackage{mathtools, amssymb, amsthm, graphicx, amsmath, xcolor}
\usepackage[hidelinks]{hyperref}
\usepackage[centering, heightrounded]{geometry}
\usepackage[normalem]{ulem}

\usepackage[
    backend=biber,
    style=ams-numeric,
    giveninits=true,
    maxnames=9,
]{biblatex}
\renewbibmacro*{bbx:dashcheck}[2]{#2}
\AtEveryBibitem{%
  \clearfield{issn}%
  \clearfield{isbn}%
  \clearfield{url}%
  \clearfield{doi}%
  \clearfield{number}%
}
\DeclareFieldFormat[article]{volume}{\mkbibbold{#1}}
\renewcommand{\a}{\alpha}
\renewcommand{\b}{\beta}
\renewcommand{\l}{\lambda}

\newcommand{\lr}[1]{\langle #1 \rangle}
\newcommand{\di}{\displaystyle}
\newcommand{\C}{\mathbb{C}}
\newcommand{\R}{\mathbb{R}}

\newcommand{\N}{\mathbb{N}}

\title[Defocusing NLS with point interaction]{Standing waves for defocusing nonlinear Schr\"odinger equations with point interaction}
\author[N. Fukaya]{Noriyoshi Fukaya}
\address{Regional ICT Research Center for Human, Industry and Future, 
The University of Shiga Prefecture,
Shiga 522-8533, Japan
\newline\indent
Osaka Central Advanced Mathematical Institute, 
Osaka Metropolitan University, 
Osaka 558-8585, Japan}
\email{fukaya.n@e.usp.ac.jp}

\author[Y. Osada]{Yuki Osada}
\address{Department of Mathematics, Faculty of Science Division I, Tokyo University of Science,
Tokyo 162-8601, Japan
}
\email{
yukiosada59@gmail.com}

\author[M. Rastrelli]{Mario Rastrelli}
\address{Dipartimento di Matematica, Universita Degli Studi di Pisa, Largo `
Bruno Pontecorvo, 5, 56127, Pisa, Italy and Department of Applied Physics, Waseda University Tokyo 169-8555, Japan}
\email{mario.rastrelli@phd.unipi.it}

\newtheorem{theorem}{Theorem}[section]
\newtheorem{lemma}[theorem]{Lemma}
\newtheorem{proposition}[theorem]{Proposition}
\newtheorem{corollary}[theorem]{Corollary}
\theoremstyle{remark}
\newtheorem{remark}[theorem]{Remark}

\begin{document}
\allowdisplaybreaks

\begin{abstract}
     We consider standing waves of the nonlinear Schrödinger equation
$i\partial_t u = -\Delta_\alpha u + |u|^{p-1}u$
in the defocusing case in dimensions $N=2$ and $N=3$. Here, $-\Delta_\alpha$ denotes the Laplacian with a point interaction. This operator is bounded from below by a negative constant; consequently, unlike in the free case, the associated energy functional admits non-trivial minimizers. We establish existence and uniqueness of standing waves, and prove further qualitative properties, including radial symmetry, positivity, and  stability. Moreover, we build an appropriate functional space for the zero-mass  case and establish sharp decay estimates in this case. 
\end{abstract}

\maketitle

\section{Introduction}
We consider the following defocusing nonlinear Schr\"{o}dinger equation:
\begin{equation} \label{NLS}
    i\partial_t u
    =-\Delta_\alpha u
    +|u|^{p-1}u,\quad 
    (t, x)\in\R\times \R^N,
\end{equation} 
where $u$ is the complex valued unknown function of $(t, x)$, $N\in\{2, 3\}$ is the spatial dimension, $-\Delta_\alpha$ is the Schr\"odinger operator with a point interaction at the origin with strength parameter
\begin{equation}\label{eq:cond-al}
    \alpha
    \in \begin{cases}
    \hfil \R, & N=2,
\\  (-\infty, 0), & N=3,
    \end{cases}
\end{equation}
and
\begin{equation}\label{eq:cond-p}
    1<p<
    \begin{cases}
    \infty, & N=2,
\\  \hfil 2, & N=3.
    \end{cases}
\end{equation}


With the notation $-\Delta_\alpha$, we refer to the one-parameter family of self-adjoint operators that extends 
\[  -\Delta|_{C^\infty_{\mathrm{c}}(\mathbb{R}^N\backslash\{0\})}
    \]
in dimensions $N=2,3$. 
The description of the domain and the action were obtained in \cite{AGKH05}, using the the decomposition of $L^2(\mathbb{R}^N)$ into radial and spherical parts and the Weyl's limit-point and limit-circle theory \cite{ReedSimon}. 
Fixing $\lambda>0$, we call with $G_\lambda$ the fundamental $L^2$-solution of the standard Laplacian:
\[  (\lambda-\Delta)G_\lambda
    =\delta_0\ \mathrm{in}\ \mathcal{D}',\]
where $\delta_0$ is the Dirac delta function with its support at the origin. The function $G_\lambda $ is radial, stays  in $ H^{s}(\mathbb{R}^N)$ for every $s<2-N/2$, has an exponential decay at infinity. Moreover, $G_\lambda$ is singular at the origin, with logarithmic singularity for $N=2$ and $|x|^{-1}$-singularity for $N=3$. 

The domain of $-\Delta_\alpha$ and the action are characterized in the following way:

\begin{gather}\label{eq.domain}
    D(-\Delta_\alpha)=\Bigl\{u\in L^2(\mathbb{R}^N)\,\Big|\;u=f_\lambda+cG_\lambda,\: f_\lambda\in H^2(\mathbb{R}^N),
    (\alpha+\beta(\lambda))c=f_\lambda(0)\Bigr\},
\\  (\lambda-\Delta_\alpha)u=(\lambda-\Delta)f_\lambda,
\end{gather}
with $\lambda>0$ and 
\begin{align*}
\b(\l) = 
\left\{\begin{alignedat}{2}
   &\frac{1}{2 \pi} \biggl(\gamma+\log \Bigl(\frac{\sqrt{\l}}{2}\Bigr) 
\biggr), & \quad
   &N=2,
\\ &\frac{\sqrt{\l}}{4 \pi}, & \quad
   &N=3,
\end{alignedat}\right.
\end{align*}
where $\gamma$ is the Euler-Mascheroni constant. 
Thanks to the asymptotic expansion in zero of $G_\lambda$ it can be easily proved that the decomposition in regular part and singular part and the action are independent of the choice of the parameter $\lambda>0$. 

Under \eqref{eq:cond-al}, there exists a unique $e_\alpha<0$ such that $\alpha+\beta(-e_\alpha)=0$. More precisely,
\begin{equation}
    e_\alpha=
    \left\{\begin{alignedat}{2}
    &\mathopen{}-4e^{-4\pi\alpha-2\gamma}, & \quad 
    &N=2,
\\  &\mathopen{}-(4\pi\alpha)^2, & \quad 
    &N=3,\: \alpha<0.
    \end{alignedat}\right.
\end{equation}
For convenience, we introduce the notation
\begin{equation}
    \omega_\alpha 
    :=|e_\alpha|.
\end{equation}
We note that, if $\lambda\ne \omega_\alpha$, then any element $u\in D(-\Delta_\alpha)$ can be decomposed as
\[  u=f+\frac{f(0)}{\alpha+\beta(\lambda)}G_\lambda \]
for some $f\in H^2(\R^2)$.

It is known that $e_\alpha$ is the unique negative eigenvalue of $-\Delta_\alpha$, that is,
\[
\sigma_{\mathrm{p}}(-\Delta_\alpha)=
\begin{cases}
\emptyset, & N=3,\ \alpha\geq0,\\
\{e_\alpha\}, & \text{otherwise}.
\end{cases}
\]
Moreover,
\[
\sigma_{\mathrm{ess}}(-\Delta_\alpha)
=\sigma_{\mathrm{ac}}(-\Delta_\alpha)
=[0,\infty).
\]
Therefore,
\begin{equation}
     \sigma(-\Delta_\alpha)=
    \left\{\begin{aligned}
    &\mathopen{}[0,\infty),& 
    &N=3,\ \alpha\geq0,
\\  &\mathopen{}\{e_\alpha\}\cup[0,\infty),& 
    &\text{otherwise}.
    \end{aligned}\right.
\end{equation}
Accordingly, in what follows we assume that $\alpha$ lies in the range where a simple negative eigenvalue exists, as in \eqref{eq:cond-al}.
The corresponding normalized eigenfunction for the eigenvalue $e_\alpha$ is 
\begin{equation} \label{eq:chial}
    \chi_\alpha
    =\frac{G_{-e_\alpha}}{\|G_{-e_\alpha}\|_{L^2}}.
\end{equation}

The energy space $H^1_\alpha(\R^N):=D[-\Delta_\alpha]$ is the domain of the closure of the quadratic form $\langle-\Delta_\alpha u, u\rangle$ and is characterized as 
\begin{equation}\label{eq.quadratic domain}
    H^1_\alpha(\mathbb{R}^N)
    =\{f+cG_\lambda\,|\;
    f\in H^1(\mathbb{R}^N),\: c\in\mathbb{C}\},
\end{equation}
where, with $\langle\cdot,\cdot\rangle$, we denote 
\begin{equation}
    \langle u,v\rangle=\int_{\mathbb{R}^N}u(x)\overline{v(x)} dx.
\end{equation}
Moreover, the form is bounded from below $\langle-\Delta_\alpha u,u\rangle\geq e_\alpha\|u\|_{L^2}$ so, for every $\lambda>\omega_\alpha$, we can extend it on $H^1_\alpha(\R^N)$ as 
\begin{equation}
    \langle(-\Delta_\alpha+\lambda) u,u\rangle= \|\nabla f\|_{L^2}^2+\lambda\|f\|_{L^2}^2+(\alpha+\beta(\lambda))|c|^2
\end{equation}
for $u=f+cG_\lambda\in H_\alpha^1(\R^N)$. This positive quadratic form defines $\lambda$-dependent norms on the space $H^1_\alpha(\R^N)$ as
\begin{equation}
    \|u\|_{H^1_\alpha,\lambda}:=\sqrt{\langle(-\Delta_\alpha +\lambda)u,u\rangle}.
\end{equation}
For every $\lambda,\mu>\omega_\alpha$, $\|u\|_{H^1_\alpha,\lambda}$ and $\|u\|_{H^1_\alpha,\mu}$ are equivalent, so we fix  
\begin{equation}
    \|u\|_{H^1_\alpha}:=\|u\|_{H^1_\alpha,1+\omega_\alpha}.
\end{equation}
The study of perturbed spaces was extended to the fractional cases $H^s_\alpha(\mathbb{R}^N)$ in \cite{GMS18,GMS24} and also in the $L^p$ cases with the spaces $H^{s,p}_\alpha(\mathbb{R}^N)$ in  \cite{GR25,GR25a}. 

Regarding the Cauchy problem 
\begin{equation}\label{eq:Cauchy}
    \left\{\begin{aligned}
        &i\partial_tu=-\Delta_\alpha u \pm|u|^{p-1}u, &(t,x)\in \mathbb{R}\times\mathbb{R}^N\\
        &u(0)=u_0,
    \end{aligned}\right.
\end{equation}
that covers both the focusing and defocusing cases, the first results of local well-posedness in $D(-\Delta_\alpha)$ can be found in \cite{CFN21} for both dimensions $N=2,3$. They also prove global existence with the conservation along the solutions the energy 
\begin{equation}
    \begin{aligned}
    E_\pm(u)
   :={}& \frac{1}{2} \lr{-\Delta_\a u,u} \pm \frac{1}{p+1} \|u\|_{L^{p+1}}^{p+1}
\\  ={}& \frac{1}{2} (\|\nabla f_\l\|_{L^2}^2 + \l \|f_\l\|_{L^2}^2 + (\a + \b(\l)) |c|^2 - \l \|u\|_{L^2}^2) \pm \frac{1}{p+1} \|u\|_{L^{p+1}}^{p+1},\\
    \end{aligned}
\end{equation}
and the mass $\|u\|_{L^2}$.

For $N=2$, in \cite{FGI22}, local existence and uniqueness of solutions to \eqref{eq:Cauchy} in the energy space $H^1_\alpha(\mathbb{R}^2)$ were proved by using the abstract theory in \cite{OSY12} and the Strichartz estimates obtained in \cite{CMY19}. In \cite{GR25}, continuous dependence on the initial data was also established by means of the Strichartz. We note that, for $N=3$ and $1<p<2$, the local well-posedness of \eqref{eq:Cauchy} in the energy space $H^1_\alpha(\R^3)$ can be proved by using the Strichartz estimates in \cite{DMSY18} together with the above methods.
{ To be more precise, when we talk about $H^1_\alpha$-solutions we refer to the following results.
\begin{proposition}
    Let $\alpha\in \mathbb{R}$ and $p>1$ for $N=2$ and $1<p<2$ for $N=3$. For every $u_0\in H^1_\alpha(\mathbb{R}^N)$ there exists a unique maximal solution
    \begin{equation}
        u\in C((-T_{min},T_{max}),H^1_\alpha(\mathbb{R}^N))\cap C^1((-T_{min},T_{max}),H^{-1}_\alpha(\mathbb{R}^N))
    \end{equation}
    of \eqref{eq:Cauchy}  
    with initial data $u(0)=u_0$, where $H^{-1}_\alpha(\mathbb{R}^N)$ is the dual space of $H^{1}_\alpha(\mathbb{R}^N)$.
\end{proposition}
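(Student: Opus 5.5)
We follow the abstract scheme of \cite{OSY12}, as done in \cite{FGI22} for $N=2$. It reduces local well-posedness in the energy space to two ingredients: Strichartz estimates for the linear group $e^{it\Delta_\alpha}$, and a suitable control of the nonlinearity. The first ingredient comes from \cite{CMY19} for $N=2$ and from \cite{DMSY18} for $N=3$. These give, for admissible pairs $(q,r)$,
\[
\|e^{it\Delta_\alpha}P_{\mathrm{ac}}u_0\|_{L^q_tL^r_x}\lesssim\|u_0\|_{L^2}.
\]
The eigenfunction component along $\chi_\alpha$ evolves trivially, as $e^{-ie_\alpha t}$, so it causes no difficulty on bounded time intervals. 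We get the corresponding inhomogeneous estimates by the Christ--Kiselev lemma.

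Next we lift these estimates to the $H^1_\alpha$ level. Since $(-\Delta_\alpha+\lambda)^{1/2}$ commutes with the flow, we apply the estimates to $(-\Delta_\alpha+\lambda)^{1/2}u$ and work in the space
\[
X_T=\{u:\ (-\Delta_\alpha+\lambda)^{1/2}u\in L^q_TL^r_x \text{ for all admissible }(q,r)\}.
\]
For the nonlinear estimate we use the equivalence of the perturbed Sobolev norms $H^{1,r}_\alpha$ with $\|(-\Delta_\alpha+\lambda)^{1/2}u\|_{L^r}$, available from \cite{GR25,GR25a} in a range of $r$. In that range, $u=f+cG_\lambda$ with $f\in W^{1,r}$, and the $G_\lambda$ part lies in the relevant $L^r$ spaces.

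The main obstacle is that $|u|^{p-1}u$ does not preserve the structure $f+cG_\lambda$. The gradient of $G_\lambda$ fails to be in $L^2$, so we cannot estimate $\nabla(|u|^{p-1}u)$ naively. To avoid this, we do not put derivatives on the nonlinearity. Instead we follow \cite{OSY12}: prove existence by a compactness argument, using a regularized equation together with uniform bounds from conservation of mass and energy. We then prove uniqueness and continuity in $L^q_TL^r_x$ via Strichartz at the $L^2$ level, where the estimate
\[
\bigl||u|^{p-1}u-|v|^{p-1}v\bigr|\lesssim(|u|^{p-1}+|v|^{p-1})|u-v|
\]
combined with the embedding $H^1_\alpha\subset L^s$ suffices. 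That embedding holds for all $s<\infty$ when $N=2$ and for $s<3$ when $N=3$, since $G_\lambda\sim|x|^{-1}$.

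This embedding is exactly where the restriction $p<2$ for $N=3$ enters: we need $u\in L^{p+1}$ with $p+1<3$. It is also why the energy, and in particular the term $\|u\|_{L^{p+1}}^{p+1}$, is well defined on $H^1_\alpha$. Finally, we get the $C^1$ regularity into $H^{-1}_\alpha$ from the equation. The reason is that $-\Delta_\alpha$ maps $H^1_\alpha$ boundedly to $H^{-1}_\alpha$, and $|u|^{p-1}u\in L^{(p+1)/p}\subset H^{-1}_\alpha$ by duality with the embedding $H^1_\alpha\subset L^{p+1}$. We obtain maximality by the standard blow-up alternative.
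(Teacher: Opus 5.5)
Your proposal follows essentially the same route as the paper, which gives no separate proof and obtains the proposition by combining the abstract energy method of \cite{OSY12} with the Strichartz estimates of \cite{CMY19} for $N=2$ and \cite{DMSY18} for $N=3$, exactly as you outline. When filling in details, check three points: for $N=3$ the Strichartz estimates coming from \cite{DMSY18} are available only for admissible pairs with $r<3$ (this still suffices for your $L^2$-level uniqueness argument because $p<2$); for the focusing sign in \eqref{eq:Cauchy} with $N=2$, $p\ge 3$, conservation of mass and energy alone does not bound the regularized solutions in $H^1_\alpha$, so that bound has to be obtained locally in time by a continuity argument; and the compactness argument alone gives only weak continuity, so $u\in C((-T_{min},T_{max}),H^1_\alpha(\mathbb{R}^N))$ must be derived from energy conservation, which in turn follows from uniqueness and time reversibility.
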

}

In this paper we study standing waves for \eqref{NLS} of the form
\[  u(t, x)
    =e^{i\omega t}\phi_\omega(x),  \]
where $\omega\in \R$ and $\phi$ solves the equation
\begin{equation} \label{SP}
    (\omega-\Delta_\alpha)\phi _\omega
    +|\phi_\omega|^{p-1}\phi_\omega 
    =0,\quad 
    x\in\R^N. 
\end{equation}
The literature on the focusing case (corresponding to negative nonlinearity in \eqref{eq:Cauchy}) is extensive, starting from the unperturbed problem treated in \cite{BC81, CL82}, where it is proved that standing waves of the form $e^{i\omega t}\phi_\omega$ for the equation
\begin{equation}
    i\partial_t u = -\Delta u - |u|^{p-1}u
\end{equation}
with ground-state profile $\phi_\omega$ are orbitally stable for every $\omega>0$ if $1 < p < 1 + 4/N$, and are unstable for every $\omega >0$ if $p \ge 1 + 4/N$.
Concerning the point-interaction Laplacian $-\Delta_\alpha$, in \cite{FGI22} existence and qualitative properties of ground states are established in dimension $N=2$ for $\omega > -e_\alpha$. A stability and instability picture similar to the classical case is recovered: for frequencies $\omega$ close to $-e_\alpha$, the standing waves are stable for every $p > 1$, whereas for large $\omega$, they are stable if $1<p\leq3$ and unstable if $p>3$.  Note that, in the critical case $p=3$, stability holds due to the attractive nature of the point interaction. In \cite{FN23}, blow-up is proved for $\omega > -e_\alpha$ when $p > 3$. The existence and properties of ground states under a fixed-mass constraint are established in dimension $N=2$ in \cite{ABCT22} and in dimension $N=3$ in \cite{ABCT}. We also mention \cite{Fukaya25}, where uniqueness and nondegeneracy are obtained via the Pohozaev identity for $N=2$.

{
The connection between the perturbed and the unperturbed elliptic problems associated with \eqref{eq:Cauchy} has been recently investigated in \cite{BNS26}. In particular, the authors prove the equivalence, in dimensions $N=2,3$, for $p>1$ and $\omega>0$, between the problem
\begin{equation}
    (\omega-\Delta_\alpha)u \pm |u|^{p-1}u = 0, 
    \quad x \in \R^N,
\end{equation}
and the corresponding unperturbed equation posed on the punctured space
\begin{equation}
    (\omega-\Delta)u \pm |u|^{p-1}u = 0, 
    \quad x \in \R^N \setminus \{0\}.
\end{equation}
Related problems have also been considered in different settings. In \cite{OP25}, a system of weakly coupled elliptic equations involving $-\Delta_\alpha$ is analyzed, while in \cite{BGR26} a parabolic problem with point interaction is studied.

These results highlight the strong interplay between the perturbed and unperturbed frameworks, while also showing that the presence of the point interaction leads to genuinely new features at the variational level. In this direction, we show that if a negative eigenvalue $e_\alpha$ exists, then for each $\omega \in [0, |e_\alpha|)$ there exists a unique standing wave, and all such standing waves are orbitally stable. 
}
The existence of a negative eigenvalue plays a crucial role in the variational structure of the problem and in the bifurcation of standing waves. Our analysis is inspired by the paper of Kaminaga and Ohta \cite{KO09}. They studied the one-dimensional defocusing nonlinear Schrödinger equation with an attractive delta potential, namely, the equation with the operator $-\partial_x^2-\delta_0$, which corresponds to the one-dimensional case of our equation \eqref{NLS}. In particular, they proved that standing waves are stable for all $\omega$, including the case $\omega=0$. The zero-mass case $\omega=0$ has also been extensively studied in the context of nonlinearities of mixed defocusing–focusing type, derivative nonlinear Schrödinger equations, and various systems; see, for example, \cite{FH21, FH25, FHI17, FHI24, G18, H22, LN25, NOW17, KW18}.

We now state our main results. The action functional associated with \eqref{SP} is given by
\begin{align*}
    S_\omega(u)
    &:=E(u)
    +\frac{\omega}{2}\|u\|_{L^2}^2,
    \quad u\in H_\alpha^1(\R^N),
\end{align*}
{where $E(u) := \frac{1}{2}\langle -\Delta_\alpha u, u\rangle 
    +\frac{1}{p+1}\|u\|_{L^{p+1}}^{p+1}.$}
If $\phi\in H_\alpha^1(\R^N)$, then $\phi$ solves \eqref{SP} if and only if $S_\omega'(\phi)=0$. This formulation, based on $H^1$, works well for applying variational methods in the case $\omega>0$. On the other hand, in the zero-mass case $\omega=0$, since $S_0=E$ does not contain the $L^2$ norm, this approach does not work in this setting. Therefore, we modify it so that it is based on $L^{p+1}\cap \dot{H}^1$. This choice is natural in view of both the quadratic and the nonlinear parts of the energy functional. Now we define the function spaces 
\begin{equation}\label{eq:defX0}
    X_{0} 
    :=\{f+qG_\lambda\,|\; f\in (L^{p+1}\cap \dot{H}^1)(\R^N),\: q\in\mathbb{C}\}, 
\end{equation}
where 
\begin{align*}
(L^{p+1}\cap \dot{H}^1)(\R^N) = \{f \in L^{p+1}(\R^N) \,|\; \nabla f \in L^2(\R^N)\}
\end{align*}
and equipped the norm
\begin{align*}
\|f\|_{L^{p+1}\cap \dot{H}^1} = \|f\|_{L^{p+1}} + \|\nabla f\|_{L^2}. 
\end{align*}
We note that $X_0$ does not depend on the choice of $\lambda>0$. Since the space $X_0$ is larger than the function space $H_\alpha^1(\R^N)$, we need to extend the domain of the quadratic form $u\mapsto \langle -\Delta_\alpha u, u\rangle$. Our first result concerns the extension of this quadratic form.

\begin{proposition}\label{p.X0}
The operator $-\Delta_\alpha$ admits an extension $-\widetilde{\Delta}_\alpha$ to $X_0$ in the sense of bilinear forms as follows:
\begin{equation}
    \begin{aligned}
    -\widetilde{\Delta}_\alpha\colon X_0 &\to X_0^*,\\
    u&\mapsto \langle -\widetilde{\Delta}_\alpha u, \cdot\rangle_{X_0^*,X_0},
    \end{aligned}
\end{equation}
where $X_0^*$ denotes the dual space of $X_0$, and
\begin{equation} \label{eq:a(u, v)qua}
    \langle -\widetilde{\Delta}_\alpha u, v\rangle_{X_0^*,X_0}
    :=\langle\nabla f_\lambda,\nabla g_\lambda\rangle
    + (\alpha + \beta(\lambda)+\lambda\|G_\lambda\|_{L^2}^2)c \overline{d}
    -\lambda\overline{d}\langle u, G_\lambda\rangle
    -\lambda c\overline{\langle v, G_\lambda\rangle},
\end{equation}
for every $u=f_\lambda+cG_\lambda$ and $v=g_\lambda+dG_\lambda$ in $X_0$.
In particular, for any $u, v\in H_\alpha^1(\R^N)$, the identity
\[
\langle -\widetilde{\Delta}_\alpha u, v\rangle_{X_0^*,X_0}
=
\langle -\Delta_\alpha u, v\rangle
\]
holds.
\end{proposition}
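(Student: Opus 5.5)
The plan is to check four things directly from the formula \eqref{eq:a(u, v)qua}: that it is well defined on $X_0$, that it is independent of $\lambda$, that it is bounded, and that it reduces to the quadratic form on $H^1_\alpha(\R^N)$.

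\emph{Well-definedness and boundedness.} First, the decomposition $u=f_\lambda+cG_\lambda$ is unique. Indeed $G_\lambda\notin L^{p+1}\cap\dot H^1$, since $\nabla G_\lambda\notin L^2$ near the origin for $N=2,3$. Hence $c$ is determined by $u$, and $f_\lambda=u-cG_\lambda$ is determined as well. We give $X_0$ the norm $\|f_\lambda\|_{L^{p+1}\cap\dot H^1}+|c|$.

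Each term in \eqref{eq:a(u, v)qua} is then controlled as follows.
\begin{itemize}
\item $|\langle\nabla f_\lambda,\nabla g_\lambda\rangle|\le\|\nabla f_\lambda\|_{L^2}\|\nabla g_\lambda\|_{L^2}$ by Cauchy--Schwarz.
\item The constant term is harmless because $G_\lambda\in L^2$.
\item For the pairing we write $\langle u,G_\lambda\rangle=\langle f_\lambda,G_\lambda\rangle+c\|G_\lambda\|_{L^2}^2$. By H\"older, $|\langle f_\lambda,G_\lambda\rangle|\le\|f_\lambda\|_{L^{p+1}}\|G_\lambda\|_{L^{(p+1)'}}$.
\end{itemize}
The last bound requires $G_\lambda\in L^{r}$ with $r=(p+1)/p\in(1,2)$. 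This holds because $G_\lambda$ decays exponentially at infinity, and near the origin it has a logarithmic singularity ($N=2$) or a $|x|^{-1}$ singularity ($N=3$), which is $L^r$-integrable for $r<3$. The form is therefore a bounded sesquilinear form on $X_0\times X_0$, and $u\mapsto\langle-\widetilde\Delta_\alpha u,\cdot\rangle$ is a bounded map $X_0\to X_0^*$ (antilinear in the second slot, as usual).

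\emph{Independence of $\lambda$.} This is the main obstacle. For $\mu\ne\lambda$ we have $f_\mu=f_\lambda+c(G_\lambda-G_\mu)$, with the same $c$, and $G_\lambda-G_\mu\in H^2$. We substitute this into the formula at $\mu$ and expand $\|\nabla f_\mu\|^2$-type cross terms, using the identity
\[
\langle\nabla h,\nabla(G_\lambda-G_\mu)\rangle=\langle h,-\Delta(G_\lambda-G_\mu)\rangle=\langle h,\mu G_\mu-\lambda G_\lambda\rangle
\]
for $h\in L^{p+1}\cap\dot H^1$. This follows from $-\Delta(G_\lambda-G_\mu)=\mu G_\mu-\lambda G_\lambda$; it is justified by density of $C_c^\infty$ in $L^{p+1}\cap\dot H^1$, since both sides are continuous there ($\Delta(G_\lambda-G_\mu)\in L^{(p+1)'}\cap L^2$). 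The cross terms in $f$ then cancel against the changes in $-\lambda\overline d\langle u,G_\lambda\rangle$.

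The remaining $c\overline d$ coefficients must match. This reduces to the scalar identity
\[
\|\nabla(G_\lambda-G_\mu)\|^2+\ldots=\beta(\lambda)-\beta(\mu)+\ldots,
\]
which is the same identity that makes \eqref{eq.quadratic domain} and the form $\langle(-\Delta_\alpha+\lambda)u,u\rangle$ independent of $\lambda$. Rather than redo it by hand, we can shortcut it: once the $f$-dependent terms are shown to agree, the $c\overline d$ coefficient can be checked on $u=v=G_\lambda\in H^1_\alpha$, where it follows from the consistency statement proved below.

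\emph{Consistency on $H^1_\alpha(\R^N)$.} For $u,v\in H^1_\alpha$ with $\lambda>\omega_\alpha$ we have $f_\lambda,g_\lambda\in H^1$. We start from the polarized form of the known formula,
\[
\langle(-\Delta_\alpha+\lambda)u,v\rangle=\langle\nabla f_\lambda,\nabla g_\lambda\rangle+\lambda\langle f_\lambda,g_\lambda\rangle+(\alpha+\beta(\lambda))c\overline d .
\]
We subtract $\lambda\langle u,v\rangle$ and expand $\langle u,v\rangle=\langle f_\lambda+cG_\lambda,g_\lambda+dG_\lambda\rangle$. Collecting terms gives
\[
\lambda\langle f_\lambda,g_\lambda\rangle-\lambda\langle u,v\rangle=\lambda\|G_\lambda\|^2c\overline d-\lambda\overline d\langle u,G_\lambda\rangle-\lambda c\overline{\langle v,G_\lambda\rangle},
\]
which is exactly the expression in \eqref{eq:a(u, v)qua}. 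Since the formula is independent of $\lambda$ by the previous step, the restriction $\lambda>\omega_\alpha$ is harmless, and the identity holds for every $\lambda>0$.
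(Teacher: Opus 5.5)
There is one genuine gap, and it is in the independence-of-$\lambda$ step. To fix the leftover $c\overline{d}$ coefficient you test on $u=v=G_\lambda$. That step needs the consistency identity for \emph{both} parameters $\lambda$ and $\mu$. You prove consistency only for parameters $>\omega_\alpha$, because that is where you take the known formula for $\langle(-\Delta_\alpha+\lambda)u,v\rangle$ from. You then extend consistency to all $\lambda>0$ by invoking independence of $\lambda$.

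For $\lambda$ or $\mu$ in $(0,\omega_\alpha]$ this is circular, so your closing claim that the identity holds for every $\lambda>0$ is not established. The paper does state results with an unrestricted decomposition parameter, e.g.\ Lemma~\ref{lem:f(0)}. For $\lambda,\mu>\omega_\alpha$ your argument is fine. In that range the cross-term computation is not even needed: two bounded forms on $X_0$ that agree on the dense subspace $H^1_\alpha(\R^N)$ coincide.

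The missing ingredient is exactly the scalar identity you chose to skip, and it is the key step of the paper's proof. After your cross-term computation, use
$\|\nabla(G_\lambda-G_\mu)\|_{L^2}^2=-\lambda\|G_\lambda\|_{L^2}^2+(\lambda+\mu)\langle G_\lambda,G_\mu\rangle-\mu\|G_\mu\|_{L^2}^2$.
The leftover coefficient is then $(\lambda-\mu)\langle G_\lambda,G_\mu\rangle+\beta(\mu)-\beta(\lambda)$. This vanishes for all $\lambda,\mu>0$ by \eqref{rem.1}, which is a direct computation (e.g.\ via Plancherel).

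Two alternative repairs are also available.
\begin{itemize}
\item The formula $\langle(-\Delta_\alpha+\lambda)u,v\rangle=\langle\nabla f_\lambda,\nabla g_\lambda\rangle+\lambda\langle f_\lambda,g_\lambda\rangle+(\alpha+\beta(\lambda))c\overline{d}$ holds for every $\lambda>0$. Check it on $D(-\Delta_\alpha)$ using $f_\lambda(0)=(\alpha+\beta(\lambda))c$, then extend by continuity.
\item The leftover constant does not involve $\alpha$. So your shortcut can be run for an auxiliary $\alpha'$ with $\omega_{\alpha'}<\min\{\lambda,\mu\}$.
\end{itemize}

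With that repair, your route is a valid alternative to the paper's. The paper goes through the quadratic functional $Q$, the parallelogram law and polarization, and only asserts consistency on $H^1_\alpha$ ``by construction''. You work directly with the sesquilinear formula. You also explicitly justify uniqueness of the decomposition $u=f_\lambda+cG_\lambda$, which the paper uses tacitly, and you verify consistency by an actual computation.
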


In what follows, for notational simplicity, we also write the extended operator $-\widetilde{\Delta}_\alpha$ as $-\Delta_\alpha$ and the dual coupling $\langle\cdot,\cdot\rangle_{X_0^*,X_0}$ as $\langle\cdot,\cdot\rangle$. We introduce the action functional associated with \eqref{SP} for $\omega=0$ by 
\[  S_0(u)
    :={\frac{1}{2}}\langle -\Delta_\alpha u, u\rangle 
    +\frac{1}{p+1}\|u\|_{L^{p+1}}^{p+1},\quad 
    u\in X_0.   \]
We denote the set of all nontrivial solutions of \eqref{SP} by 
\[  \mathcal{A}_\omega 
    :=\begin{cases}
    \{\phi \in H_\alpha^1(\R^N)\,|\;
    \phi \ne 0,\: S_\omega'(\phi) = 0\}, &
    \omega>0
\\  \{\phi \in X_0\,|\;
    \phi \ne 0,\: S_0'(\phi) = 0\}, &
    \omega=0. 
    \end{cases} \]
Now we state our main results on the properties of standing waves.

\begin{theorem} \label{theo1}
For $0\le \omega<\omega_\alpha$, $\mathcal{A}_\omega\ne\emptyset$. 
\end{theorem}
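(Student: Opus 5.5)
Both cases will be handled by direct minimization: for $0\le\omega<\omega_\alpha$ I will show that $S_\omega$ is coercive and weakly lower semicontinuous on the relevant space ($H^1_\alpha(\R^N)$ for $\omega>0$, $X_0$ for $\omega=0$), and that its infimum is negative. A global minimizer $\phi$ then exists, satisfies $S_\omega'(\phi)=0$, and is nontrivial because $S_\omega(\phi)<0=S_\omega(0)$.

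\emph{Negativity.} Take $u=\varepsilon\chi_\alpha$, where $\chi_\alpha$ is the eigenfunction in \eqref{eq:chial}. Since $\chi_\alpha\in H^1_\alpha\subset X_0$ and it lies in $L^{p+1}$ (the singularity of $G$ is integrable for $p<2$ when $N=3$),
\[
S_\omega(\varepsilon\chi_\alpha)=\tfrac{\varepsilon^2}{2}(\omega-\omega_\alpha)+\tfrac{\varepsilon^{p+1}}{p+1}\|\chi_\alpha\|_{L^{p+1}}^{p+1}<0
\]
for small $\varepsilon>0$. Here I use Proposition~\ref{p.X0} to identify the form on $H^1_\alpha$ when $\omega=0$.

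\emph{Coercivity for $\omega>0$.} Decompose $u=a\chi_\alpha+w$ with $w\perp\chi_\alpha$. The spectral theorem gives $\langle(-\Delta_\alpha+\omega)w,w\rangle\ge\omega\|w\|_{L^2}^2$. Combined with the form bound $\langle(-\Delta_\alpha+\lambda)w,w\rangle$, this controls $\|w\|_{H^1_\alpha}^2$ up to constants: interpolate $\langle(-\Delta_\alpha+\omega)w,w\rangle\ge\theta\|w\|^2_{H^1_\alpha}$ for a small $\theta>0$. The only negative contribution is $-\tfrac{\omega_\alpha-\omega}{2}|a|^2$. Since $|a|\le\|\chi_\alpha\|_{L^{(p+1)'}}\|u\|_{L^{p+1}}$ by Hölder, the superquadratic term $\tfrac{1}{p+1}\|u\|_{L^{p+1}}^{p+1}$ dominates $C\|u\|_{L^{p+1}}^2$ at infinity. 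Hence
\[
S_\omega(u)\ge\tfrac{\theta}{2}\|w\|^2_{H^1_\alpha}+\tfrac{1}{p+1}\|u\|^{p+1}_{L^{p+1}}-C\|u\|_{L^{p+1}}^2,
\]
which bounds minimizing sequences in $H^1_\alpha$. For weak lower semicontinuity, pass to a weak limit $u_n\rightharpoonup u$. Then $a_n\to a$ because this is a single scalar coefficient. The quadratic form on $w_n$ is convex and weakly l.s.c., and the $L^{p+1}$ norm is weakly l.s.c. (Fatou after a.e.\ convergence from local compactness). So the infimum is attained.

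\emph{Case $\omega=0$.} Here lies the main obstacle: there is no $L^2$ control, so I work in $X_0$ with the extended form from \eqref{eq:a(u, v)qua}. Write $u=f+cG_\lambda$. The form equals $\|\nabla f\|_{L^2}^2+(\alpha+\beta(\lambda)+\lambda\|G_\lambda\|^2)|c|^2-2\lambda\,\mathrm{Re}\,\bar c\langle u,G_\lambda\rangle$. The cross term is bounded by $|c|\,\|u\|_{L^{p+1}}\|G_\lambda\|_{L^{(p+1)'}}$, which is finite since $G_\lambda$ decays exponentially and is integrable near $0$. Moreover $|c|$ is controlled by $\|u\|_{L^{p+1}}$ and $\|\nabla f\|_{L^2}$, since the coefficient of a singular profile is recoverable locally. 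Hence $S_0(u)\ge\tfrac12\|\nabla f\|_{L^2}^2+\tfrac{1}{p+1}\|u\|^{p+1}_{L^{p+1}}-C(\|u\|_{L^{p+1}}^2+\|u\|_{L^{p+1}}\|\nabla f\|_{L^2})$, where the second-to-last constant is handled by choosing $\lambda$ large (using $\beta(\lambda)\to\infty$) and Young's inequality. Consequently minimizing sequences are bounded in $X_0$. After extracting weak limits ($\nabla f_n\rightharpoonup\nabla f$, $c_n\to c$, $u_n\rightharpoonup u$ in $L^{p+1}$), every term of $S_0$ is either continuous ($c$-terms, and $\langle u,G_\lambda\rangle$ is a weakly continuous linear functional) or convex and l.s.c. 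So the infimum is attained in $X_0$, and the minimizer lies in $\mathcal A_0$.
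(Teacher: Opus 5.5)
Your proposal is correct and follows essentially the paper's argument (Lemmas~\ref{lemm1}--\ref{lemm4}). You minimize $S_\omega$ over $X_\omega$, get $d(\omega)<0$ from $\varepsilon\chi_\alpha$, get coercivity from the explicit form with $\lambda$ large plus H\"older on the cross term $\lambda\operatorname{Re}\overline{c}\langle u,G_\lambda\rangle$, and conclude by weak lower semicontinuity, with nontriviality from $S_\omega(\phi)<0=S_\omega(0)$. The only deviations are minor: for $\omega>0$ you use the spectral splitting $u=a\chi_\alpha+w$ instead of the paper's single lower bound, and for $\omega=0$ you drop the $|c|^2$ term and recover $|c|$ by a local test-function estimate (valid, e.g.\ pair $u$ with $-\Delta\eta$ for $\eta\in C_c^\infty$ with $\eta(0)\ne\lambda\int G_\lambda\eta$), whereas the paper simply keeps $(\alpha+\beta(\lambda)+\lambda\|G_\lambda\|_{L^2}^2)|c|^2$ with positive coefficient and absorbs the cross term by Young.
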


\begin{theorem} \label{thm:uniq}
For $0\le \omega<\omega_\alpha$, there exists the unique positive, radial, and decreasing function $\phi_\omega$ such that 
\[  \mathcal{A}_\omega 
    =\{e^{i\theta}\phi_\omega\,|\; 
    \theta\in \R\}. \]
\end{theorem}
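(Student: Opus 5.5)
Given Theorem~\ref{theo1}, the task is to show that every nontrivial solution is a constant phase times one fixed positive, radial, decreasing profile. The plan is: reduce to radial solutions of an ODE on $(0,\infty)$ with a prescribed singular behaviour at the origin, then prove uniqueness by a convexity argument.

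\textbf{Step 1: the minimizer has the claimed shape.}
\begin{itemize}
\item $S_\omega$ is coercive and weakly lower semicontinuous. For $\omega>0$ this holds on $H^1_\alpha$. For $\omega=0$ it holds on $X_0$, using Proposition~\ref{p.X0} and the fact that $\omega<\omega_\alpha$.
\item $\min S_\omega<0$, as one sees by testing with $\epsilon\chi_\alpha$: the quadratic part equals $\frac{\epsilon^2}{2}(\omega+e_\alpha)<0$ and dominates the $\epsilon^{p+1}$ term.
\item Hence there is a global minimizer $\phi$. This is presumably how Theorem~\ref{theo1} is proved.
\item The minimizer can be taken positive, radial and decreasing. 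Replace $\phi$ by $|\phi|$. Decompose $|\phi|=f+cG_\lambda$ and apply symmetric decreasing rearrangement to the regular part. The Pólya--Szegő inequality keeps $\|\nabla f\|$ from increasing, and $G_\lambda$ is already radial and decreasing. Handling the coupling terms in the quadratic form needs care, so an alternative is the Riesz rearrangement inequality applied to the resolvent representation.
\item Strict positivity follows from the maximum principle for $-\Delta_\alpha+\omega+|\phi|^{p-1}$ away from the origin. Near $0$ we also have $c>0$, because $c$ is proportional to $\langle \phi,\chi_\alpha\rangle$-type quantities.
\end{itemize}

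\textbf{Step 2: every solution has a constant phase and is radial.} Take an arbitrary $\phi\in\mathcal{A}_\omega$.
\begin{itemize}
\item Away from the origin, $\phi$ solves $-\Delta\phi+\omega\phi+|\phi|^{p-1}\phi=0$ in $\R^N\setminus\{0\}$.
\item Pairing the equation with $\phi$ gives $\langle(-\Delta_\alpha+\omega)\phi,\phi\rangle=-\|\phi\|_{L^{p+1}}^{p+1}<0$. So the singular coefficient $c\neq0$, since otherwise the form would be nonnegative.
\item For constant phase I would use the diamagnetic and convexity trick. Compare $S_\omega(|\phi|)\le S_\omega(\phi)$, and note that critical points with negative quadratic part are exactly minimizers by the uniqueness below. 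Alternatively, write $\phi=e^{i\theta(x)}\rho$ and use the phase equation $\nabla\cdot(\rho^2\nabla\theta)=0$ together with decay to show that $\theta$ is constant.
\item For radial symmetry, rather than invoking symmetry results directly, I would use the approach below, which needs no radiality.
\end{itemize}

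\textbf{Step 3: uniqueness via hidden convexity (main step).} For the defocusing problem I would use the convexity of $\rho\mapsto \frac12\|\nabla\sqrt\rho\|^2$ (the Benguria--Brezis--Lieb argument) adapted to the point interaction.
\begin{itemize}
\item Restrict to nonnegative $u$ and write $u=\sqrt{\rho}$. The functional $\rho\mapsto S_\omega(\sqrt\rho)$ is strictly convex:
  \begin{itemize}
  \item the nonlinear term $\frac1{p+1}\int\rho^{(p+1)/2}$ is strictly convex since $p>1$;
  \item the kinetic part of $\langle-\Delta_\alpha u,u\rangle$ is convex in $\rho$;
  \item the negative point-interaction contribution must be shown to be convex as well.
  \end{itemize}
\item This last item is the step I expect to be the main obstacle. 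I would attack it by writing the form as $\inf$ over representations, or via the ground-state substitution $u=\chi_\alpha w$, which gives
\[\langle(-\Delta_\alpha+\omega)u,u\rangle=\int \chi_\alpha^2|\nabla w|^2+(\omega+e_\alpha)\int|u|^2.\]
  \begin{itemize}
  \item This identity is valid on $H^1_\alpha$ since $\chi_\alpha$ is the positive ground state.
  \item With $w=\sqrt{\rho/\chi_\alpha^2}$, the term $\int\chi_\alpha^2|\nabla\sqrt{\rho/\chi_\alpha^2}|^2$ is convex in $\rho$. It is a perspective-type functional, i.e.\ the Fisher-information form with weight.
  \item The term $(\omega+e_\alpha)\int\rho$ is linear in $\rho$.
  \end{itemize}
\item Any nonnegative critical point is then the unique minimizer of a strictly convex functional. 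To justify this, show that a positive critical point satisfies the Euler--Lagrange inequality for the convex functional on the cone, by differentiating along $\sqrt{(1-t)\rho_1+t\rho_2}$.
\item Combining with Step 2, which shows that any solution is $e^{i\theta}$ times a nonnegative solution, we obtain $\mathcal{A}_\omega=\{e^{i\theta}\phi_\omega\}$.
\item For $\omega=0$ the same argument runs in $X_0$. The substitution identity must be extended by density using Proposition~\ref{p.X0}.
\end{itemize}
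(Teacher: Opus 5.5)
\textbf{Main gap: Step 2 does not reduce a general solution to a real one.} Step 2 is meant to show that an arbitrary $\phi\in\mathcal{A}_\omega$ is a constant phase times a nonnegative function. Without this, Step 3 only gives uniqueness among nonnegative solutions, not $\mathcal{A}_\omega=\{e^{i\theta}\phi_\omega\,|\;\theta\in\R\}$.

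Your first suggestion is circular. Step 3 says nothing about complex critical points, so you do not know that $\phi$ is a minimizer, and $S_\omega(|\phi|)\le S_\omega(\phi)$ gives nothing without that. Nor can the convexity be run directly on $\phi=e^{i\theta}\sqrt{\rho}$: the kinetic term picks up $\int\rho|\nabla\theta|^2$, so convexity along $e^{i\theta}\sqrt{\rho_t}$ only compares $\phi$ with $e^{i\theta}|v|$, not with $|v|$.

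Your second suggestion presupposes $|\phi|>0$ away from the origin, which is part of what has to be proved. It would also require controlling the flux of $\rho^2\nabla\theta$ at the singular point.

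The missing idea is the one in Lemma~\ref{pos.sol}. The singular part of $\phi$ is a single complex number $c\neq0$ (Lemma~\ref{lemm8}), so after a phase rotation you may take $c>0$. Then $\psi=\operatorname{Im}\phi=\operatorname{Im}f$ has no singular part. By Lemma~\ref{lem:regofphi}, $\psi$ is continuous on $\R^N$, vanishes at infinity, and vanishes at $0$ because $f(0)=(\alpha+\beta(\lambda))c$ is real. It also solves $(\omega-\Delta+|\phi|^{p-1})\psi=0$ on $\R^N\setminus\{0\}$, so the maximum principle forces $\psi\equiv0$. Equivalently, testing the imaginary part of the weak equation against $\psi$ gives $\|\nabla\psi\|_{L^2}^2+\omega\|\psi\|_{L^2}^2+\int|\phi|^{p-1}\psi^2\,dx=0$. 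Then $\phi$ is real and tends to $+\infty$ at $0$, and the minimum principle gives $\phi>0$.

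\textbf{Second gap: Step 3 fails at $\omega=0$ in part of the range.} For $N=2$ and $p\ge3$ the solution $\phi_0$ is not in $L^2$ (Theorem~\ref{nthm:decay}). For $u\in X_0\setminus L^2$, both $\int\chi_\alpha^2|\nabla(u/\chi_\alpha)|^2$ and $\omega_\alpha\int|u|^2$ are infinite, because $|\nabla\log\chi_\alpha|\to\sqrt{\omega_\alpha}$ at infinity. So the ground-state identity cannot be extended by density, and your convex functional is not defined where $\phi_0$ lives. You would need a different positive weight, for instance the solution itself as in Benguria--Brezis--Lieb.

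For $\omega>0$ your route is plausible. It still needs the substitution identity across the singularity and the admissibility of test functions such as $(\phi_2^2-\phi_1^2)/\phi_1$.

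\textbf{How the paper proceeds instead.} The paper uses only that $\langle-\Delta_\alpha v,v\rangle\ge0$ whenever $\int v\chi_\alpha\,dx=0$ (Lemma~\ref{lem:Qnonnega}). This passes to $X_0$ by density with no $L^2$ requirement. It chooses $c>0$ with $c\phi_1-\phi_2\perp\chi_\alpha$ and reduces to the equality case of H\"older. This works uniformly for $\omega\in[0,\omega_\alpha)$.

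\textbf{Radiality and monotonicity.} Once positive solutions are unique, radiality follows at once from rotation invariance, which is simpler than the paper's argument through a radial minimizer. Strict decrease follows from the ODE $\phi''+\frac{N-1}{r}\phi'=\omega\phi+\phi^p>0$, which excludes interior local maxima and critical points. Step 1's rearrangement is therefore unnecessary. As stated, it also does not work: rearranging only the regular part neither rearranges $f+cG_\lambda$ nor preserves $\|u\|_{L^{p+1}}$.
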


\begin{theorem} \label{nthm:decay}
{Let $\phi_0$ be the positive radially symmetric solution of \eqref{SP} with $\omega = 0$ whose uniqueness and existence are guaranteed by Theorem~\ref{thm:uniq}.}  Then there exist $c, C>0$ such that 
\[  cr^{-\frac{2}{p-1}}
    \le\phi_0(r)
    \le Cr^{-\frac{2}{p-1}} \]
for all $r\ge 1$. In particular, when $N=2$, 
\[  \phi_0\in L^2(\R^2)
    \iff 1<p<3, \]
and when $N=3$, the solution $\phi_0$ belongs to $L^2(\R^3)$ for all $1<p<2$. 
\end{theorem}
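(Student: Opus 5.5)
Away from the origin, $\phi_0$ is a smooth positive radial solution of the ODE
\[
\phi'' + \frac{N-1}{r}\phi' = \phi^{p}, \qquad r>0 .
\]
This holds because $\phi_0 = f + qG_\lambda$ and, tested against $C^\infty_{\mathrm c}(\R^N\setminus\{0\})$, the equation $S_0'(\phi_0)=0$ reduces to $-\Delta\phi_0 + \phi_0^p = 0$ there. By Theorem~\ref{thm:uniq}, $\phi_0$ is positive and decreasing. Since $\phi_0 = f + qG_\lambda$ with $f\in L^{p+1}$ and $G_\lambda$ exponentially decaying, we also have $\phi_0\in L^{p+1}$ outside the unit ball, and monotonicity then forces $\phi_0(r)\to 0$ as $r\to\infty$. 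The whole proof is a comparison argument for this ODE on the exterior domain $\{|x|>1\}$. The model profile is
\[
W(r) = A r^{-2/(p-1)}, \qquad A^{p-1} = \frac{2}{p-1}\Bigl(\frac{2}{p-1}+2-N\Bigr).
\]
This $W$ solves $\Delta W = W^p$ whenever $A>0$. The condition $A>0$ amounts to $2/(p-1) > N-2$, which always holds for $N=2$ and holds for $N=3$ exactly because $p<3$ (a fortiori $p<2$).

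\emph{Upper bound.} I would use the Keller--Osserman type supersolution on the annulus $\{1<|x|<R\}$: set
\[
\overline{W}(x) = C|x|^{-2/(p-1)} + \varepsilon\bigl(\text{harmonic correction}\bigr),
\]
or more simply $\overline W = \max\bigl(\phi_0(1),A\bigr)\, r^{-2/(p-1)}$ with $C\ge A$. Then $\Delta\overline W = (C/A)\,\Delta W = (C/A)A^p r^{-2p/(p-1)} \le C^p r^{-2p/(p-1)} = \overline W^p$, so $\overline W$ is a supersolution. We have $\overline W(1)\ge\phi_0(1)$, and $\phi_0\to 0$ at infinity. The maximum principle for $-\Delta u + u^p$ (monotone in $u$) then gives $\phi_0\le\overline W$ on $|x|\ge1$. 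Concretely, on the set where $\phi_0>\overline W$ the difference is subharmonic, which contradicts its vanishing on the boundary and at infinity, since both functions tend to $0$ there.

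\emph{Lower bound.} Take $\underline W = c\,r^{-2/(p-1)}$ with $c\le\min(A,\phi_0(1))$. Then $\Delta\underline W = (c/A)A^p r^{-2p/(p-1)} \ge c^p r^{-2p/(p-1)}$ since $c\le A$, so $\underline W$ is a subsolution with $\underline W(1)\le\phi_0(1)$. The same comparison shows that $\underline W-\phi_0$ cannot have a positive interior maximum on $\{|x|>1\}$. This is the main obstacle, because both functions tend to $0$ at infinity and the maximum could escape to infinity. I would handle it by comparing on $(1,R)$ with $\underline W_R = \underline W - c R^{-2/(p-1)}$. This function is still a subsolution: its Laplacian is unchanged and $\underline W_R\le\underline W$, so $\underline W_R^p \le \underline W^p \le \Delta\underline W_R$ where $\underline W_R\ge0$. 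It also vanishes at $r=R$. The maximum principle on the bounded annulus then gives $\phi_0\ge\underline W_R$, and letting $R\to\infty$ yields the lower bound. The same truncation trick, adding $\varepsilon$ to $\overline W$, makes the upper bound rigorous as well.

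\emph{$L^2$ consequences.} From the two-sided bound,
\[
\int_{|x|>1}\phi_0^2 \sim \int_1^\infty r^{N-1-4/(p-1)}\,dr,
\]
which is finite iff $4/(p-1) > N$. For $N=2$ this means $p<3$. For $N=3$ it means $p<7/3$, which holds since $p<2$. Near the origin $\phi_0 = f + qG_\lambda$ is locally $L^2$ ($f\in L^{p+1}_{\mathrm{loc}}\subset L^2_{\mathrm{loc}}$, and $G_\lambda\in L^2$), so the stated equivalences follow.
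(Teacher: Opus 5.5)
Your proof is correct and is essentially the paper's argument. The paper proves a comparison lemma on $[1,\infty)$ between $u''+\frac{N-1}{r}u'=u^p$ and $v''+\frac{N-1}{r}v'=\varepsilon^{1-p}v^p$, and applies it to $\varepsilon\phi_0$ versus $l_{p,N}r^{-2/(p-1)}$ and to $\phi_0$ versus $\varepsilon l_{p,N}r^{-2/(p-1)}$; this is equivalent to your comparison of $\phi_0$ with the supersolution $Cr^{-2/(p-1)}$, $C\ge A$, and the subsolution $cr^{-2/(p-1)}$, $c\le A$. Your truncation $\underline W_R$ is valid but unnecessary: $\underline W-\phi_0$ is continuous, nonpositive at $r=1$ and tends to $0$ at infinity, so any positive supremum is attained at an interior point, which is exactly how the paper concludes.
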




\begin{theorem} \label{thm:stab1}
For $0<\omega<\omega_\alpha$, the standing wave $e^{i\omega t}\phi_\omega(x)$ is stable in $H_\alpha^1(\R^N)$. That is, for each $\varepsilon>0$ there exists $\delta>0$ such that any $H_\alpha^1$-solution $u(t)$ of \eqref{NLS} with $\|u(0)-\phi_\omega\|_{H_\alpha^1}<\delta$ satisfies
\[  \sup_{t\in\R}\inf_{\theta\in\R}\|u(t)-e^{i\theta}\phi_\omega\|_{H_\alpha^1}<\varepsilon. \]. 
\end{theorem}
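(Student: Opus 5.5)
We will follow the Cazenave--Lions variational approach, as Kaminaga--Ohta \cite{KO09} did in one dimension: we characterize $\phi_\omega$ as the unique (up to phase) global minimizer of $S_\omega$ on $H^1_\alpha(\R^N)$. Then we use conservation of energy and mass together with a compactness property of minimizing sequences.

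\textbf{Step 1 (variational characterization).} Set $m_\omega:=\inf\{S_\omega(u)\,|\;u\in H^1_\alpha(\R^N)\}$. Since $\omega<\omega_\alpha$, testing $S_\omega$ with $s\chi_\alpha$ for small $s>0$ gives
\[
S_\omega(s\chi_\alpha)=\tfrac{s^2}{2}(\omega-\omega_\alpha)+O(s^{p+1})<0,
\]
so $m_\omega<0$. Boundedness from below and coercivity come from splitting $u=a\chi_\alpha+w$ with $w\perp\chi_\alpha$. On $\chi_\alpha^\perp$ we have $\langle-\Delta_\alpha w,w\rangle\ge0$, and for $\omega>0$ the quadratic part $\langle(\omega-\Delta_\alpha)w,w\rangle$ controls $\|w\|_{H^1_\alpha}^2$ (up to an $L^2$ norm, which is fine since $\omega>0$). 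The negative term $-\tfrac12(\omega_\alpha-\omega)|a|^2$ is dominated by the $L^{p+1}$ term. Indeed, by H\"older on a large ball together with the exponential decay of $\chi_\alpha$, $|a|$ is controlled by $\|u\|_{L^{p+1}}+\|w\|_{H^1_\alpha}$, and $|a|^2\lesssim\|u\|_{L^{p+1}}^{2}+\ldots$ is beaten by $\|u\|_{L^{p+1}}^{p+1}$ for large norms. Hence minimizing sequences are bounded in $H^1_\alpha$. By Theorem~\ref{thm:uniq}, every minimizer is a nontrivial critical point, so it lies in $\mathcal{A}_\omega$. The set of minimizers is therefore $\{e^{i\theta}\phi_\omega\}$, once we know minimizers exist.

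\textbf{Step 2 (compactness of minimizing sequences; the main obstacle).} We must show that every minimizing sequence $(u_n)$ has a subsequence converging in $H^1_\alpha$ to some $e^{i\theta}\phi_\omega$. Write $u_n=f_n+c_nG_\lambda$. Boundedness gives $f_n\rightharpoonup f$ in $H^1$ and $c_n\to c$. The point interaction breaks translation invariance, but here this helps rather than hurts: the only mechanism that makes $S_\omega$ negative is the component along $\chi_\alpha$, which is localized. Away from the origin, $S_\omega$ restricted to a mass escaping to infinity behaves like the free defocusing functional, which is nonnegative. Concretely, write $u_n=u+r_n$ with $r_n\rightharpoonup0$ in $H^1_\alpha$. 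A Brezis--Lieb splitting gives
\[
S_\omega(u_n)=S_\omega(u)+S_\omega(r_n)+o(1).
\]
The singular part of $r_n$ tends to $0$, so $r_n$ is essentially in $H^1$. Weak convergence also forces $\langle r_n,\chi_\alpha\rangle\to0$, hence $\liminf S_\omega(r_n)\ge \liminf \tfrac12\|\nabla r_n\|^2+\tfrac{\omega}{2}\|r_n\|^2\ge0$. Therefore $S_\omega(u)\le m_\omega$, so $u$ is a minimizer, and the same splitting then forces $\|r_n\|_{H^1_\alpha}\to0$ (using $\omega>0$ to control $\|r_n\|_{L^2}$). Verifying the splitting of the quadratic form carefully in the $f+cG_\lambda$ decomposition is the delicate but routine part.

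\textbf{Step 3 (stability by contradiction).} Suppose stability fails. Then there are $\varepsilon_0>0$, initial data $u_n(0)\to\phi_\omega$ in $H^1_\alpha$, and times $t_n$ with $\inf_\theta\|u_n(t_n)-e^{i\theta}\phi_\omega\|_{H^1_\alpha}\ge\varepsilon_0$. By conservation of energy and mass, $S_\omega(u_n(t_n))=S_\omega(u_n(0))\to S_\omega(\phi_\omega)=m_\omega$. Thus $(u_n(t_n))$ is a minimizing sequence. By Step 2 it converges, after passing to a subsequence, to some $e^{i\theta}\phi_\omega$, which is a contradiction. Global existence of the $H^1_\alpha$-solutions follows from the same coercivity bound on $S_\omega$ combined with the conservation laws.
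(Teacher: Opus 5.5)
Your proof is correct. Your Step~3 is exactly the paper's proof of Theorem~\ref{thm:stab1}. The differences are in how you obtain the supporting facts, which the paper packages as Lemmas~\ref{lemm1}--\ref{lemm4}.

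\emph{Coercivity.} The paper stays in the decomposition $v=f+cG_\lambda$. It takes $\lambda$ large so that $\alpha+\beta(\lambda)+\lambda\|G_\lambda\|_{L^2}^2>0$, and absorbs the cross term $-\lambda\operatorname{Re}\overline{c}\lr{v,G_\lambda}$ by H\"older and Young. Your spectral splitting $u=a\chi_\alpha+w$ works equally well. However, the estimate you need is simply $|a|\le\|\chi_\alpha\|_{L^{(p+1)/p}}\|u\|_{L^{p+1}}$, with no ball decomposition. Keep $\|w\|_{H^1_\alpha}$ out of the bound on $|a|$: a term $C\|w\|_{H^1_\alpha}^2$ with large $C$ could not be absorbed by $\frac12\lr{(\omega-\Delta_\alpha)w,w}$.

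\emph{Compactness.} You use a Brezis--Lieb splitting $u_n=u+r_n$. This requires a.e.\ convergence, which you get from Rellich applied to $f_n$ together with $c_n\to c$. Then $c_n-c\to0$ reduces $S_\omega(r_n)$ to the free, nonnegative functional of the regular part. Write this with $\|\nabla(f_n-f)\|_{L^2}$ rather than $\|\nabla r_n\|_{L^2}$: the latter is infinite whenever $c_n\ne c$, because $\nabla G_\lambda\notin L^2$.

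The paper reaches the same conclusion more cheaply. Since $c_n\to c_0$ and $\lr{v_n,G_\lambda}\to\lr{v_0,G_\lambda}$, the terms of $S_\omega$ involving $c$ converge along the sequence, and the remaining terms are convex norms. Hence $S_\omega$ is weakly lower semicontinuous and the weak limit is a minimizer. Equality in the $\liminf$ then forces $\|\nabla f_n\|_{L^2}$, $\|v_n\|_{L^{p+1}}$ and $\|v_n\|_{L^2}$ to converge, which upgrades weak convergence to strong convergence.

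Your route is slightly heavier but exhibits the mechanism: mass escaping the origin carries nonnegative action. The paper's route is shorter and needs no pointwise convergence.
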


\begin{theorem} \label{thm:stab2}
The standing wave $\phi_0(x)$ is stable in the following sense: for each $\varepsilon>0$ there exists $\delta>0$ such that any $H_\alpha^1$-solution $u(t)$ of \eqref{NLS}  with $\|u(0)-\phi_0\|_{X_0}<\delta$ satisfies
\[  \sup_{t\in\R}\inf_{\theta\in\R}\|u(t)-e^{i\theta}\phi_0\|_{X_0}<\varepsilon. \]
Moreover, if $N=2$ and $1<p<3$ or $N=3$, i.e., $\phi_0\in L^2(\R^N)$, then $\phi_0$ is stable in $H_\alpha^1(\R^N)$.
\end{theorem}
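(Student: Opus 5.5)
We prove Theorem~\ref{thm:stab2} by the Cazenave--Lions/Kaminaga--Ohta scheme, using only the conservation of $E$. The key fact is that $\phi_0$ is, up to phase, the unique global minimizer of $S_0=E$ on $X_0$, and that minimizing sequences are precompact modulo phase. The mass is not needed at $\omega=0$.

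\textbf{Step 1: variational characterization.} Set $d_0:=\inf_{u\in X_0}S_0(u)$. This infimum is finite: write $u=f+qG_\lambda$ and control the negative part of $\langle-\Delta_\alpha u,u\rangle$, which only involves $q$ and $\langle u,G_\lambda\rangle$, by the $L^{p+1}$ term. For this we use H\"older with $G_\lambda\in L^{(p+1)'}$ and the fact that the quadratic form is coercive in $\nabla f$ and $q$ modulo that term.

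The infimum is negative, $d_0<0$. Indeed, $S_0(t\chi_\alpha)=\tfrac{t^2}{2}e_\alpha+O(t^{p+1})<0$ for small $t>0$.

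Any minimizer is a critical point, hence lies in $\mathcal{A}_0$. By Theorem~\ref{thm:uniq} it equals $e^{i\theta}\phi_0$, provided we know that minimizers can be taken positive. This follows because $S_0(|u|)\le S_0(u)$, and $|u|\in X_0$ has the same singular coefficient modulus. Hence the set of minimizers is exactly $\{e^{i\theta}\phi_0\}$.

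\textbf{Step 2: compactness of minimizing sequences.} This is the main obstacle. Take $u_n=f_n+q_nG_\lambda$ with $S_0(u_n)\to d_0$.

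First, $u_n$ is bounded in $X_0$, by the coercivity estimate of Step~1. Passing to a subsequence, $q_n\to q$ and $f_n\rightharpoonup f$ weakly in $L^{p+1}\cap\dot H^1$. Moreover $f_n\to f$ locally in $L^{p+1}$, since $\dot H^1$ gives local compactness and $p+1<2^*$ when $N=3$, $p<2$.

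The negative part of the form is $-\lambda\bigl(\overline{q}\langle u,G_\lambda\rangle+\mathrm{c.c.}\bigr)$, which involves $G_\lambda$ decaying exponentially. By the local convergence this part is weakly continuous. The remaining terms are weakly lower semicontinuous. Hence $S_0(u)\le d_0$, so $u$ is a minimizer. In particular $u\ne0$, because $d_0<0$.

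Finally, equality of the limits forces $\|\nabla f_n\|_{L^2}\to\|\nabla f\|_{L^2}$ and $\|u_n\|_{L^{p+1}}\to\|u\|_{L^{p+1}}$. Uniform convexity of $L^{p+1}$ and of $L^2$ then gives strong convergence $u_n\to u$ in $X_0$.

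\textbf{Step 3: stability in $X_0$.} Suppose stability fails. Then there are $H^1_\alpha$-solutions $u_n$, with $u_n(0)\to\phi_0$ in $X_0$, and times $t_n$ such that $\inf_\theta\|u_n(t_n)-e^{i\theta}\phi_0\|_{X_0}\ge\varepsilon$.

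The functional $E=S_0$ is continuous on $X_0$, so $E(u_n(0))\to d_0$. Since $H^1_\alpha\subset X_0$, energy conservation gives $E(u_n(t_n))=E(u_n(0))\to d_0$. Thus $u_n(t_n)$ is a minimizing sequence. By Step~2 it converges, up to a subsequence and a phase, to $\phi_0$ in $X_0$, which is a contradiction.

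\textbf{Step 4: stability in $H^1_\alpha$ when $\phi_0\in L^2$.} Assume $\phi_0\in L^2$, so that $\phi_0\in H^1_\alpha$. Take $u_n(0)\to\phi_0$ in $H^1_\alpha$. Since $H^1_\alpha$-convergence implies $X_0$-convergence, Step~3 gives $u_n(t_n)\to e^{i\theta_n}\phi_0$ in $X_0$.

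By mass conservation, $\|u_n(t_n)\|_{L^2}=\|u_n(0)\|_{L^2}\to\|\phi_0\|_{L^2}$. The $X_0$ convergence gives $f_n\to f$ locally and $q_n\to q$, so $u_n(t_n)\rightharpoonup e^{i\theta}\phi_0$ weakly in $L^2$. Together with convergence of the norms this yields strong convergence in $L^2$.

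Strong $L^2$ convergence plus convergence of $\nabla f_n$ in $L^2$ and of $q_n$ gives convergence in the $H^1_\alpha$ norm. This proves the second assertion of Theorem~\ref{thm:stab2}.
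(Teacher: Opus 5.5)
Your proposal is correct and follows essentially the paper's proof: you argue by contradiction using energy conservation, use precompactness in $X_0$ of minimizing sequences for $d(0)$ (Lemma~\ref{lemm3}), identify the limit via Theorem~\ref{thm:uniq}, and upgrade to $H^1_\alpha$ through mass conservation (your weak-$L^2$-limit plus norm-convergence argument spells out the step the paper states briefly). The positivity detour in Step~1 based on $S_0(|u|)\le S_0(u)$ is unnecessary, and as written it would not suffice anyway: Theorem~\ref{thm:uniq} already gives $\mathcal{A}_0=\{e^{i\theta}\phi_0\,|\;\theta\in\R\}$ for all nontrivial solutions, not only positive ones.
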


In our setting, a delta-type singular perturbation is {encoded} in the definition of the operator, and unlike in the one-dimensional case, it does not appear explicitly; this requires additional arguments. Furthermore, in the one-dimensional case, qualitative properties of standing waves were derived using explicit representations of solutions, whereas such representations are not available in higher dimensions. To overcome this difficulty, we adapt methods developed for defocusing equations with external potentials that are applicable in higher dimensions. {Adapting these techniques}, we establish key properties of standing waves, such as symmetry, positivity, and uniqueness, and then apply them to prove stability. 

{
The main novelty of our work lies in the structure of the associated functions. The decomposition of the quadratic form domain \eqref{eq.quadratic domain} into regular and singular components persists, leading to standing waves with a non-trivial singular part. However, these components cannot be analyzed independently; instead, one must carefully handle the competition between them throughout the analysis. A first manifestation of this is at the level of the energy functional, which features a positive contribution from the regular part and a negative one arising from the singular component.

\subsection{Organization of the paper}
The paper is organized as follows.
\begin{itemize}
    \item Section \ref{s.X0} treats the functional space $X_0$, proving Proposition \ref{p.X0}.
    \item Section \ref{s.existence} proves the existence of standing waves.
    \item Sections \ref{s.positivity}, \ref{s.uniqueness}, and \ref{s.radiality} establish positivity, uniqueness, and radiality of the standing waves.
    \item Section \ref{s.asymptotics} determines their {decay estimate.}
    \item Finally, Section \ref{s.stability} is devoted to stability.
\end{itemize}

}

\section{Laplacian with point interaction on the space $X_0$}\label{s.X0}
This Section is devoted to an appropriate description of how to extend the operator $\Delta_\alpha$ to the space $X_0$ defined in \eqref{eq:defX0}. Firstly, considering 
\begin{equation}\label{eq.1}
    u=f_\lambda+c G_\lambda= f_\mu+ c G_\mu,
\end{equation}
with 
\begin{equation}\label{eq.2}
    f_\mu=f_\lambda+c(G_\lambda-G_\mu),
\end{equation}
we see that $X_0$ does not depend on the choice of $\lambda$, because $G_\lambda-G_\mu\in H^1(\mathbb{R}^N)$. Moreover, we note    $f_\lambda\in L^{p+1}(\mathbb{R}^N)$, because $G_\lambda\in L^{p+1}(\mathbb{R}^N)$.

We can equip the space $X_0$ with the norms 
\begin{equation}
    \|u\|_{X_0,\lambda}=\|\nabla f_\lambda\|_{L^2}+|c|+\|u\|_{L^{p+1}}.
\end{equation}
From relation \eqref{eq.2}, we get 
\begin{equation}
    \|\nabla f_\mu\|_{L^2}\leq \|\nabla f_\lambda\|_{L^2}+|c|\|\nabla (G_\lambda-G_\mu)\|_{L^2},
\end{equation}
that provides the equivalence of the norms 
\begin{equation}
    \|u\|_{X_0,\lambda}\sim\|u\|_{X_0,\mu},
\end{equation}
so, from now on, we will omit the dependence of $\lambda$.
From the Sobolev embedding $H^1(\mathbb{R}^N)\hookrightarrow L^{p+1}(\mathbb{R}^N)$, we get the continuous dense embedding $H^1_\alpha(\mathbb{R}^N)\hookrightarrow X_0$: 
\begin{equation}
    \|u\|_{X_0}=\|\nabla f_\lambda\|_{L^2}+|c|+\|u\|_{L^{p+1}}\lesssim \|f_\lambda\|_{H^1}+|c|\sim\|u\|_{H^1_\alpha}.
\end{equation}
This is crucial because we will define the operator $\Delta_\alpha$ on $X_0$ extending a suitable formula from $H^1_\alpha(\mathbb{R}^N)$.
We consider the  functional defined on $X_0$ by 
\begin{align}\label{def.scalar product}
Q(u):
=  \|\nabla f_\mu\|_{L^2}^2  + (\a + \b(\mu)+\mu\|G_\mu\|_{L^2}^2) |c|^2 -2 \mu\operatorname{Re}  \overline{c}\langle u,G_\mu\rangle .
\end{align}
If $u\in H^1_\alpha$ we have 
\begin{equation}
   \begin{aligned}
       & Q(u)=\langle-\Delta_\alpha u,u\rangle &\forall u\in H^1_\alpha.
   \end{aligned}
\end{equation}
The quantity $\lr{v,G_\mu}$ is well-defined thanks to H\"older inequality, because $G_\mu\in L^{(p+1)^\prime}$, where $(p+1)^\prime=\frac{p+1}{p}<2$.
It is immediate to see that $Q$ is quadratic:
\begin{equation}
    Q(ku)=\|k\nabla f_\mu\|_{L^2}^2  + (\a + \b(\mu)+\mu\|G_\mu\|_{L^2}^2) |kc|^2 -2 \mu\operatorname{Re}  \overline{kc}\langle ku,G_\mu\rangle= |k|^2Q(u)
\end{equation}
for every $k\in \mathbb{C}$.
Before stating the next Proposition, we need the following identities
\begin{align*}
\langle G_\lambda,G_\mu\rangle = 
\begin{cases}
\di \frac{1}{2 \pi} \log \biggl(\frac{\sqrt{\l}}{\sqrt{\mu}}
\biggr)\frac{1}{\mu-\lambda},  & N=2,\\
\di \frac{1}{4 \pi(\sqrt{\l}+\sqrt{\mu})}, & N=3.
\end{cases}
\end{align*}
 By direct computation, the following equality can be verified:
\begin{equation} \label{rem.1}
    (\lambda-\mu)\langle G_\lambda,G_\mu\rangle+\beta(\mu)
    =\beta(\lambda).
\end{equation}

\begin{proposition}
   The  quadratic functional  $Q$ is well defined on $X_0$. It does not depend on the choice of $\mu$. Moreover it is continuous on $X_0$.
\end{proposition}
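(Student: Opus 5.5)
Three things are needed: every term of $Q(u)$ is finite for $u\in X_0$, the value of $Q(u)$ does not depend on $\mu$, and $Q$ is continuous for the norm $\|\cdot\|_{X_0}$.

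\emph{Well-definedness.} Write $u=f_\mu+cG_\mu$ with $f_\mu\in L^{p+1}\cap\dot H^1$ and $c\in\C$. Then $\|\nabla f_\mu\|_{L^2}$ is finite by definition. The constant $\alpha+\beta(\mu)+\mu\|G_\mu\|_{L^2}^2$ is finite because $G_\mu\in L^2$. The pairing $\langle u,G_\mu\rangle$ is finite by H\"older's inequality, since $u\in L^{p+1}$ and $G_\mu\in L^{(p+1)'}$. To check the last membership I will use the logarithmic singularity (in $N=2$) or the $|x|^{-1}$ singularity (in $N=3$) together with the exponential decay of $G_\mu$. For $N=3$ this requires $|x|^{-(p+1)/p}$ to be locally integrable, i.e. $(p+1)/p<3$, which holds for every $p>1$.

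\emph{Independence of $\mu$.} Fix $\lambda,\mu>0$ and relate the two decompositions through \eqref{eq.2}, $f_\mu=f_\lambda+c(G_\lambda-G_\mu)$. Expanding gives
\[
\|\nabla f_\mu\|_{L^2}^2=\|\nabla f_\lambda\|^2+2\operatorname{Re}\bar c\langle\nabla f_\lambda,\nabla(G_\lambda-G_\mu)\rangle+|c|^2\|\nabla(G_\lambda-G_\mu)\|^2 .
\]
For the cross term I use the distributional identity $-\Delta(G_\lambda-G_\mu)=\mu G_\mu-\lambda G_\lambda$, which follows from $(\lambda-\Delta)G_\lambda=\delta_0=(\mu-\Delta)G_\mu$. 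This gives
\[
\langle\nabla f_\lambda,\nabla(G_\lambda-G_\mu)\rangle=\langle f_\lambda,\mu G_\mu-\lambda G_\lambda\rangle .
\]
The identity must be justified for $f_\lambda\in L^{p+1}\cap\dot H^1$ rather than $H^1$. I do this by density: approximate $f_\lambda$ by $C_c^\infty$ functions in the $L^{p+1}\cap\dot H^1$ norm, and note that $G_\lambda-G_\mu\in H^1$ while $\mu G_\mu-\lambda G_\lambda\in L^{(p+1)'}$, so both sides pass to the limit.

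Next I rewrite $\langle f_\lambda,\,\cdot\,\rangle$ as $\langle u,\,\cdot\,\rangle-c\langle G_\lambda,\,\cdot\,\rangle$. After this substitution, all the $\operatorname{Re}\bar c\langle u,\cdot\rangle$ terms should combine into $-2\lambda\operatorname{Re}\bar c\langle u,G_\lambda\rangle$.

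The $|c|^2$ terms then reduce to explicit constants. These are $\|\nabla(G_\lambda-G_\mu)\|^2$, which I evaluate as $\langle G_\lambda-G_\mu,\mu G_\mu-\lambda G_\lambda\rangle$, together with $\|G_\lambda\|^2$, $\|G_\mu\|^2$ and $\langle G_\lambda,G_\mu\rangle$. I must show they sum to $\alpha+\beta(\lambda)+\lambda\|G_\lambda\|^2$, and I will close this step with identity \eqref{rem.1}.

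I expect this bookkeeping of the $|c|^2$ coefficients to be the main obstacle. The shortcut I will try first is to avoid the algebra altogether. For $u\in H^1_\alpha$ both expressions, $Q_\mu(u)$ and $Q_\lambda(u)$, equal $\langle-\Delta_\alpha u,u\rangle$ by the formula for the form. Their difference is a continuous function of $(f_\lambda,c)$ in the $X_0$ topology, by the continuity argument below. Since $H^1_\alpha$ is dense in $X_0$, the difference vanishes on all of $X_0$.

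\emph{Continuity.} I will show that $Q$ is the diagonal of the sesquilinear form
\[
B(u,v)=\langle\nabla f,\nabla g\rangle+\kappa c\bar d-\mu\bar d\langle u,G_\mu\rangle-\mu c\overline{\langle v,G_\mu\rangle},\qquad \kappa:=\alpha+\beta(\mu)+\mu\|G_\mu\|_{L^2}^2 .
\]
Using Cauchy--Schwarz and H\"older on each term gives $|B(u,v)|\lesssim\|u\|_{X_0}\|v\|_{X_0}$. Continuity then follows from $Q(u)-Q(v)=B(u,u-v)+B(u-v,v)$.
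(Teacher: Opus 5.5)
Your proposal is correct. Well-definedness and continuity are handled as in the paper, which simply calls continuity immediate. Your form $B$ is exactly the sesquilinear form $a(u,v)$ of \eqref{eq:a(u, v)} that the paper later obtains by polarization, with the same bound $|B(u,v)|\lesssim\|u\|_{X_0}\|v\|_{X_0}$. Your direct route to $\mu$-independence is also the paper's: expand $\|\nabla f_\mu\|_{L^2}^2$ via \eqref{eq.2}, integrate by parts against $-\Delta(G_\lambda-G_\mu)=\mu G_\mu-\lambda G_\lambda$, and match the $|c|^2$ coefficients through \eqref{rem.1}. Your approximation argument for that integration by parts, when $f_\lambda$ lies only in $L^{p+1}\cap\dot H^1$, supplies a step the paper leaves tacit.

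The genuinely different ingredient is the shortcut you prefer. $Q_\lambda-Q_\mu$ is continuous on $X_0$, since the norms for different parameters are equivalent. It vanishes on the dense subspace $H^1_\alpha$, with density coming from Theorem~\ref{thm:dense}. Hence it vanishes identically. This is valid and spares you the bookkeeping, but note what it presupposes. You need the representation $\langle(-\Delta_\alpha+\mu)u,u\rangle=\|\nabla f_\mu\|_{L^2}^2+\mu\|f_\mu\|_{L^2}^2+(\alpha+\beta(\mu))|c|^2$ on $H^1_\alpha$ for each parameter you compare, plus the short algebra turning it into $Q_\mu(u)$. The paper writes that representation only for parameters above $\omega_\alpha$. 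Moreover, its parameter-independence on $H^1_\alpha$ is itself equivalent to \eqref{rem.1}. So the shortcut moves the computation into the classical $H^1_\alpha$ setting rather than eliminating it.

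To make the shortcut self-contained for every $\mu>0$, verify the representation on $D(-\Delta_\alpha)$. There $\langle(\mu-\Delta_\alpha)u,u\rangle=\langle(\mu-\Delta)f_\mu,f_\mu+cG_\mu\rangle$, and $\langle(\mu-\Delta)f_\mu,cG_\mu\rangle=\overline{c}f_\mu(0)=(\alpha+\beta(\mu))|c|^2$. Then use that $D(-\Delta_\alpha)$ is dense in $X_0$, as the paper itself does in Lemma~\ref{lem:Qnonnega}. The paper's computation, in exchange, works on $X_0$ directly with no density argument at all.

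Finally, both you and the paper take for granted that $c$ is uniquely determined by $u$. This holds because $\nabla G_\mu\notin L^2$ near the origin, so $G_\mu\notin L^{p+1}\cap\dot H^1$.
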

\begin{proof}
We consider $u=f_\l+ cG_\l=f_\mu+c G_\mu$ as in \eqref{eq.1} and compute
    \begin{align*}
\|\nabla f_\mu\|_{L^2}^2&=\|\nabla f_\l+ c\nabla (G_\l-G_\mu)\|^2_{L^2}\\
&=\|\nabla f_\l\|_{L^2}^2+|c|^2\|\nabla (G_\l-G_\mu)\|^2_{L^2}+2\operatorname{Re} \overline{c}\langle\nabla f_\l,\nabla (G_\l-G_\mu)\rangle.
\end{align*}
Thanks to the identity $-\Delta(G_\l-G_\mu)=-\l G_\l+\mu G_\mu$ and integration by parts, we can simplify 
\begin{align*}
\|\nabla (G_\l-G_\mu)\|^2_{L^2}&=\langle\nabla (G_\l-G_\mu),\nabla (G_\l-G_\mu)\rangle\\
&=-\l\|G_\l\|_{L^2}^2+(\l+\mu)\langle G_\l, G_\mu\rangle- \mu\|G_\mu\|_{L^2}^2
\end{align*}
and 
\begin{align*}
    2\operatorname{Re} \overline{c}\langle \nabla f_\l,\nabla (G_\l-G_\mu)\rangle=&
    -2\l\operatorname{Re} \overline{c}\langle u, G_\l\rangle+ 2\l |c|^2 \|G_\l\|_{L^2}^2+ 2\mu\operatorname{Re} \overline{c}\langle u, G_\mu\rangle \\&-2\mu|c|^2\langle G_\lambda, G_\mu\rangle,
\end{align*}
so we have 
\begin{align}\label{eq. gradient norm}
    \|\nabla f_\mu\|_{L^2}^2=&\|\nabla f_\l\|_{L^2}^2+|c|^2(\l\|G_\l\|_{L^2}^2+(\l-\mu)\langle G_\l, G_\mu\rangle- \mu\|G_\mu\|_{L^2}^2)\\
    &-2\l\operatorname{Re} \overline{c}\langle u, G_\l\rangle+2\mu\operatorname{Re} \overline{c}\langle u, G_\mu\rangle.\nonumber
\end{align}
Substituting \eqref{eq. gradient norm} in \eqref{def.scalar product} and using \eqref{rem.1}
we see that
\begin{align*}
    Q(u) 
=&  \|\nabla f_\mu\|_{L^2}^2  + (\a + \b(\mu)+\mu\|G_\mu\|_{L^2}^2) |c|^2 -2 \mu\operatorname{Re}  \overline{c}\langle u,G_\mu\rangle \\
=&\|\nabla f_\l\|_{L^2}^2  + (\a + \b(\l)+\l\|G_\l\|_{L^2}^2) |c|^2 -2 \l\operatorname{Re}  \overline{c}\langle u,G_\l\rangle.
\end{align*}
The continuity is now immediate from the definitions.
\end{proof}
    
\begin{proposition}
    The quadratic functional $Q$ satisfies the parallelogram identity
    \begin{equation}
        Q(u+v)+Q(u-v)=2Q(u)+2Q(v)
    \end{equation}
    for every $u,v\in X_0$.
\end{proposition}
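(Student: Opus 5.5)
The plan is a direct computation from the definition \eqref{def.scalar product}. I fix one $\mu>0$ (admissible, since the previous Proposition shows $Q$ does not depend on $\mu$) and write $u=f_\mu+cG_\mu$ and $v=g_\mu+dG_\mu$. The decomposition $X_0\ni w\mapsto(f_\mu,c)$ is linear: if $f_\mu+cG_\mu=0$ with $f_\mu\in L^{p+1}\cap\dot H^1$, then $c=0$, because $G_\mu$ has a non-$\dot H^1$ singularity at the origin. Consequently
\[
u\pm v=(f_\mu\pm g_\mu)+(c\pm d)G_\mu .
\]
I will then verify the identity separately for each of the three terms of $Q$.

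\emph{Gradient term and $|c|^2$ term.} These are squared norms of the linear maps $w\mapsto \nabla f_\mu\in L^2$ and $w\mapsto c\in\mathbb{C}$ (the latter multiplied by the real constant $\alpha+\beta(\mu)+\mu\|G_\mu\|_{L^2}^2$). Each therefore satisfies the parallelogram law in a Hilbert space:
\[
\|a+b\|^2+\|a-b\|^2=2\|a\|^2+2\|b\|^2 .
\]

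\emph{Mixed term.} The remaining term is $-2\mu\operatorname{Re}\overline{c}\langle u,G_\mu\rangle$. I write it as $-2\mu\operatorname{Re}B(u,u)$, where
\[
B(u,v):=\overline{d}\,\langle u,G_\mu\rangle .
\]
Here $d$ is the singular coefficient of $v$, and $\langle u,G_\mu\rangle$ is well defined by H\"older's inequality. $B$ is linear in $u$ and conjugate-linear in $v$, i.e.\ sesquilinear. Expanding gives
\[
B(u\pm v,u\pm v)=B(u,u)+B(v,v)\pm\bigl(B(u,v)+B(v,u)\bigr).
\]
Summing the $+$ and $-$ cases cancels the cross terms and yields $2B(u,u)+2B(v,v)$. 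Taking real parts gives the parallelogram law for this term as well.

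Adding the three contributions proves the identity. The only point needing care is the linearity of the decomposition, i.e.\ the uniqueness of $(f_\mu,c)$, which follows from $G_\mu\notin\dot H^1$ near $0$ because of its logarithmic or $|x|^{-1}$ singularity. Everything else is routine algebra, so I expect no real obstacle.
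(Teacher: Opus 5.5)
Your proposal is correct and follows the paper's proof. The gradient and $|c|^2$ terms satisfy the parallelogram law because they are induced by inner products, and the mixed term $-2\mu\operatorname{Re}\overline{c}\langle u,G_\mu\rangle$ is verified by the same expansion the paper calls easy; your sesquilinear form $B$ simply packages that computation. Your explicit remark that the decomposition $w\mapsto(f_\mu,c)$ is unique, and hence linear, because $\nabla G_\mu\notin L^2$ near the origin, is a point the paper uses only implicitly.
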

\begin{proof}
    In the definition of $Q$, $\|\nabla f_\lambda\|_{L^2}^2$ and $|c|^2$ are induced by an inner product, so they satisfy the parallelogram identity. For this reason, we have  only to verify that 
    $$\operatorname{Re}(\overline{c}+\overline{d})\langle u+v,G_\lambda\rangle+\operatorname{Re}(\overline{c}-\overline{d})\langle u-v,G_\lambda\rangle=2\operatorname{Re}\overline{c}\langle u,G_\lambda\rangle+2\operatorname{Re}\overline{d}\langle v,G_\lambda\rangle.$$
    We can show easily that this holds for every $u=f_\lambda+cG_\lambda$ and $v=g_\lambda+dG_\lambda$ in $X_0$.
\end{proof}

 We can now define on $X_0$ by polarization formula
 \begin{equation}
     a(u,v)=\frac{1}{4}(Q(u+v)-Q(u-v)+iQ(u+iv)-iQ(u-iv)).
 \end{equation}

\begin{proof}[Proof of Proposition \ref{p.X0}]
The operator $a:X_0\times X_0\to\mathbb{C}$ is a symmetric bilinear form such that $a(v,v)=Q(v)$ and the proof is classical.
We can compute an explicit formula for $a$, calling $k(\lambda)=\a + \b(\l)+\l\|G_\l\|_{L^2}^2$. We have 
\begin{equation}
    \begin{aligned}
        Q(u+v)=& \|\nabla f_\lambda\|^2_{L^2}+\|\nabla g_\lambda\|^2_{L^2}+2\operatorname{Re} \langle\nabla f_\lambda,\nabla g _\l\rangle + k(\lambda) (|c|^2+|d|^2+2\operatorname{Re}c\overline{d})\\
        &-2\l\operatorname{Re}\overline{c}\langle u, G_\lambda\rangle -2\l\operatorname{Re}\overline{c}\langle v, G_\l\rangle- 2\l\operatorname{Re}\overline{d}\langle u, G_\l\rangle -2\l\operatorname{Re}\overline{d}\langle v, G_\lambda\rangle,\\
    \end{aligned}
\end{equation}
and the other terms are similar. This provides 
\begin{equation} \label{eq:a(u, v)}
    a(u,v)=\langle\nabla f_\lambda,\nabla g_\lambda\rangle + k(\lambda)c \overline{d} -\lambda\overline{d}\langle u, G_\lambda\rangle-\lambda c\overline{\langle v, G_\lambda\rangle}.
\end{equation}
 It is immediate to see that 
 $$|a(u,v)|\lesssim\|u\|_{X_0}\|v\|_{X_0}$$
so, for every $u\in X_0$, the functional $a(u,\cdot):X_0\to\mathbb{C}$ is linear and continuous, and we can write $$a(u,\cdot)\in X_0^*,$$ where $X_0^*$ denotes the dual space.
In particular, if $u$ belongs to the dense space $H^1_\alpha$, we have for construction 
\begin{equation}
    \begin{aligned}
        &a(u,\cdot)=\langle-\Delta_\alpha u,\cdot\rangle  &\forall u \in H^1_\alpha,
    \end{aligned}
\end{equation}
so, by density, we can extend the definition of $-\Delta_\alpha$ on the space $X_0$ 
\begin{equation}
    \begin{aligned}
        -\Delta_\alpha:&X_0 \to X_0^*\\
        &u\mapsto a(u,\cdot). 
    \end{aligned}
\end{equation}
\end{proof}

\section{Existence of solutions for \eqref{SP}}\label{s.existence}

We introduce the notation
\[  X_{\omega}^{\mathrm{reg}}
    :=\begin{cases}
    \hfil H^1(\R^N), 
   &\omega>0
\\  (L^{p+1}\cap \dot{H}^1)(\R^N), 
   & \omega=0,
   \end{cases}\]
and 
\[  X_\omega
    :=\{f+cG_\lambda\,|\;
    f\in X_\omega^{\mathrm{reg}},\: c\in \C\}.  \]
We define the least energy level $d(\omega)$ for $\omega\ge 0$ as follows:
\begin{align*}
    d(\omega) := \inf \{S_\omega(v) \,|\; v \in X_\omega\}. 
\end{align*}

\begin{lemma} \label{lemm1}
There exist $\lambda_0, \delta, C > 0$ such that for any $\lambda>\lambda_0$ and $v=f+cG_\lambda \in X_0$, 
\begin{equation}
    S_0(v)
    \ge \delta\bigl(\|\nabla f\|_{L^2}^2+|c|^2+\|v\|_{L^{p+1}}^{p+1}\bigr)
    -C. 
\end{equation}
In particular, $d(\omega)>-\infty$. 
\end{lemma}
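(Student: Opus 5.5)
The plan is to write $S_0(v)$ with the explicit formula \eqref{def.scalar product} at parameter $\lambda$:
\[
S_0(v)=\tfrac12\|\nabla f\|_{L^2}^2+\tfrac12\bigl(\alpha+\beta(\lambda)+\lambda\|G_\lambda\|_{L^2}^2\bigr)|c|^2-\lambda\operatorname{Re}\overline{c}\langle v,G_\lambda\rangle+\tfrac1{p+1}\|v\|_{L^{p+1}}^{p+1}.
\]
Only the cross term can be negative. We control it by H\"older's inequality,
\[
\lambda|c||\langle v,G_\lambda\rangle|\le \lambda|c|\,\|G_\lambda\|_{L^{(p+1)'}}\|v\|_{L^{p+1}} ,
\]
and then by Young's inequality with exponents $p+1$ and $(p+1)'$, splitting it as
\[
\varepsilon\|v\|_{L^{p+1}}^{p+1}+C_\varepsilon\bigl(\lambda|c|\|G_\lambda\|_{L^{(p+1)'}}\bigr)^{(p+1)/p}.
\]
Since $(p+1)/p<2$, a further Young inequality gives
\[
C_\varepsilon(\lambda\|G_\lambda\|_{L^{(p+1)'}})^{(p+1)/p}|c|^{(p+1)/p}\le \eta|c|^2+C(\varepsilon,\eta,\lambda).
\]
The $|c|^{(p+1)/p}$ growth is therefore harmless, and only the constant $C$ depends on $\lambda$.

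It remains to make the coefficient of $|c|^2$ positive, which is where the choice $\lambda>\lambda_0$ enters. The constant $k(\lambda)=\alpha+\beta(\lambda)+\lambda\|G_\lambda\|_{L^2}^2$ is computed from the scaling of $G_\lambda$.
\begin{itemize}
\item For $N=3$, $\lambda\|G_\lambda\|_{L^2}^2=\sqrt\lambda/(8\pi)$ and $\beta(\lambda)=\sqrt\lambda/(4\pi)$, so $k(\lambda)\to\infty$.
\item For $N=2$, $\lambda\|G_\lambda\|_{L^2}^2=1/(4\pi)$ and $\beta(\lambda)\sim\frac1{4\pi}\log\lambda\to\infty$, so again $k(\lambda)\to\infty$.
\end{itemize}
Hence there is $\lambda_0>\omega_\alpha$ with $k(\lambda)\ge 1$ for $\lambda>\lambda_0$; in fact $\alpha+\beta(\lambda)>0$ already suffices.

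Choosing $\varepsilon=\frac1{2(p+1)}$ and $\eta=\frac14$, we obtain
\[
S_0(v)\ge \tfrac12\|\nabla f\|^2+\tfrac14|c|^2+\tfrac1{2(p+1)}\|v\|_{L^{p+1}}^{p+1}-C .
\]
This gives the claim with $\delta=\min\{\tfrac14,\tfrac1{2(p+1)}\}$, for any fixed $\lambda>\lambda_0$ with $C$ depending on $\lambda$. Uniformity of $C$ in $\lambda$ is not needed, since one may fix a single $\lambda$.

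For $d(\omega)>-\infty$ with $\omega>0$, observe that $X_\omega=H^1_\alpha\subset X_0$ and $S_\omega=S_0+\frac\omega2\|\cdot\|_{L^2}^2\ge S_0$. Thus $d(\omega)\ge d(0)\ge -C$.

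The only delicate point is the sign analysis of $k(\lambda)$, i.e.\ that the negative cross term is of lower order ($|c|^{(p+1)/p}$ with $(p+1)/p<2$) and that the $|c|^2$ coefficient becomes positive for large $\lambda$. Both reduce to the explicit asymptotics of $\beta(\lambda)$ and $\|G_\lambda\|_{L^2}$.
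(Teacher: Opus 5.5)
Your proof is correct and follows the paper's argument: you use the explicit formula for $S_0$, H\"older plus Young to absorb the cross term $-\lambda\operatorname{Re}\overline{c}\langle v,G_\lambda\rangle$, and the values $\lambda\|G_\lambda\|_{L^2}^2=1/(4\pi)$ for $N=2$ and $\beta(\lambda)/2$ for $N=3$ to make the $|c|^2$ coefficient positive for large $\lambda$. The only difference is that the paper applies Young in the opposite order, first $\varepsilon|c|^2+C_{\lambda,\varepsilon}\|v\|_{L^{p+1}}^2$ and then $\|v\|_{L^{p+1}}^2\le\varepsilon\|v\|_{L^{p+1}}^{p+1}+C$. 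Your remark that $C$ depends on $\lambda$ matches the paper's constant $C_{\lambda,\varepsilon}$, and a $\lambda$-uniform $C$ with fixed $\delta$ is in fact impossible, since for fixed $v=cG_\mu$ one has $\|\nabla f_\lambda\|_{L^2}\to\infty$ as $\lambda\to\infty$.
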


\begin{proof}
For $v=f+cG_\lambda\in X_0$, the explicit expression of $S_0$ is
\[  S_0(v)
    =\frac{1}{2}\|\nabla f\|_{L^2}^2
    +\frac{1}{2}\bigl(\a+\b(\lambda)+\lambda\|G_\lambda\|_{L^2}^2\bigr)|c|^2
    -\lambda\operatorname{Re}\overline{c}\langle v,G_\lambda\rangle
    +\frac{1}{p+1}\|v\|_{L^{p+1}}^{p+1}.  \]
For any $\varepsilon>0$, we estimate as follows:
\begin{align*}
    -\lambda\operatorname{Re}\overline{c}\langle v,G_\lambda\rangle
   &\ge -\lambda|c|\|v\|_{L^{p+1}}\|G_\lambda\|_{L^{(p+1)/p}}
\\ &= -\varepsilon|c|^2
    -C_{\lambda,\varepsilon}\|v\|_{L^{p+1}}^2
\\ &\ge -\varepsilon|c|^2
    -\varepsilon\|v\|_{L^{p+1}}^{p+1}
    -C_{\lambda,\varepsilon}.
\end{align*}

When $N=2$, since $\lambda\|G_\lambda\|_{L^2}^2=1/(4\pi)$, it suffices to take $\lambda > 0$ large so that $\alpha+\beta(\lambda)+1/(4\pi)>0$
and $\varepsilon>0$ small so that $1/(p+1)-\varepsilon>0$ and $\alpha+\beta(\lambda)+1/(4\pi)-2\varepsilon>0$. Then we have
\begin{align*}
    S_0(v)
   &\ge \frac{1}{2}\|\nabla f_\lambda\|_{L^2}^2
    +\frac{1}{2}\Bigl(\a+\b(\lambda)+\frac{1}{4\pi}-2\varepsilon\Bigr)|c|^2
    +\Bigl(\frac{1}{p+1}-\varepsilon\Bigr)\|v\|_{L^{p+1}}^{p+1}
    -C_{\lambda,\varepsilon}
\\ &\ge -C_{\lambda,\varepsilon}.
\end{align*}

When $N=3$, since $\lambda\|G_\lambda\|_{L^2}^2=\beta(\lambda)/2$, it suffices to take $\lambda>0$ large so that
$\alpha+3\beta(\lambda)/2>0$ and
$\varepsilon>0$ small so that $1/(p+1)-\varepsilon>0$ and $\alpha+3\beta(\lambda)/2-2\varepsilon>0$. Then we have
\begin{align*}
    S_0(v)
   &\ge \frac{1}{2}\|\nabla f_\lambda\|_{L^2}^2
    +\frac{1}{2}\Bigl(\a+\frac{3}{2}\b(\lambda)-2\varepsilon\Bigr)|c|^2
    +\Bigl(\frac{1}{p+1}-\varepsilon\Bigr)\|v\|_{L^{p+1}}^{p+1}
    -C_{\lambda,\varepsilon}
\\ &\ge -C_{\lambda,\varepsilon}.
\end{align*}

This completes the proof.
\end{proof}





\begin{lemma} \label{lemm2}
For $0 \le \omega < \omega_\alpha$, it follows that $d(\omega) < 0$. 
\end{lemma}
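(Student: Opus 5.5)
The plan is to test $S_\omega$ on a small multiple of the eigenfunction $\chi_\alpha$ from \eqref{eq:chial}. Since $\chi_\alpha = G_{\omega_\alpha}/\|G_{\omega_\alpha}\|_{L^2}$, its decomposition with $\lambda=\omega_\alpha$ is $\chi_\alpha = 0 + c\,G_{\omega_\alpha}$, with regular part $f=0$. Hence $\chi_\alpha\in H^1_\alpha(\R^N)$. It also lies in $X_0$, because $H^1_\alpha\hookrightarrow X_0$ as shown in Section~\ref{s.X0}. So for every $0\le\omega<\omega_\alpha$ we have $\chi_\alpha\in X_\omega$, and $s\chi_\alpha$ is an admissible competitor for $d(\omega)$ for every $s>0$.

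Next compute $S_\omega(s\chi_\alpha)$. For $\omega>0$ we use $\langle-\Delta_\alpha\chi_\alpha,\chi_\alpha\rangle = e_\alpha\|\chi_\alpha\|_{L^2}^2=-\omega_\alpha$, since $\chi_\alpha\in D(-\Delta_\alpha)$ is a normalized eigenfunction. For $\omega=0$ we use Proposition~\ref{p.X0}, which says the extended form agrees with the original one on $H^1_\alpha$, so the same value $-\omega_\alpha$ holds. In both cases
\[
S_\omega(s\chi_\alpha)=\frac{s^2}{2}(\omega-\omega_\alpha)+\frac{s^{p+1}}{p+1}\|\chi_\alpha\|_{L^{p+1}}^{p+1}.
\]
The norm $\|\chi_\alpha\|_{L^{p+1}}$ is finite because $G_\lambda\in L^{p+1}$. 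In dimension $N=3$ this needs $p+1<3$, which is exactly the restriction $p<2$ in \eqref{eq:cond-p}; in dimension $N=2$ the singularity is only logarithmic, so any $p$ works.

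Since $\omega-\omega_\alpha<0$ and $p+1>2$, the quadratic term dominates for small $s>0$, so $S_\omega(s\chi_\alpha)<0$ for $s$ small. Therefore $d(\omega)\le S_\omega(s\chi_\alpha)<0$.

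There is no real obstacle in this argument. The only point needing care is the case $\omega=0$, where $S_0$ must be evaluated through the extended form on $X_0$. There one verifies via Proposition~\ref{p.X0} that the bilinear form coincides with $\langle-\Delta_\alpha\cdot,\cdot\rangle$ on $H^1_\alpha$. Alternatively, one can substitute $f=0$, $c=1/\|G_{\omega_\alpha}\|_{L^2}$ and $\lambda=\omega_\alpha$ into \eqref{eq:a(u, v)qua}. Using $\alpha+\beta(\omega_\alpha)=0$, this gives $(\omega_\alpha\|G\|^2 - 2\omega_\alpha\|G\|^2)|c|^2=-\omega_\alpha$, as required.
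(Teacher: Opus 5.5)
Your proposal is correct and is essentially the paper's proof: the paper also evaluates $S_\omega$ on small multiples $c\chi_\alpha$ of the normalized eigenfunction, obtaining $S_\omega(c\chi_\alpha)=c^2\bigl(-\tfrac12(\omega_\alpha-\omega)+\tfrac{c^{p-1}}{p+1}\|\chi_\alpha\|_{L^{p+1}}^{p+1}\bigr)<0$ for small $c>0$. The paper leaves implicit several checks that you make explicit: that $\chi_\alpha\in X_\omega$, that $\|\chi_\alpha\|_{L^{p+1}}<\infty$ under \eqref{eq:cond-p}, and that the extended form on $X_0$ still gives $-\omega_\alpha$ when $\omega=0$.
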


\begin{proof}
Let $\chi_\alpha$ be the normalized eigenfunction of $-\Delta_\alpha$ given by \eqref{eq:chial}. For $c>0$,
\begin{align*}
    S_\omega(c\chi_\alpha)
    =c^2\Bigl(-\frac{1}{2}(\omega_\alpha-\omega)
    + \frac{c^{p-1}}{p+1}\|\chi_\alpha\|_{L^{p+1}}^{p+1}\Bigr).
\end{align*}
Since $\omega_\alpha-\omega>0$, we have $S_\omega(c\chi_\alpha)<0$ for small $c>0$. This implies $d(\omega)<0$.
\end{proof}

\begin{lemma} \label{lemm3}
Let $0 \le \omega < \omega_\alpha$.  If $\{v_n\} \subset X_\omega$ is a minimizing sequence for $d(\omega)$, namely, $S_\omega(v_n) \to d(\omega)$, then up to a subsequence, there exists a minimizer $v_0 \in X_\omega$ for $d(\omega)$ such that $v_n\to v_0$ in $X_\omega$. 
\end{lemma}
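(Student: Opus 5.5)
We plan to combine the coercivity in Lemma~\ref{lemm1} (and its analogue for $\omega>0$) with a compactness argument that exploits the rigidity of the singular part. Fix $\lambda>\lambda_0$ large and write $v_n=f_n+c_nG_\lambda$. For $\omega>0$ the extra term $\frac{\omega}{2}\|v_n\|_{L^2}^2$ only helps. Expanding it in $f_n$ and $c_n$, the cross term $\omega\operatorname{Re}\overline{c_n}\langle f_n,G_\lambda\rangle$ is absorbed by $\varepsilon\|f_n\|_{L^2}^2+C_\varepsilon|c_n|^2$, and $|c_n|$ is already controlled. Hence $\{f_n\}$ is bounded in $X_\omega^{\mathrm{reg}}$ and $\{c_n\}$ is bounded in $\C$. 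Passing to a subsequence, $c_n\to c_0$, $f_n\rightharpoonup f_0$ weakly (in $H^1$, respectively $\nabla f_n\rightharpoonup\nabla f_0$ in $L^2$ and $f_n\rightharpoonup f_0$ in $L^{p+1}$), and $f_n\to f_0$ a.e. by Rellich on balls. Set $v_0=f_0+c_0G_\lambda$.

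\textbf{Lower semicontinuity.} The functional splits as
\[
S_\omega(v)=\tfrac12\|\nabla f\|_{L^2}^2+\tfrac{\omega}{2}\|f\|_{L^2}^2+\tfrac1{p+1}\|v\|_{L^{p+1}}^{p+1}+R(v).
\]
Here $R$ collects the terms involving $c$ (for $\omega=0$ we omit the $\|f\|_{L^2}$ term). The terms in $R$ are $|c|^2$-terms and the pairings $\langle v,G_\lambda\rangle$ or $\langle f,G_\lambda\rangle$, all of which converge. Indeed, $G_\lambda\in L^{(p+1)'}\cap L^2$ is a fixed function, so weak convergence gives convergence of these pairings. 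The remaining terms are weakly lower semicontinuous (use Fatou for the $L^{p+1}$ term). Therefore $S_\omega(v_0)\le\liminf S_\omega(v_n)=d(\omega)$, and $v_0$ is a minimizer.

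\textbf{Strong convergence.} Equality must hold, so each lower semicontinuous piece converges to its limit: $\|\nabla f_n\|_{L^2}\to\|\nabla f_0\|_{L^2}$, $\|v_n\|_{L^{p+1}}\to\|v_0\|_{L^{p+1}}$, and $\|f_n\|_{L^2}\to\|f_0\|_{L^2}$ when $\omega>0$. This follows because the liminf of a sum of lower semicontinuous pieces equals the sum of their limits, applied along a further subsequence. Weak convergence together with convergence of norms in the uniformly convex spaces $L^2$ and $L^{p+1}$ gives strong convergence: $\nabla f_n\to\nabla f_0$ in $L^2$, $v_n\to v_0$ in $L^{p+1}$, $f_n\to f_0$ in $L^2$, and $c_n\to c_0$. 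This is exactly $v_n\to v_0$ in $X_\omega$.

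\textbf{Main obstacle.} The delicate point is the absence of translation invariance and the possibility of mass escaping to infinity. In our approach this is bypassed, since nothing requires ruling out vanishing: the negative part of the functional lives only in the fixed-profile terms involving $G_\lambda$, which are weakly continuous. Care is needed in two places. First, the pairing $\langle v_n,G_\lambda\rangle$ requires weak convergence of $v_n$ in $L^{p+1}$; we get this from boundedness of $\|v_n\|_{L^{p+1}}$ together with a.e.\ convergence. Second, for $\omega=0$ we must identify the weak limit of $f_n$ in $L^{p+1}\cap\dot H^1$ consistently with the a.e.\ limit. Finally, Lemma~\ref{lemm2} shows that $d(\omega)<0$, so that $v_0\neq0$, as needed later.
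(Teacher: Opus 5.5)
Your proposal is correct and follows essentially the same route as the paper: coercivity from Lemma~\ref{lemm1} gives boundedness of $f_n$ and $c_n$, weak compactness plus weak lower semicontinuity (with the $|c|^2$ terms and $G_\lambda$-pairings converging) shows the weak limit is a minimizer, and equality in the liminf forces norm convergence and hence strong convergence in $X_\omega$. You are more explicit than the paper in two places: why $\langle v_n,G_\lambda\rangle$ converges, and why each lower semicontinuous piece converges separately. Both are correct.
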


\begin{proof}
From Lemma \ref{lemm1}, $S_\omega$ is bounded from below on $X_\omega$. Let $\{v_n\} \subset X_\omega$ be a minimizing sequence for $d(\omega)$, namely, $S_\omega(v_n) \to d(\omega)$, and decompose $v_n=f_\lambda+cG_\lambda$. By Lemma \ref{lemm1}, $\{v_n\}$ are bounded in $X_\omega$ for $\lambda$ sufficiently large. In particular, $f_{\lambda,n}$ are bounded in $X_\omega^{\mathrm{reg}}$ and $\{c_n\}$ is also bounded. Then, up to a subsequence, there exist 
$v_0=f_{\lambda, 0}+c_0G_\lambda\in X_\omega$ such that $v_n\rightharpoonup v_0$ weakly in $X_\omega$, i.e.,
\begin{align*}
    f_{\lambda,n} &\rightharpoonup f_{\lambda,0}\ \mathrm{weakly\ in}\ X_\omega^{\mathrm{reg}},&
    c_n &\to c_0.
\end{align*}
By the weak lower semi continuity of the norms, we have
\begin{equation}\label{eq:domlimit}
\begin{aligned}
    d({\omega}) 
   &= \liminf_{n \to \infty} S_{\omega}(v_n)
\\ &= \liminf_{n \to \infty}
    \begin{multlined}[t]
    \Bigl[\frac{1}{2} \|\nabla f_{\lambda,n}\|_{L^2}^2  +\frac{1}{2} (\a + \b(\lambda)+\lambda\|G_\lambda\|_{L^2}^2) |c_n|^2 -\lambda\operatorname{Re}  \overline{c_n}\langle v_n, G_\lambda\rangle
\\  +\frac{1}{p+1} \|v_n\|_{L^{p+1}}^{p+1}+ \frac{\omega}{2} \|v_n\|_{L^2}^2\Bigr]
    \end{multlined}
\\ &\ge\begin{multlined}[t] \frac{1}{2} \|\nabla f_{\lambda,0}\|_{L^2}^2  
    +\frac{1}{2}(\a + \b(\lambda)+\lambda\|G_\lambda\|_{L^2}^2) |c_0|^2 
    -\lambda\operatorname{Re}\overline{c_0}\langle v_0, G_\lambda\rangle
\\  +\frac{1}{p+1} \|v_0\|_{L^{p+1}}^{p+1}+ \frac{\omega}{2} \|v_0\|_{L^2}^2
    \end{multlined}
\\ &= S_\omega(v_0) \ge d(\omega).
\end{aligned}
\end{equation}
Therefore, $v_0$ is a minimizer for $d(\omega)$. Since the inequality in \eqref{eq:domlimit} is actually an equality, we have $\|\nabla f_n\|_{L^2}\to \|\nabla f_0\|_{L^2}$ and $\|v_n\|_{L^{p+1}}\to \|v_0\|_{L^{p+1}}$, and, when $\omega>0$, $\|v_n\|_{L^2}\to \|v_0\|_{L^2}$. Together with $c_n\to c_0$, we obtain $\|f_n\|_{X_\omega^{\mathrm{reg}}}\to \|f_0\|_{X_\omega^{\mathrm{reg}}}$. Since weak convergence and convergence of norms imply strong convergence in $L^q(\R^N)$ for $1<q<\infty$ (see \cite[Theorem~2.11]{LL01}), we have $f_n \to f_0$ in $X_\omega^{\mathrm{reg}}$, and hence $v_n\to v_0$ in $X_\omega$.
\end{proof}

We define the set of all minimizers for $d(\omega)$ by
\begin{equation}
    \mathcal{M}_\omega 
    :=\{v\in X_\omega\,|\;
    S_\omega(v)=d(\omega)\}. 
\end{equation}
Note that Lemma~\ref{lemm3} implies $\mathcal{M}_\omega\ne\emptyset$.

\begin{lemma} \label{lem:cne0}
If $v_0=f_0+c_0G_\lambda\in \mathcal{M}_\omega$, then $c_0\ne 0$. 
\end{lemma}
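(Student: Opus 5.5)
We argue by contradiction and compare with the free defocusing problem. Suppose $v_0=f_0+c_0G_\lambda\in\mathcal{M}_\omega$ with $c_0=0$, so $v_0=f_0\in X_\omega^{\mathrm{reg}}$ has no singular part. On such functions the quadratic form coincides with the free Dirichlet form, with no contribution from the point interaction. This can be read off from \eqref{eq:a(u, v)qua}: for $u=f+0\cdot G_\lambda$ the terms containing $c$ vanish, so $\langle -\Delta_\alpha f,f\rangle=\|\nabla f\|_{L^2}^2$. Consequently
\[
  S_\omega(v_0)=\frac12\|\nabla f_0\|_{L^2}^2+\frac{\omega}{2}\|f_0\|_{L^2}^2+\frac{1}{p+1}\|f_0\|_{L^{p+1}}^{p+1}\ \ge 0,
\]
where for $\omega=0$ the $L^2$ term is simply absent.

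On the other hand, Lemma~\ref{lemm2} gives $d(\omega)<0$. Since $v_0$ is a minimizer, $S_\omega(v_0)=d(\omega)<0$, which contradicts the nonnegativity above. Hence $c_0\neq0$.

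The one point needing care is that the vanishing of the singular coefficient does not depend on $\lambda$. By \eqref{eq.1}--\eqref{eq.2}, the decomposition $u=f_\lambda+cG_\lambda=f_\mu+cG_\mu$ carries the same coefficient $c$ for every $\lambda$, so $c_0=0$ is an intrinsic property of $v_0$. The identity for $Q$ when $c=0$ is then immediate from \eqref{def.scalar product}. I expect no genuine obstacle here: the argument rests entirely on the strict negativity of $d(\omega)$ from Lemma~\ref{lemm2}, which in turn uses $\omega<\omega_\alpha$.
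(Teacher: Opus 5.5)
Your proposal is correct and is the same argument as the paper's. If $c_0=0$, every term of $S_\omega(v_0)$ involving the singular coefficient vanishes, so $S_\omega(v_0)\ge 0$, which contradicts $S_\omega(v_0)=d(\omega)<0$ from Lemma~\ref{lemm2}. The paper phrases this directly, as positivity of the $c_0$-dependent terms, rather than by contradiction.
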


\begin{proof}
Let $v_0 = f_{\l,0} + c_0 G_\l \in X_\omega\in\mathcal{M}_\omega$. Then since $S_\omega(v_0)=d(\omega)<0$, we have
\begin{align*}
    \lambda\operatorname{Re}\overline{c_0}\langle v_0, G_\lambda\rangle
    -\frac{1}{2}(\a + \b(\lambda)+\lambda\|G_\lambda\|_{L^2}^2) |c_0|^2 
    >0.
\end{align*}
Therefore $c_0 \ne 0$.
\end{proof}

\begin{lemma} \label{lemm4}
Let $0 \le \omega < \omega_\alpha$. Then $\mathcal{M}_\omega\subset \mathcal{A}_\omega$. 
\end{lemma}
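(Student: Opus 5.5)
The plan is to show that every minimizer is a critical point of $S_\omega$ on $X_\omega$ and is nontrivial. Take $v_0\in\mathcal{M}_\omega$.

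\emph{Nontriviality.} By Lemma~\ref{lemm2}, $S_\omega(v_0)=d(\omega)<0=S_\omega(0)$, so $v_0\neq0$. Lemma~\ref{lem:cne0} even gives that the singular coefficient $c_0$ is nonzero.

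\emph{Criticality.} Since $v_0$ is a global minimizer over the whole linear space $X_\omega$ and no constraint is imposed, it suffices to prove that $S_\omega$ is G\^ateaux differentiable at $v_0$ in every direction $w\in X_\omega$. Then $t\mapsto S_\omega(v_0+tw)$ attains its minimum at $t=0$, so its derivative vanishes there. I will compute this derivative term by term.

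\begin{itemize}
\item \emph{Quadratic part.} By Proposition~\ref{p.X0} (or the definition of $\langle\cdot,\cdot\rangle$ on $H_\alpha^1$ when $\omega>0$), it is a continuous Hermitian form. Its real directional derivative is $\operatorname{Re}\langle -\Delta_\alpha v_0,w\rangle$.
\item \emph{Mass term.} For $\omega>0$, the term $\frac{\omega}{2}\|v\|_{L^2}^2$ is smooth on $X_\omega\subset L^2$.
\item \emph{Nonlinear term.} Since $X_\omega\hookrightarrow L^{p+1}$ continuously, the map $v\mapsto\frac1{p+1}\|v\|_{L^{p+1}}^{p+1}$ is $C^1$ on $L^{p+1}$. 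Its derivative is $\operatorname{Re}\int|v|^{p-1}v\overline{w}$, which is well defined because $|v|^{p-1}v\in L^{(p+1)/p}$.
\end{itemize}

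For $\omega=0$, the continuity of the embedding $X_0\hookrightarrow L^{p+1}$ is built into the norm $\|u\|_{X_0}$. For $\omega>0$ it follows from the Sobolev embedding and $G_\lambda\in L^{p+1}$, using the constraint \eqref{eq:cond-p} when $N=3$.

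Putting the terms together gives
\[
\operatorname{Re}\Bigl(\langle-\Delta_\alpha v_0,w\rangle+\omega\langle v_0,w\rangle+\int|v_0|^{p-1}v_0\overline{w}\,dx\Bigr)=0\quad\text{for all } w\in X_\omega.
\]
Replacing $w$ by $iw$ kills the real part and yields the full complex identity. Hence $S_\omega'(v_0)=0$ in $X_\omega^*$. This is exactly the defining condition of $\mathcal{A}_\omega$: for $\omega>0$ we have $X_\omega=H^1_\alpha$, and for $\omega=0$ we have $X_0$.

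\emph{Main obstacle.} The delicate point is differentiability of the quadratic part when $\omega=0$. The form there is not the original operator form but the extension $a(\cdot,\cdot)$ built in Section~\ref{s.X0}. I will use the explicit formula \eqref{eq:a(u, v)} together with the bound $|a(u,v)|\lesssim\|u\|_{X_0}\|v\|_{X_0}$. These give
\[
Q(v_0+tw)=Q(v_0)+2t\operatorname{Re}a(v_0,w)+t^2Q(w),
\]
so the derivative at $t=0$ is $2\operatorname{Re}a(v_0,w)$. Everything else is routine.
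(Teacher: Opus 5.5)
Your proposal is correct and follows the same argument as the paper. The paper also argues that a minimizer over the whole linear space $X_\omega$ satisfies $\tfrac{d}{dt}S_\omega(v_0+tw)|_{t=0}=0$ for every $w\in X_\omega$, hence $S_\omega'(v_0)=0$, and obtains $v_0\neq 0$ from Lemma~\ref{lem:cne0} (equivalently, from $d(\omega)<0$ in Lemma~\ref{lemm2}); your checks of differentiability of the extended form $a$ and of the $L^{p+1}$ term, and the $w\mapsto iw$ step, simply fill in details the paper leaves implicit.
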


\begin{proof}
Let $v_0 \in \mathcal{M}_\omega$. Since $v_0$ is a minimizer for $d(\omega)$, it follows that $\langle S_\omega'(v_0), w\rangle=(d/dt) S_\omega(v_0 + t w)|_{t=0} = 0$ for any $w\in X_\omega$, namely, $S_\omega'(v_0) = 0$. Moreover, from Lemma \ref{lem:cne0}, we have $v_0\ne 0$. Thus, $v_0 \in \mathcal{A}_\omega$. 
\end{proof}

\begin{lemma} \label{lemm8}
Let $\phi = f + c G_\lambda \in \mathcal{A}_\omega$. Then $c \neq 0$. 
\end{lemma}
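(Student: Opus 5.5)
Suppose, for contradiction, that $c=0$, so $\phi=f\in X_\omega^{\mathrm{reg}}$ is a nontrivial critical point of $S_\omega$. The plan is to test $S_\omega'(\phi)=0$ against two different directions. The first is regular directions $w=g\in C_c^\infty(\R^N)$ (so $d=0$). The second is the singular direction $w=G_\lambda$. The aim is to show that $\phi$ must then be a nonnegative-energy solution of the free defocusing equation, and that it must vanish.

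\emph{Step 1 (the equation away from the singular part).} With $c=0$ and $d=0$, formula \eqref{eq:a(u, v)} gives $\langle -\Delta_\alpha \phi, g\rangle=\langle \nabla f,\nabla g\rangle$. Hence $\langle S_\omega'(\phi),g\rangle=0$ says
\[ -\Delta f+\omega f+|f|^{p-1}f=0 \quad\text{in } \mathcal{D}'(\R^N). \]
Testing with $g=f$ is legitimate by density of $C_c^\infty$ in $X_\omega^{\mathrm{reg}}$; each term is continuous there, since the nonlinear term pairs $L^{(p+1)/p}$ with $L^{p+1}$. This gives
\[ \|\nabla f\|_{L^2}^2+\omega\|f\|_{L^2}^2+\|f\|_{L^{p+1}}^{p+1}=0 \]
(the $L^2$ term is absent when $\omega=0$). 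Since $\omega\ge 0$, every term is nonnegative, so $\|f\|_{L^{p+1}}=0$. Thus $f=0$, and hence $\phi=0$, contradicting $\phi\in\mathcal{A}_\omega$.

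The only point requiring care is the density and continuity justification in the case $\omega=0$. There one must check that $C_c^\infty$ is dense in $(L^{p+1}\cap\dot H^1)(\R^N)$. The plan is to use standard truncation and mollification: multiply by cutoffs $\eta(x/R)$. The error $\|f\nabla\eta_R\|_{L^2}$ is controlled by Hölder on the annulus, using $f\in L^{p+1}$ together with $p+1\le 2^*$ when $N=3$, or a Sobolev/Hölder estimate when $N=2$. If this density argument becomes delicate, I would instead test directly with $\eta_R^2 f$ and pass $R\to\infty$. The cross term $\int \eta_R f\nabla\eta_R\cdot\nabla f$ vanishes in the limit by the same annulus estimate.

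One subtlety remains: the identity above already forces $f=0$ with no use of the singular direction, so the argument needs no further input. As a sanity check, one can also test with $w=G_\lambda$. This yields $-\lambda\langle f,G_\lambda\rangle+\dots=0$, which is consistent, because the $G_\lambda$-test produces $f(0)$-type information that holds trivially once $f=0$. I expect the main (mild) obstacle to be the rigorous justification of the test function in the zero-mass space.
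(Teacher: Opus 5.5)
Your argument is correct and is essentially the paper's: testing $S_\omega'(\phi)=0$ against $\phi$ itself (which equals $f$ when $c=0$) gives $\omega\|f\|_{L^2}^2+\|\nabla f\|_{L^2}^2+\|f\|_{L^{p+1}}^{p+1}=0$, hence $f=0$. The detour through $C_c^\infty$ test functions and density is unnecessary: $S_\omega'(\phi)=0$ holds in $X_\omega^*$, so you can insert $\phi\in X_\omega$ directly.
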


\begin{proof}
Since $S_\omega'(\phi) = 0$, then $\langle S_\omega'(\phi), \phi\rangle = 0$, that is,
\begin{align*}
\omega \|\phi\|_{L^2}^2 + \|\nabla f\|_{L^2}^2 + k(\lambda) |c|^2 - 2 \lambda \operatorname{Re} \overline{c} \lr{\phi,G_\lambda} + \|\phi\|_{L^{p+1}}^{p+1} = 0. 
\end{align*}
If we assume $c = 0$, then we have 
$\omega \|f\|_{L^2}^2 + \|\nabla f\|_{L^2}^2 + \|f\|_{L^{p+1}}^{p+1} = 0$, which implies 
$f = 0$. This contradicts $\phi \neq 0$.
\end{proof}

\begin{proof}[Proof of Theorem \ref{theo1}]
Theorem \ref{theo1} immediately follows from Lemmas \ref{lemm3} and \ref{lemm4}. 
\end{proof}

\section{Positivity of standing waves}\label{s.positivity}

In this section, we show the positivity of solutions to \eqref{SP} up to phase shift. We note that under \eqref{eq:cond-p}, $C_c^\infty(\R^N)$ is dense in $X_0^{\mathrm{reg}}\cap \dot{W}^{2, \frac{p+1}{p}}(\R^N)$ and the embedding
\begin{equation} \label{eq:embC_0}
    X_0^{\mathrm{reg}}\cap \dot{W}^{2, \frac{p+1}{p}}(\R^N)
    \hookrightarrow C_0(\R^N)
\end{equation}
holds (see Appendix~\ref{sec:embedding}), where 
\begin{align*}
&    X_0^{\mathrm{reg}}\cap \dot{W}^{2, \frac{p+1}{p}}(\R^N) 
   := \left\{f \in L^{p+1}(\R^N) \,\middle|\;
    \begin{gathered}
    \nabla f \in L^2(\R^N),
\\  \partial_j\partial_k f \in L^{\frac{p+1}{p}}(\R^N) \text{ for } j,k=1,\dots,N
    \end{gathered}\right\}
\end{align*}
equipped the norm 
\begin{align*}
&    \|f\|_{X_0^{\mathrm{reg}}\cap \dot{W}^{2, \frac{p+1}{p}}} := \|f\|_{L^{p+1}} + \|\nabla f\|_{L^2} + \sum_{j,k = 1}^{N}\|\partial_j\partial_k f\|_{L^{\frac{p+1}{p}}},
\end{align*}
and 
\[  C_0(\R^N):=\{f\in C(\R^N)\,|\; \lim_{|x|\to\infty}f(x)=0\}. \]

First, we show that any solutions for \eqref{SP} satisfy the same boundary condition as the function in the domain $D(-\Delta_\alpha)$. 

\begin{lemma}\label{lem:f(0)}
Let $v=f+cG_\lambda\in X_\omega$ satisfy $(\omega-\Delta_\alpha)v\in L^{\frac{p+1}{p}}(\R^N)$. 
Then $f\in C_0(\R^N)$ and $f(0)=(\alpha+\beta(\lambda))c$. 
\end{lemma}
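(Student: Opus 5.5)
We interpret the hypothesis distributionally: $(\omega-\Delta_\alpha)v$ is a priori an element of $X_\omega^*$ given by $w\mapsto \omega\langle v,w\rangle+\langle -\Delta_\alpha v,w\rangle$ via formula \eqref{eq:a(u, v)qua}. The hypothesis says this functional is represented by some $h\in L^{\frac{p+1}{p}}(\R^N)$. The plan has two steps. First, test against the regular part of $X_\omega$ to get an elliptic equation for $f$, and deduce the regularity needed for \eqref{eq:embC_0}. Second, test against the singular direction $w=G_\lambda$ to recover the boundary condition.

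\emph{Step 1.} Take $w=g\in C_c^\infty(\R^N)$, so $d=0$ in \eqref{eq:a(u, v)qua}. This gives
\[
\omega\langle v,g\rangle+\langle\nabla f,\nabla g\rangle-\lambda c\,\overline{\langle g,G_\lambda\rangle}=\langle h,g\rangle .
\]
In the sense of distributions this reads
\[
-\Delta f=h-\omega v+\lambda c\,G_\lambda=:F .
\]
Since $G_\lambda\in L^{\frac{p+1}{p}}$, the term $\lambda cG_\lambda$ is fine. For $\omega=0$ we have $F\in L^{\frac{p+1}{p}}$. For $\omega>0$ the term $\omega v$ lies in $L^2\cap L^{p+1}$; I would handle it by writing $(\omega-\Delta)f=h-\omega cG_\lambda+\lambda cG_\lambda$. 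The right-hand side is then in $L^{\frac{p+1}{p}}$, and the Bessel potential gives $f\in W^{2,\frac{p+1}{p}}$. In the case $\omega=0$, Calder\'on--Zygmund estimates give $\partial_j\partial_k f\in L^{\frac{p+1}{p}}$, because $\partial_j\partial_k f=R_jR_kF$ holds modulo harmonic polynomials. Those polynomials are excluded since $\nabla f\in L^2$ and $f\in L^{p+1}$. Hence $f\in X_0^{\mathrm{reg}}\cap\dot W^{2,\frac{p+1}{p}}$, and \eqref{eq:embC_0} gives $f\in C_0(\R^N)$.

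\emph{Step 2.} Test against $w=G_\lambda$, so $g=0$ and $d=1$:
\[
\omega\langle v,G_\lambda\rangle+k(\lambda)c-\lambda\langle v,G_\lambda\rangle-\lambda c\|G_\lambda\|_{L^2}^2=\langle h,G_\lambda\rangle .
\]
Since $k(\lambda)=\alpha+\beta(\lambda)+\lambda\|G_\lambda\|_{L^2}^2$, this simplifies to
\[
(\alpha+\beta(\lambda))c=\langle h,G_\lambda\rangle+(\lambda-\omega)\langle v,G_\lambda\rangle .
\]
On the other hand, Step 1 gives $(\lambda-\Delta)f=h+(\lambda-\omega)v$, interpreted in the same way as in Step 1 in the case $\omega=0$ (there $\lambda v$ adds a term in $L^{p+1}$). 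Formally, pairing with $G_\lambda$ and using $(\lambda-\Delta)G_\lambda=\delta_0$ yields
\[
f(0)=\langle h+(\lambda-\omega)v,G_\lambda\rangle ,
\]
which is exactly the right-hand side above. This would conclude the proof.

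\emph{Main obstacle.} The difficulty is justifying $\langle(\lambda-\Delta)f,G_\lambda\rangle=f(0)$ when $f$ is only in $X_0^{\mathrm{reg}}\cap\dot W^{2,\frac{p+1}{p}}$ and $G_\lambda$ is singular. I would use the density of $C_c^\infty$ in that space, stated before \eqref{eq:embC_0}. For smooth $f_n$ the identity holds by definition of the fundamental solution. To pass to the limit, I need: $f_n(0)\to f(0)$, which follows from \eqref{eq:embC_0}; $\langle\Delta f_n,G_\lambda\rangle\to\langle\Delta f,G_\lambda\rangle$, which holds since $G_\lambda\in L^{p+1}$ is dual to $L^{\frac{p+1}{p}}$; and $\lambda\langle f_n,G_\lambda\rangle\to\lambda\langle f,G_\lambda\rangle$, which holds since $G_\lambda\in L^{\frac{p+1}{p}}$ is dual to $L^{p+1}$. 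The exponent restrictions \eqref{eq:cond-p} ensure $G_\lambda\in L^{p+1}$ when $N=3$, which is precisely why $p<2$ is needed there.
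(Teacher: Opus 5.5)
Your proposal is correct and follows essentially the same route as the paper. You test with $C_c^\infty$ functions to get $-\Delta f=-\Delta_\alpha v+\lambda cG_\lambda$ (plus the $\omega$-terms), use elliptic regularity and \eqref{eq:embC_0} to get $f\in C_0(\R^N)$, and then evaluate $f(0)=\langle(\lambda-\Delta)f,G_\lambda\rangle$ by density of $C_c^\infty$ and by testing the bilinear form against $G_\lambda$. The differences are only cosmetic: you write out the case $\omega>0$, which the paper omits, and you spell out the Calder\'on--Zygmund argument behind the paper's appeal to ``elliptic regularity''.
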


\begin{proof}
We only prove the assertion in the case $\omega=0$. 
By the continuous embedding $X_0\hookrightarrow L^{p+1}(\R^N)$, we can identify $L^{\frac{p+1}{p}}(\R^N)$ with a subset of $X_0^*$ by
\[  \langle F, v\rangle_{X_0^*, X_0}
    :=\int_{\R^N} F\overline{v}\,dx
    =\langle F, v\rangle_{L^{\frac{p+1}{p}}, L^{p+1}}   \]
for $F\in L^{\frac{p+1}{p}}(\R^N)$ and $v\in X_0$. Then, by the explicit formula \eqref{eq:a(u, v)} for the bilinear form associated with $-\Delta_\alpha$, we have
\begin{align*}
    \langle -\Delta_\alpha v, \varphi\rangle_{\mathcal{D}', \mathcal{D}}
   &=\langle -\Delta_\alpha v, \varphi\rangle_{L^{\frac{p+1}{p}}, L^{p+1}}
\\ &=\langle -\Delta_\alpha v, \varphi\rangle_{X_0^*, X_0}
\\ &=\langle \nabla f, \nabla\varphi\rangle 
    -\lambda c\overline{\langle\varphi, G_\lambda\rangle}
\\ &=\langle -\Delta f-\lambda cG_\lambda, \varphi\rangle_{\mathcal{D}', \mathcal{D}},
    \quad \varphi\in \mathcal{D},
\end{align*}
where $\mathcal{D}:=C_c^\infty(\R^N)$ and $\mathcal{D}'$ denotes the space of distributions. 
This implies that $-\Delta_\alpha v=-\Delta f-\lambda cG_\lambda$ in $\mathcal{D}'$. 
Moreover, from the assumption $-\Delta_\alpha v\in L^{\frac{p+1}{p}}(\R^N)$, we obtain
\begin{align}\label{eq:DfDav}
    -\Delta f
    =-\Delta_\alpha v+\lambda cG_\lambda\in L^{\frac{p+1}{p}}(\R^N).
\end{align}
From the elliptic regularity, we have $f\in X_0^{\mathrm{reg}}\cap \dot{W}^{2, \frac{p+1}{p}}(\R^N)$, 
Therefore, \eqref{eq:embC_0} implies $f\in C_0(\R^N)$. Moreover, there exists $\{f_n\}_{n=1}^\infty\subset C_c^\infty(\R^N)$ such that $f_n\to f$ in $X_0^{\mathrm{reg}}\cap \dot{W}^{2, \frac{p+1}{p}}(\R^N)$. Therefore, by \eqref{eq:DfDav}, we obtain
\begin{align*}
    f(0)
   &=\lim_{n\to\infty} f_n(0)
    =\lim_{n\to\infty} \langle (-\Delta+\lambda)f_n, G_\lambda\rangle 
\\ &=\lim_{n\to\infty}\bigl(
    \langle -\Delta f_n, G_\lambda\rangle_{L^{\frac{p+1}{p}}, L^{p+1}}
    +\lambda\langle f_n, G_\lambda\rangle_{L^{p+1}, L^{\frac{p+1}{p}}}
    \bigr)
\\ &=\langle -\Delta f, G_\lambda\rangle_{L^{\frac{p+1}{p}}, L^{p+1}}
    +\lambda\langle f, G_\lambda\rangle_{L^{p+1}, L^{\frac{p+1}{p}}}
\\ &=\langle -\Delta_\alpha v+\lambda cG_\lambda, G_\lambda\rangle_{L^{\frac{p+1}{p}}, L^{p+1}}
    +\lambda\langle f, G_\lambda\rangle_{L^{p+1}, L^{\frac{p+1}{p}}}
\\ &=\langle -\Delta_\alpha v, G_\lambda\rangle_{X_0^*, X_0}
    +\lambda c\|G_\lambda\|_{L^2}^2
    +\lambda\langle f, G_\lambda\rangle_{L^{p+1}, L^{\frac{p+1}{p}}}
\\ &= (\alpha+\beta(\lambda))c,
\end{align*}
where we used \eqref{eq:a(u, v)} again in the last equality. 
This completes the proof.
\end{proof}

\begin{lemma}\label{lem:regofphi}
Let $0\le \omega<\omega_\alpha$ and $\phi=f+cG_\lambda\in\mathcal{A}_\omega$. 
Then $\phi\in C^2(\R^N\setminus\{0\})$, $f\in C_0(\R^N)$, and $f(0)=(\alpha+\beta(\lambda))c$. 
\end{lemma}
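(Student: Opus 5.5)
The plan is to reduce everything to Lemma~\ref{lem:f(0)} by checking that $(\omega-\Delta_\alpha)\phi\in L^{\frac{p+1}{p}}(\R^N)$. Since $\phi\in\mathcal{A}_\omega$, the identity $S_\omega'(\phi)=0$ means that $(\omega-\Delta_\alpha)\phi=-|\phi|^{p-1}\phi$ in $X_\omega^*$, in the sense of the bilinear form \eqref{eq:a(u, v)} (plus $\omega\langle\phi,\cdot\rangle$ when $\omega>0$). Because $\phi\in X_\omega\hookrightarrow L^{p+1}(\R^N)$, we have $|\phi|^{p-1}\phi\in L^{\frac{p+1}{p}}(\R^N)$. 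Hence $(\omega-\Delta_\alpha)\phi$ is represented by an $L^{\frac{p+1}{p}}$ function under the identification used in the proof of Lemma~\ref{lem:f(0)}. For $\omega>0$ the term $\omega\phi$ is harmless: moving it to the other side keeps us in the same dual class, since $\phi\in L^2$ and the pairing of $L^2$ with $H^1_\alpha$ is fine. Alternatively, one runs the $\omega>0$ version of Lemma~\ref{lem:f(0)}, whose proof the paper omits as analogous. Either way, Lemma~\ref{lem:f(0)} gives $f\in C_0(\R^N)$ and $f(0)=(\alpha+\beta(\lambda))c$.

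Next, for the regularity away from the origin, we test with $\varphi\in C_c^\infty(\R^N\setminus\{0\})$. As in the proof of Lemma~\ref{lem:f(0)}, $-\Delta_\alpha\phi=-\Delta f-\lambda cG_\lambda$ in $\mathcal{D}'$. On $\R^N\setminus\{0\}$ we also have $-\Delta G_\lambda=-\lambda G_\lambda$, so there $-\Delta\phi=-\Delta_\alpha\phi$. Thus $\phi$ solves
\[
-\Delta\phi=-\omega\phi-|\phi|^{p-1}\phi \quad\text{in } \mathcal{D}'(\R^N\setminus\{0\}).
\]
On any ball $B$ with $\overline{B}\subset\R^N\setminus\{0\}$, $G_\lambda$ is smooth and $f$ is continuous. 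Hence $\phi$ is continuous, in particular bounded on $B$, and so the right-hand side is in $L^\infty(B)$. Interior $W^{2,q}$ estimates for all $q<\infty$ then give $\phi\in C^{1,\gamma}(B)$. The right-hand side $\omega\phi+|\phi|^{p-1}\phi$ is then Hölder continuous, because $s\mapsto|s|^{p-1}s$ is locally Hölder (indeed $C^1$ for $p>1$). Schauder estimates then yield $\phi\in C^{2,\gamma}$ on a smaller ball. Covering $\R^N\setminus\{0\}$ by such balls gives $\phi\in C^2(\R^N\setminus\{0\})$.

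The main obstacle is the first step: justifying that the weak equation in $X_\omega^*$ really places $(\omega-\Delta_\alpha)\phi$ in $L^{\frac{p+1}{p}}$ in the sense required by Lemma~\ref{lem:f(0)}. This needs density of $C_c^\infty$ (or $H^1_\alpha$) in $X_\omega$, so that the functional $v\mapsto-\int|\phi|^{p-1}\phi\,\overline{v}$ coincides with $(\omega-\Delta_\alpha)\phi$ on all of $X_\omega$. Once that identification is made, the rest is standard local elliptic bootstrap.
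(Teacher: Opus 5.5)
Your proposal is correct and is essentially the paper's argument: you apply Lemma~\ref{lem:f(0)} using $(\omega-\Delta_\alpha)\phi=-|\phi|^{p-1}\phi\in L^{\frac{p+1}{p}}(\R^N)$ (for $\omega>0$ via the analogous version of that lemma, which the paper also relies on), and you obtain $C^2$ regularity from local elliptic theory on balls avoiding the origin; the paper simply cites Lieb--Loss there, while you order the steps so that $f\in C_0$ supplies the local boundedness for your $W^{2,q}$/Schauder bootstrap. The step you flag as the main obstacle is not one, because $S_\omega'(\phi)=0$ in $X_\omega^*$ already gives the identity against every $v\in X_\omega$ without any density argument; you should also drop your first option of moving $\omega\phi$ to the right-hand side, since $\omega\phi$ is only known to lie in $L^2$, not in $L^{\frac{p+1}{p}}$, so the $\omega=0$ version of the lemma would not apply as stated.
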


\begin{proof}
Since $\phi$ solves
\[  (\omega-\Delta)\phi  
    +|\phi|^{p-1}\phi
    =0  \]
on any open ball $B\subset\R^N\setminus\{0\}$, the elliptic regularity (see, e.g., 
\cite[Theorem~11.7]{LL01}) implies that $\phi\in C^2(\R^N\setminus\{0\})$. 
Moreover, since
\[
    (\omega-\Delta_\alpha)\phi=-|\phi|^{p-1}\phi\in L^{\frac{p+1}{p}}(\R^N),
\]
we obtain the assertion from Lemma~\ref{lem:f(0)}. 
\end{proof}

\begin{lemma} \label{pos.sol}
Let $0\le \omega<\omega_\alpha$ and $\phi\in\mathcal{A}_\omega$. 
Then there exists $\theta\in\R$ such that $e^{i\theta}\phi$ is positive.
\end{lemma}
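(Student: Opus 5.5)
The plan is to first remove the phase using the singular coefficient, then show that the imaginary part vanishes, and finally show that the real profile has no negative part. Let $\phi=f+cG_\lambda\in\mathcal{A}_\omega$. By Lemma~\ref{lemm8} we have $c\neq 0$, so we choose $\theta\in\R$ with $e^{i\theta}c>0$. Set $\psi:=e^{i\theta}\phi=\tilde f+\tilde cG_\lambda$, where $\tilde f=e^{i\theta}f$ and $\tilde c=e^{i\theta}c>0$. Since $S_\omega$ is gauge invariant, $\psi\in\mathcal{A}_\omega$. By Lemma~\ref{lem:regofphi}, $\tilde f\in C_0(\R^N)$, $\tilde f(0)=(\alpha+\beta(\lambda))\tilde c\in\R$, and $\psi\in C^2(\R^N\setminus\{0\})$.

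\emph{Step 1: $\psi$ is real-valued.} Since $G_\lambda$ is real and $\tilde c$ is real, $w:=\operatorname{Im}\psi=\operatorname{Im}\tilde f$ belongs to $X_\omega^{\mathrm{reg}}$. Hence $iw\in X_\omega$ is admissible as a test function, and its singular coefficient is zero. I would compute $\operatorname{Re}\langle S_\omega'(\psi),iw\rangle$ using the explicit formula \eqref{eq:a(u, v)} with $d=0$.

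The gradient term gives $\operatorname{Re}\langle\nabla\tilde f,i\nabla w\rangle=\int\nabla(\operatorname{Im}\tilde f)\cdot\nabla w=\|\nabla w\|_{L^2}^2$. The singular term $-\lambda\tilde c\,\overline{\langle iw,G_\lambda\rangle}=i\lambda\tilde c\int wG_\lambda$ is purely imaginary because $\tilde c$ is real. The mass and nonlinear terms give $\omega\|w\|_{L^2}^2+\int|\psi|^{p-1}w^2$. Therefore
\[
0=\|\nabla w\|_{L^2}^2+\omega\|w\|_{L^2}^2+\int_{\R^N}|\psi|^{p-1}w^2\,dx .
\]
This forces $\nabla w=0$. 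Since $w\in L^{p+1}$ when $\omega=0$, or $w\in L^2$ when $\omega>0$, we conclude $w=0$.

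\emph{Step 2: $\psi\ge 0$.} Now $\psi=\tilde f+\tilde cG_\lambda$ is real with $\tilde c>0$. Because $\tilde f$ is bounded near $0$ and $G_\lambda\to+\infty$ at the origin, $\psi_-:=\min(\psi,0)$ vanishes on a neighbourhood of $0$. Hence $\psi_-$ lies in $X_\omega^{\mathrm{reg}}$: it is a Lipschitz truncation of a function that is $C^2$ away from $0$, and it has the needed integrability away from $0$, where $G_\lambda$ is smooth and in $H^1$. The key claim is that for $g\in X_\omega^{\mathrm{reg}}$ vanishing near $0$,
\[
\langle-\Delta_\alpha\psi,g\rangle=\int_{\R^N}\nabla\psi\cdot\nabla g\,dx .
\]
This follows from \eqref{eq:a(u, v)} with $d=0$ together with $-\Delta G_\lambda=-\lambda G_\lambda$ away from the origin: one integrates $\langle\nabla(\tilde cG_\lambda),\nabla g\rangle$ by parts on the support of $g$, where there is no boundary contribution. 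Testing $S_\omega'(\psi)=0$ against $\psi_-$ then gives
\[
\|\nabla\psi_-\|_{L^2}^2+\omega\|\psi_-\|_{L^2}^2+\int_{\R^N}|\psi|^{p-1}\psi_-^2\,dx=0 .
\]
Arguing as in Step 1, this yields $\psi_-=0$, that is, $\psi\ge0$. I expect this truncation step to be the main obstacle. One must rigorously verify that $\psi_-\in X_\omega^{\mathrm{reg}}$, that $\nabla\psi\cdot\nabla\psi_-=|\nabla\psi_-|^2$ a.e., and that the identity above holds for $\omega=0$, where only $L^{p+1}\cap\dot H^1$ control is available. I would handle the last point by approximating $g$ by compactly supported cutoffs $g\chi_R$ and passing to the limit using $\nabla g\in L^2$ and $g\in L^{p+1}$.

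\emph{Step 3: strict positivity.} On the connected open set $\R^N\setminus\{0\}$ (connected because $N\ge2$), $\psi\ge0$ is $C^2$ and satisfies $-\Delta\psi+V\psi=0$ with $V=\omega+\psi^{p-1}\ge0$ locally bounded. Moreover $\psi\not\equiv0$, since $\psi\to+\infty$ at the origin. The strong maximum principle therefore gives $\psi>0$ on $\R^N\setminus\{0\}$. Thus $e^{i\theta}\phi=\psi$ is positive, which proves the lemma.
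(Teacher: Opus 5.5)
Your proof is correct. The phase normalization and Step 3 match the paper: the paper takes $\lambda>\omega_\alpha$ and makes $f(0)>0$, which is equivalent to $c>0$, and then applies the strong maximum principle on balls in $\R^N\setminus\{0\}$. Steps 1 and 2, however, take a genuinely different route.

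The paper argues pointwise. It uses that $\operatorname{Im}\phi=\operatorname{Im}f$ is continuous on all of $\R^N$. It vanishes at the origin, because $f(0)=(\alpha+\beta(\lambda))c$ is real, and it vanishes at infinity. It is a classical solution of $(\omega-\Delta+|\phi|^{p-1})\operatorname{Im}\phi=0$ on $\R^N\setminus\{0\}$. A nonzero extremum at some $x_0\neq0$ would then contradict the sign of $\Delta\operatorname{Im}\phi(x_0)$. Nonnegativity follows from the same argument applied to a negative minimum of $\phi$, which must lie away from $0$ since $\phi\to+\infty$ there.

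You instead test $S_\omega'(\psi)=0$ against $i\operatorname{Im}\psi$ and against $\psi_-$, which is an energy/truncation argument. Its advantage is that Step 1 is purely variational. It uses only that the singular coefficient is real, and needs neither the boundary condition $\operatorname{Im}f(0)=0$ nor any classical regularity. Step 2 needs only local boundedness of $\tilde f$ near the origin, plus the integration-by-parts identity for test functions supported away from $0$. This makes the argument robust in weaker-regularity settings.

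The price is the verification you flagged. You need $\psi_-=(\eta\psi)_-\in X_\omega^{\mathrm{reg}}$ for a cutoff $\eta$ vanishing near $0$, the chain rule $\nabla\psi_-=1_{\{\psi<0\}}\nabla\psi$ a.e., and control of the tail in the integration by parts when $\omega=0$; the exponential decay of $G_\lambda$ and $\nabla G_\lambda$ makes the cutoff terms vanish. All of these go through as you indicate. The paper's pointwise argument avoids them but relies on the $C^2(\R^N\setminus\{0\})$ regularity and the continuity of $f$ at the origin from Lemma~\ref{lem:regofphi}.
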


\begin{proof}
Let $\lambda>\omega_\alpha$, that is, $\alpha+\beta(\lambda)>0$. 
By Lemma~\ref{lem:regofphi}, we can decompose 
\[
    \phi=f+\frac{f(0)}{\alpha+\beta(\lambda)}G_\lambda.
\]
Since $f(0)\ne 0$ by Lemma~\ref{lemm8}, we can take $\theta\in\R$ such that $e^{i\theta}f(0)>0$. 
By the gauge invariance of \eqref{SP}, we may assume without loss of generality that $f(0)>0$ and show that $\phi$ is positive.

We put $\psi:=\operatorname{Im} \phi$. 
Then $\psi$ satisfies
\begin{equation}
    (\omega-\Delta_\alpha+|\phi|^{p-1})\psi=0.
\end{equation}
In particular, since $\psi\in C^2(\R^N\setminus\{0\})$ by Lemma~\ref{lem:regofphi}, it follows that $\psi$ satisfies
\begin{equation}
    (\omega-\Delta+|\phi|^{p-1})\psi=0,
    \quad x\in\R^N\setminus\{0\}
\end{equation}
in the classical sense. 
Since $f(0)\in\R$, we have $\psi(0)=0$ and $\psi\in C(\R^N)$.

Suppose that $\psi\not\equiv 0$. 
Since $\psi(0)=0$ and $\psi(x)\to 0$ as $|x|\to\infty$, the function $|\psi|$ attains its maximum at some point $x_0\ne 0$ with $|\psi(x_0)|>0$. 
It suffices to consider the case $\psi(x_0)<0$. 
Then $\nabla\psi(x_0)=0$ and $\Delta\psi(x_0)\ge 0$. 
However, by the equation, we have
\[
    \Delta\psi(x_0)
    =(\omega+|\phi(x_0)|^{p-1})\psi(x_0)
    <0,
\]
which is a contradiction. 
Therefore, $\psi\equiv 0$, and in particular, $\phi$ is real-valued.

Next, we show that $\phi$ is nonnegative. 
If not, since $\phi(x)\to\infty$ as $|x|\downarrow 0$ and $\phi(x)\to 0$ as $|x|\to\infty$, the function $\phi$ attains its minimum at some point $x_0\ne 0$ with $\phi(x_0)<0$. 
Then the same argument as above yields a contradiction. 
Hence, $\phi\ge 0$.

Finally, we show that $\phi$ is positive. 
Let $x_0\in\R^N$ be arbitrary. 
Since $\phi(x)\to\infty$ as $|x|\downarrow 0$, we can take $x_1\in\R^N\setminus\{0\}$ and an open ball $B\subset\R^N\setminus\{0\}$ such that $x_0,x_1\in B$ and $\phi(x_1)>0$. 
Since $\phi\in C^2(B)$ and
\[
    (\omega-\Delta)\phi+|\phi|^{p-1}\phi=0,
    \quad x\in B,
\]
the strong maximum principle (see, e.g., \cite[Theorem~9.10]{LL01} implies that $\phi(x_0)>0$. 
This completes the proof.
\end{proof}

\section{Uniqueness of standing waves}\label{s.uniqueness}

\begin{lemma} \label{lem:Qnonnega}
If $v\in X_0$ satisfies $\int_{\mathbb{R}^N}v\chi_\alpha\,dx=0$, then $\langle -\Delta_\alpha v, v\rangle\ge0$.
\end{lemma}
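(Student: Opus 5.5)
The plan is to prove the inequality first on the dense subspace $H^1_\alpha(\R^N)$ via spectral theory, and then pass to $X_0$ by a density argument that keeps the orthogonality constraint.

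\textbf{Step 1: the inequality on $H^1_\alpha$.} Take $v\in H^1_\alpha(\R^N)$ with $\langle v,\chi_\alpha\rangle=0$. The spectrum of $-\Delta_\alpha$ is $\{e_\alpha\}\cup[0,\infty)$, and $e_\alpha$ is a simple eigenvalue with real eigenfunction $\chi_\alpha$. So the spectral projection onto $[0,\infty)$ is $I-\langle\cdot,\chi_\alpha\rangle\chi_\alpha$. By the spectral theorem for the closed form, $\langle -\Delta_\alpha v,v\rangle\ge 0$ for every $v$ in the form domain orthogonal to $\chi_\alpha$.

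\textbf{Step 2: orthogonal approximation in $X_0$.} Now let $v=f+cG_\lambda\in X_0$ with $\int v\chi_\alpha\,dx=0$. This pairing is well defined because $\chi_\alpha\in L^{(p+1)/p}$, as it is a multiple of $G_{\omega_\alpha}$. I would approximate $f$ by $f_n\in C_c^\infty(\R^N)$ in $L^{p+1}\cap\dot H^1$. This density is standard: cut off at large radius, using $\nabla f\in L^2$ and $f\in L^{p+1}$, then mollify. Set $v_n:=f_n+cG_\lambda\in H^1_\alpha$, so $v_n\to v$ in $X_0$. The $v_n$ need not be orthogonal to $\chi_\alpha$, so I correct them: $\tilde v_n:=v_n-\langle v_n,\chi_\alpha\rangle\chi_\alpha\in H^1_\alpha$. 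Since $v\mapsto\langle v,\chi_\alpha\rangle$ is continuous on $X_0$ (Hölder in $L^{p+1}\times L^{(p+1)/p}$), we get $\langle v_n,\chi_\alpha\rangle\to\langle v,\chi_\alpha\rangle=0$. Hence $\tilde v_n\to v$ in $X_0$, and each $\tilde v_n$ is orthogonal to $\chi_\alpha$.

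\textbf{Step 3: passage to the limit.} By Step 1, $Q(\tilde v_n)=\langle-\Delta_\alpha\tilde v_n,\tilde v_n\rangle\ge 0$. The functional $Q$ is continuous on $X_0$ (proposition in Section~\ref{s.X0}), and by Proposition~\ref{p.X0} it agrees with the extended form. Therefore $\langle-\Delta_\alpha v,v\rangle=Q(v)=\lim Q(\tilde v_n)\ge 0$.

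The main obstacle is the density claim in Step 2, namely that $C_c^\infty$ is dense in $L^{p+1}\cap\dot H^1$. This needs some care with the cutoff term $f\nabla\eta_R$. For $N=3$ I would bound it using $f\in L^{6}$ (Sobolev for $\dot H^1$) together with $\|\nabla\eta_R\|_{L^3}$ bounded and supported in an annulus going to infinity. For $N=2$ I would use $f\in L^{p+1}$ together with $\|\nabla\eta_R\|_{L^{2(p+1)/(p-1)}}\to0$ when $p>1$, where $\|\nabla\eta_R\|_{L^q}\sim R^{-1+2/q}$ and $q>2$. If this fails, I would try logarithmic cutoffs.
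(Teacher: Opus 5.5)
Your proof is correct and follows essentially the same route as the paper. Both arguments get nonnegativity on the orthogonal complement of $\chi_\alpha$ from the spectral theorem on a dense subspace, approximate $v$ in $X_0$, subtract the $\chi_\alpha$-component (which tends to zero by H\"older), and pass to the limit using continuity of the form on $X_0$. The paper takes its approximants in $D(-\Delta_\alpha)$ rather than $H^1_\alpha$ and simply asserts the density, whereas your explicit cutoff-and-mollify construction is essentially what the paper's appendix (Theorem~\ref{thm:dense}) provides.
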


\begin{proof}
First, for $v\in D(-\Delta_\alpha)$ satisfying $\int_{\mathbb{R}^N}v\chi_\alpha\,dx=0$, the inequality $\langle -\Delta_\alpha v, v\rangle\ge0$ follows immediately from the spectral properties.

Next, let $v\in X_0$ satisfy $\int_{\mathbb{R}^N}v\chi_\alpha\,dx=0$.
Take a sequence $\{w_n\}\subset D(-\Delta_\alpha)$ such that $w_n\to v$ in $X_0$, and define
\[  v_n
    :=w_n-\langle w_n,\chi_\alpha \rangle\chi_\alpha. \]
Then $v_n\in D(-\Delta_\alpha)$,  
$\langle v_n,\chi_\alpha\rangle=0$, and $v_n\to v$ in $X_0$.
Since $\langle -\Delta_\alpha v_n, v_n\rangle\ge0$ for all $n$, we obtain
\[  \langle -\Delta_\alpha v, v\rangle
    =\lim_{n\to\infty}\langle -\Delta_\alpha v_n, v_n\rangle
    \ge0. \]
This completes the proof.
\end{proof}

\begin{proposition} \label{lemm5}
Let $0 \le \omega < \omega_\alpha$ and $\phi_1,\phi_2 \in \mathcal{A}_\omega$ are positive solution of \eqref{SP}. Then $\phi_1 = \phi_2$ holds.
\end{proposition}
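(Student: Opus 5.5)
We would prove Proposition~\ref{lemm5} with the Brezis--Oswald (Picone-type) argument rather than by testing the difference equation directly. Testing with $w=\phi_1-\phi_2$ gives $\langle(\omega-\Delta_\alpha)w,w\rangle+\int(\phi_1^p-\phi_2^p)(\phi_1-\phi_2)\,dx=0$. The nonlinear term is nonnegative, but the quadratic term can be negative because $\omega<\omega_\alpha$. Lemma~\ref{lem:Qnonnega} does not help either, since we have no orthogonality to $\chi_\alpha$. The Picone trick removes the linear part entirely: the $\omega$ terms cancel, and the point interaction only enters through boundary terms at the origin. These are exactly what the boundary condition of Lemma~\ref{lem:regofphi} should kill.

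Concretely, write $\phi_i=f_i+c_iG_\lambda$ with $\lambda>\omega_\alpha$. By Lemma~\ref{pos.sol}, Lemma~\ref{lemm8} and Lemma~\ref{lem:regofphi}, $c_i>0$, $f_i\in C_0(\R^N)$, $f_i(0)=(\alpha+\beta(\lambda))c_i$, and $\phi_i\in C^2(\R^N\setminus\{0\})$ is positive. On $\Omega_{\varepsilon,R}:=\{\varepsilon<|x|<R\}$, each $\phi_i$ solves the classical equation $-\Delta\phi_i+\omega\phi_i+\phi_i^p=0$. We multiply the equation for $\phi_1$ by $(\phi_1^2-\phi_2^2)/\phi_1$ and the one for $\phi_2$ by $(\phi_1^2-\phi_2^2)/\phi_2$, subtract, and integrate by parts. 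The $\omega$ terms cancel, and we obtain
\begin{equation*}
\int_{\Omega_{\varepsilon,R}}\Bigl(\Bigl|\nabla\phi_1-\tfrac{\phi_1}{\phi_2}\nabla\phi_2\Bigr|^2+\Bigl|\nabla\phi_2-\tfrac{\phi_2}{\phi_1}\nabla\phi_1\Bigr|^2\Bigr)dx
+\int_{\Omega_{\varepsilon,R}}(\phi_1^{p-1}-\phi_2^{p-1})(\phi_1^2-\phi_2^2)\,dx
=B_\varepsilon+B_R,
\end{equation*}
where the boundary terms are
\[
B_\varepsilon,\ B_R=\pm\int_{\{|x|=\varepsilon\}\text{ or }\{|x|=R\}}\Bigl(\tfrac{\partial_r\phi_1}{\phi_1}-\tfrac{\partial_r\phi_2}{\phi_2}\Bigr)(\phi_1^2-\phi_2^2)\,dS.
\]
Both integrands on the left are nonnegative, and the second vanishes only where $\phi_1=\phi_2$. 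So once $B_\varepsilon,B_R\to0$, monotone convergence gives $\phi_1\equiv\phi_2$.

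The main obstacle is $B_\varepsilon$. Near the origin we use the expansion $G_\lambda(x)=-\frac1{2\pi}\log|x|-\beta(\lambda)+o(1)$ for $N=2$, and $\frac1{4\pi|x|}-\beta(\lambda)+o(1)$ for $N=3$. Combined with the boundary condition, this gives
\[
\phi_i=c_i\bigl(G^{\mathrm{sing}}(x)+\alpha\bigr)+o(1).
\]
Hence $\phi_1^2-\phi_2^2=(c_1^2-c_2^2)(G^{\mathrm{sing}})^2+O(G^{\mathrm{sing}})$. For the logarithmic derivatives we need
\[
\frac{\partial_r\phi_i}{\phi_i}=\frac{\partial_rG^{\mathrm{sing}}}{G^{\mathrm{sing}}+\alpha}+\frac{\partial_rf_i+o(1)}{\phi_i},
\]
so their difference is controlled by $(\partial_rf_1-\partial_rf_2)/\phi$ plus lower-order terms. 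The key point is that the common leading behaviour, which depends only on $\alpha$ and not on $c_i$, cancels. This is exactly where the boundary condition enters.

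The remaining step is to bound $\partial_rf_i$ on small spheres well enough that $\varepsilon^{N-1}\cdot\varepsilon^{-(N-2)}\cdot o(1)\to0$ (with logs for $N=2$). We would obtain this by writing $(\lambda-\Delta)f_i=(\lambda-\omega)\phi_i-\phi_i^p$, whose right-hand side is in $L^q$ for some $q>N/2$ near $0$ (using $p<2$ when $N=3$). Elliptic regularity then gives $f_i\in W^{2,q}_{\mathrm{loc}}$, hence $f_i\in C^{0,\gamma}$, and averaging over spheres gives $\int_{|x|=\varepsilon}|\partial_rf_i|\,dS=o(\varepsilon^{N-2})$. Radial symmetry is not yet available, so we would work with these spherical averages.

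For $B_R$: when $\omega>0$, exponential decay makes it vanish. When $\omega=0$, we instead choose a sequence $R_k\to\infty$ along which the integrand is small, using $\phi_i\in L^{p+1}$ and $\nabla\phi_i\in L^2$ at infinity. A cutoff version of the test functions, which lie in $X_0$, would serve as a fallback.
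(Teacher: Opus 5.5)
Your Picone (Brezis--Oswald) route is genuinely different from the paper's, and your treatment of $B_\varepsilon$ holds up. Since $\phi_i=c_i(G^{\mathrm{sing}}+\alpha)+o(1)$, the leading parts of $\partial_r\phi_i/\phi_i$ coincide, and the remainder is controlled once $\int_{|x|=\varepsilon}|\nabla f_i|\,dS=o(\varepsilon)$ for $N=3$, respectively $o(1/\log(1/\varepsilon))$ for $N=2$. One correction: this bound comes from $D^2f_i\in L^q$ with $q>N/2$, via a scaled trace inequality on annuli, not from H\"older continuity of $f_i$. For $N=2$, $\nabla f_i$ is in fact bounded near $0$.

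\textbf{Gap in $B_R$.} The real problem is the boundary term at infinity. Expanded, its integrand is $\phi_1\partial_r\phi_1+\phi_2\partial_r\phi_2-\phi_2^2\,\partial_r\phi_1/\phi_1-\phi_1^2\,\partial_r\phi_2/\phi_2$. The cross terms contain $1/\phi_i$. No integrability of $\phi_i$ or $\nabla\phi_i$ controls $|\nabla\phi_1|/\phi_1$ or $\phi_2/\phi_1$ at infinity. So ``choose $R_k$ along which the integrand is small, using $\phi_i\in L^{p+1}$, $\nabla\phi_i\in L^2$'' fails as stated for $\omega=0$. For $\omega>0$, exponential decay alone has the same defect: you also need a lower bound on $\phi_i$ with a comparable rate, or a bound $|\nabla\phi_i|\le C\phi_i$.

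\textbf{Repair.} Use the cutoff you allude to, with an explicit absorption step. Test with $\eta_R^2(\phi_1^2-\phi_2^2)/\phi_i$, where $\eta_R=1$ on $B_R$ and $\eta_R=0$ off $B_{2R}$. The Picone term becomes $\eta_R^2(\phi_1^2+\phi_2^2)|\nabla\log(\phi_1/\phi_2)|^2$. The new term $2\eta_R\nabla\eta_R\cdot\nabla\log(\phi_1/\phi_2)\,(\phi_1^2-\phi_2^2)$ is bounded by $\frac12\eta_R^2(\phi_1^2+\phi_2^2)|\nabla\log(\phi_1/\phi_2)|^2+2|\nabla\eta_R|^2(\phi_1^2+\phi_2^2)$, since $(\phi_1^2-\phi_2^2)^2\le(\phi_1^2+\phi_2^2)^2$. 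The first piece is absorbed. What remains is $R^{-2}\int_{R<|x|<2R}(\phi_1^2+\phi_2^2)\,dx\lesssim R^{((N-2)p-N-2)/(p+1)}$, which tends to $0$ under \eqref{eq:cond-p} using only $\phi_i\in L^{p+1}$.

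\textbf{Comparison with the paper.} Your remark that Lemma~\ref{lem:Qnonnega} is useless for lack of orthogonality misses the paper's key idea: it manufactures orthogonality by rescaling. With $c=\int\phi_2\chi_\alpha/\int\phi_1\chi_\alpha>0$ one has $c\phi_1-\phi_2\perp\chi_\alpha$, so $\langle(\omega-\Delta_\alpha)(c\phi_1-\phi_2),c\phi_1-\phi_2\rangle\ge0$. Expanding with the equation and using AM--GM gives $\|\phi_1\|_{L^{p+1}}^{p+1}\|\phi_2\|_{L^{p+1}}^{p+1}\le\int\phi_1^p\phi_2\,dx\int\phi_1\phi_2^p\,dx$. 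This forces equality in H\"older, so $\phi_2=\mu\phi_1$. Then $\int\phi_1^p\phi_2=\int\phi_1\phi_2^p$ gives $\mu=\mu^p$, hence $\mu=1$. That argument stays at the level of the bilinear form on $X_0$: no expansion at the origin, no regularity of $f_i$, no analysis at infinity. Your route, once repaired, avoids the spectral input $\chi_\alpha$ and shows explicitly where $f_i(0)=(\alpha+\beta(\lambda))c_i$ is used. The cost is the local analysis near $0$ and the cutoff argument at infinity.
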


\begin{proof}[Proof of Proposition~\ref{lemm5}]
We modify the argument of \cite[Lemma~2.7]{N17}. Let $\phi_1,\phi_2\in\mathcal{A}_\omega$ be two positive solutions of~\eqref{SP}. We take $c\in\mathbb{R}$ such that
\[  \int_{\mathbb{R}^N}(c\phi_1-\phi_2)\chi_\alpha\,dx
    =0,
    \quad\text{that is,}\quad
    c:=\frac{\int_{\mathbb{R}^N}\phi_2\chi_\alpha\,dx}{\int_{\mathbb{R}^N}\phi_1\chi_\alpha\,dx}
    >0, \]
where $\chi_\alpha$ is the eigen function of $-\Delta_\alpha$ belonging to the eigenvalue $e_\alpha$. By Lemma~\ref{lem:Qnonnega}, we have
\begin{equation}\label{eq:quadposi}
    \langle -\Delta_\alpha(c\phi_1-\phi_2), c\phi_1-\phi_2\rangle\ge0.
\end{equation}
Using~\eqref{SP} and the Cauchy--Schwarz inequality
\[  2c\|\phi_1\|_{L^{p+1}}^{(p+1)/2}\|\phi_2\|_{L^{p+1}}^{(p+1)/2}
    \le c^2\|\phi_1\|_{L^{p+1}}^{p+1}
    +\|\phi_2\|_{L^{p+1}}^{p+1},    \]
we compute
\begin{align*}
    0&\le
   \langle (\omega-\Delta_\alpha)(c\phi_1-\phi_2),\, c\phi_1-\phi_2\rangle
\\ &=-2c\langle(\omega-\Delta_\alpha)\phi_1,\phi_2\rangle-c^2\|\phi_1\|_{L^{p+1}}^{p+1}
    -\|\phi_2\|_{L^{p+1}}^{p+1}
\\ &\le 2c\Bigl(
      -\langle(\omega-\Delta_\alpha)\phi_1,\phi_2\rangle
      -\|\phi_1\|_{L^{p+1}}^{(p+1)/2}
       \|\phi_2\|_{L^{p+1}}^{(p+1)/2}
    \Bigr).
\end{align*}
Since $c>0$, it follows that
\[
    \|\phi_1\|_{L^{p+1}}^{p+1}
    \|\phi_2\|_{L^{p+1}}^{p+1}
    \le \langle(\omega-\Delta_\alpha)\phi_1,\phi_2\rangle^2.
\]
We note that
\begin{equation} \label{eq:intphij}
    \int_{\mathbb{R}^N}\phi_1^p\phi_2\,dx
    =-\langle(\omega-\Delta_\alpha)\phi_1,\phi_2\rangle
    =-\langle(\omega-\Delta_\alpha)\phi_2,\phi_1\rangle
    =\int_{\mathbb{R}^N}\phi_1\phi_2^p\,dx.
\end{equation}
Therefore,
\[  \|\phi_1\|_{L^{p+1}}^{p+1}
    \|\phi_2\|_{L^{p+1}}^{p+1}
    \le
    \int_{\mathbb{R}^N}\phi_1^p\phi_2\,dx\cdot
    \int_{\mathbb{R}^N}\phi_1\phi_2^p\,dx.
\]
Combining this with Hölder's inequality
\begin{align*}
    &\int_{\mathbb{R}^N}\phi_1^p\phi_2\,dx
    \le \|\phi_1\|_{L^{p+1}}^{p}\|\phi_2\|_{L^{p+1}},&
    &\int_{\mathbb{R}^N}\phi_1\phi_2^p\,dx
    \le \|\phi_1\|_{L^{p+1}}\|\phi_2\|_{L^{p+1}}^{p},
\end{align*}
we obtain
\begin{align*}
    &\int_{\mathbb{R}^N}\phi_1^p\phi_2\,dx
    = \|\phi_1\|_{L^{p+1}}^{p}\|\phi_2\|_{L^{p+1}},&
    &\int_{\mathbb{R}^N}\phi_1\phi_2^p\,dx
    = \|\phi_1\|_{L^{p+1}}\|\phi_2\|_{L^{p+1}}^{p}.
\end{align*}
By the equality condition for Hölder's inequality
\cite[Theorem~2.3 (ii.a)]{LL01},
we conclude that $\phi_2=\lambda\phi_1$ for some $\lambda>0$. By the equality~\eqref{eq:intphij} and $p>1$, we must have $\lambda=1$.
This completes the proof.
\end{proof}
\section{Radiality {of standing waves}}\label{s.radiality}

Let $P_0$ be the radial projection defined by averaging over rotations:
\begin{align*}
(P_0 f)(x) := e_0 \int_{\mathbb{S}^{N-1}} f(|x| \omega)\, d \omega.
\end{align*}
Here $e_0$ is defined by 
\begin{align*}
e_0 = 
\begin{cases}
\frac{1}{2 \pi} & (N=2),  \\
\frac{1}{4 \pi} & (N=3), 
\end{cases}
\end{align*}
It is known that $P_0$ is the orthogonal projection from $L^2(\mathbb{R}^N)$ onto $L_{\mathrm{rad}}^2(\mathbb{R}^N)$, is a bounded operator from $(L^{p+1}\cap \dot{H}^1)(\mathbb{R}^N)$ to $(L^{p+1}\cap \dot{H}^1)_{\mathrm{rad}}(\mathbb{R}^N)$, and commutes with $\Delta$ on $H^2(\mathbb{R}^N)$.

\begin{lemma} \label{lemm6}
Let $f,g \in (L^{p+1}\cap \dot{H}^1)(\mathbb{R}^N)$. Then
\begin{align}\label{eq:Pzerofg}
\int_{\mathbb{R}^N} \nabla P_0 f \cdot \overline{\nabla P_0 g}\,dx
= \int_{\mathbb{R}^N} \nabla P_0 f \cdot \overline{\nabla g}\,dx.
\end{align}
In particular, if $f$ is radial, then
\begin{align*}
\int_{\mathbb{R}^N} \nabla f \cdot \overline{\nabla P_0 g}\,dx
= \int_{\mathbb{R}^N} \nabla f \cdot \overline{\nabla g}\,dx.
\end{align*}
\end{lemma}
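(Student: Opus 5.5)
The identity says that $\nabla P_0 f$ is orthogonal in $L^2$ to $\nabla(g-P_0g)$. Equivalently, the map $g\mapsto P_0 g$ is the orthogonal projection for the Dirichlet form. I would first prove the claim for smooth compactly supported functions and then pass to the limit by density.

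\emph{Smooth case.} For $f,g\in C_c^\infty(\R^N)$, write $P_0$ as an average over rotations: $P_0 g=\int_{SO(N)} g\circ R\,dR$ with Haar probability measure. This agrees with the spherical-mean formula with the normalization $e_0$. Since $\nabla(g\circ R)(x)=R^{T}(\nabla g)(Rx)$, we get
\[
\nabla P_0 g=\int_{SO(N)} R^{T}(\nabla g)(R\,\cdot)\,dR .
\]
The function $h:=P_0 f$ is radial, so $\nabla h(x)=h'(|x|)x/|x|$ satisfies $\nabla h(R^{T}y)=R^{T}\nabla h(y)$. Changing variables $y=Rx$ (rotations preserve Lebesgue measure and inner products) gives
\[
\int \nabla h\cdot\overline{R^{T}(\nabla g)(Rx)}\,dx=\int R^{T}\nabla h(y)\cdot \overline{R^{T}\nabla g(y)}\,dy=\int\nabla h\cdot\overline{\nabla g}\,dy .
\]
Averaging over $R$ (Fubini is justified by smoothness and compact support) yields \eqref{eq:Pzerofg}.

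\emph{Density.} I would use that $C_c^\infty(\R^N)$ is dense in $(L^{p+1}\cap\dot H^1)(\R^N)$, which I take from the density statement preceding \eqref{eq:embC_0} in the Appendix. I would also use the stated boundedness of $P_0$ on this space, so that $\nabla P_0 f_n\to\nabla P_0 f$ in $L^2$ whenever $f_n\to f$ in $L^{p+1}\cap\dot H^1$. Both sides of \eqref{eq:Pzerofg} are continuous bilinear expressions in $(\nabla P_0 f,\nabla P_0 g)$ and $(\nabla P_0 f,\nabla g)$, so the identity passes to the limit.

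The main obstacle is this limiting step: it depends on the boundedness of $P_0$ with respect to $\|\nabla\cdot\|_{L^2}$. To confirm it, note that $|\nabla P_0 g|\le P_0|\nabla g|$ pointwise by the rotation-average formula and $|R^{T}v|=|v|$, and apply Jensen's inequality, giving $\|\nabla P_0 g\|_{L^2}\le\|\nabla g\|_{L^2}$.

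\emph{Radial case.} If $f$ is radial, then $P_0 f=f$. Applying \eqref{eq:Pzerofg} gives $\int\nabla f\cdot\overline{\nabla P_0 g}=\int\nabla f\cdot\overline{\nabla g}$, which is the second statement.
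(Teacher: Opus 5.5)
Your proof is correct. The density step is the same as the paper's, but you handle the smooth case by a genuinely different route.

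\textbf{Smooth case.} For $f,g\in C_c^\infty(\R^N)$ the paper works at the operator level. It integrates by parts and uses that $P_0$ is self-adjoint, idempotent and commutes with $\Delta$:
\[
\int \nabla P_0 f\cdot\overline{\nabla P_0 g}\,dx=\langle -\Delta P_0 f,P_0 g\rangle=\langle -\Delta P_0^2 f,g\rangle=\langle -\Delta P_0 f,g\rangle=\int\nabla P_0 f\cdot\overline{\nabla g}\,dx .
\]
You instead write $P_0$ as a Haar average over $SO(N)$. You then use only the first-order equivariance $\nabla(g\circ R)=R^{T}(\nabla g)\circ R$, the rotation invariance of $\nabla P_0 f$, and a change of variables.

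\textbf{What each route buys.} The paper's route is shorter once the listed properties of $P_0$ are taken as known. However, it passes through second derivatives, so it genuinely needs smooth (or $H^2$) functions before approximating. Your route is self-contained and uses first derivatives only. As a by-product, via $|\nabla P_0 g|\le P_0|\nabla g|$ and Jensen, it gives the contraction $\|\nabla P_0 g\|_{L^2}\le\|\nabla g\|_{L^2}$. That is the $\dot H^1$ part of the boundedness of $P_0$ which the paper simply asserts as known when passing to the limit.

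\textbf{Citation fix.} The density of $C_c^\infty(\R^N)$ in $(L^{p+1}\cap\dot H^1)(\R^N)$ is not the statement preceding \eqref{eq:embC_0}; that one concerns $X_0^{\mathrm{reg}}\cap\dot W^{2,\frac{p+1}{p}}(\R^N)$. The result you need is Theorem~\ref{thm:dense}, applied to $\dot W^{1,2}$ with integrability exponent $r=p+1$. This is admissible because $2\le p+1\le 2^*$ under \eqref{eq:cond-p}.
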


\begin{proof}
First assume $f,g\in C_c^\infty(\mathbb{R}^N)$. Since $P_0$ is self-adjoint, $P_0^2=P_0$, and $P_0$ commutes with $\Delta$, integration by parts yields \eqref{eq:Pzerofg}.

For general $f,g\in (L^{p+1}\cap \dot{H}^1)(\mathbb{R}^N)$, the result follows by an approximation argument, using that $C_c^\infty(\mathbb{R}^N)$ is dense in $(L^{p+1}\cap \dot{H}^1)(\mathbb{R}^N)$ and that $P_0$ is bounded on $(L^{p+1}\cap \dot{H}^1)(\mathbb{R}^N)$.
\end{proof}

\begin{lemma} \label{lemm7}
Let $\phi \in \mathcal{A}_\omega$. Then $\phi$ is radial.
\end{lemma}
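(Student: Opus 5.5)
Given $\phi\in\mathcal{A}_\omega$, we may assume $\phi>0$ by Lemma~\ref{pos.sol}. The plan is to show that $P_0\phi$ is again a positive solution of \eqref{SP} and then invoke the uniqueness of Proposition~\ref{lemm5} to get $\phi=P_0\phi$.

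Since $G_\lambda$ is radial, $P_0\phi=P_0f+cG_\lambda$ with $P_0f\in X_\omega^{\mathrm{reg}}$. Hence $P_0\phi\in X_\omega$, and $P_0\phi>0$ because it is an average of positive values. The nonlinearity is not linear, so $P_0\phi$ cannot simply be plugged into \eqref{SP}. We therefore argue variationally instead.

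First I would show that $\phi$ is a minimizer, i.e.\ $\mathcal{A}_\omega=\mathcal{M}_\omega$ up to phase. By Lemma~\ref{lemm3} and Lemma~\ref{lemm4}, $\mathcal{M}_\omega\neq\emptyset$ and $\mathcal{M}_\omega\subset\mathcal{A}_\omega$. Take $v\in\mathcal{M}_\omega$. Its modulus $|v|$ is again a minimizer, using $\langle-\Delta_\alpha|v|,|v|\rangle\le\langle-\Delta_\alpha v,v\rangle$, which follows from the formula \eqref{eq:a(u, v)qua} plus the diamagnetic inequality applied through an approximation by $D(-\Delta_\alpha)$. This step is somewhat delicate because of the $c$-terms; if it proves awkward I would instead apply Lemma~\ref{pos.sol} directly to $v$. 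Either way we obtain a positive element of $\mathcal{A}_\omega$ that is a minimizer. Proposition~\ref{lemm5} then says this minimizer equals $\phi$, so $S_\omega(\phi)=d(\omega)$.

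Next I would show $S_\omega(P_0\phi)\le S_\omega(\phi)$. For the quadratic part, use \eqref{eq:a(u, v)qua} with $u=\phi$. By Lemma~\ref{lemm6} with $f=g$,
\[
\|\nabla P_0 f\|_{L^2}^2=\langle\nabla P_0f,\nabla f\rangle\le\|\nabla P_0 f\|_{L^2}\|\nabla f\|_{L^2},
\]
so $\|\nabla P_0 f\|_{L^2}\le\|\nabla f\|_{L^2}$. The coefficient $c$ is unchanged. Since $G_\lambda$ is radial and $P_0$ is self-adjoint, $\langle P_0\phi,G_\lambda\rangle=\langle\phi,P_0G_\lambda\rangle=\langle\phi,G_\lambda\rangle$, so the cross terms are unchanged too. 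For the other terms, $\|P_0\phi\|_{L^2}\le\|\phi\|_{L^2}$, and Jensen's inequality on spheres gives $\|P_0\phi\|_{L^{p+1}}\le\|\phi\|_{L^{p+1}}$ by convexity of $t\mapsto t^{p+1}$. Together these give $S_\omega(P_0\phi)\le S_\omega(\phi)=d(\omega)$. Thus $P_0\phi\in\mathcal{M}_\omega\subset\mathcal{A}_\omega$, it is positive, and Proposition~\ref{lemm5} yields $P_0\phi=\phi$, so $\phi$ is radial.

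The main obstacle is the first step, identifying $\phi$ as a minimizer. This needs the existence of a positive minimizer together with uniqueness, and I expect Lemma~\ref{pos.sol} applied to an element of $\mathcal{M}_\omega$ to settle it without needing the diamagnetic inequality. The remaining estimates are routine once the $c$-independence of the cross terms is noted.
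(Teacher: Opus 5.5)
Your argument is correct, but it takes a different route from the paper's.

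\textbf{The paper's route.} The paper never shows that $\phi$ itself is a minimizer. It minimizes $S_\omega$ over the radial subspace $X_{\omega,\mathrm{rad}}$, rerunning the compactness argument of Lemma~\ref{lemm3} there. It then uses Lemma~\ref{lemm6} to get the symmetric-criticality identity $\langle S_\omega'(v_0),\psi\rangle=\langle S_\omega'(v_0),P_0\psi\rangle$, so the radial minimizer $v_0$ is a critical point on all of $X_\omega$. Positivity (Lemma~\ref{pos.sol}) and uniqueness of positive solutions (Proposition~\ref{lemm5}) then give $\phi=e^{i\theta}v_0$.

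\textbf{Your route.} You take the global minimizer from Lemma~\ref{lemm3} and rotate it to be positive by Lemma~\ref{pos.sol}; this uses gauge invariance of $S_\omega$, so your fallback suffices and the diamagnetic inequality is never needed. Proposition~\ref{lemm5} then shows the positive $\phi$ is a minimizer. Next you show that $P_0$ does not increase $S_\omega$, using:
\begin{itemize}
\item Lemma~\ref{lemm6} with $f=g$ plus Cauchy--Schwarz for the gradient term;
\item invariance of $c$ and of $\langle\cdot,G_\lambda\rangle$, because $G_\lambda$ is radial;
\item contraction in $L^2$;
\item Jensen's inequality in $L^{p+1}$.
\end{itemize}
Hence $P_0\phi$ is a positive minimizer, and a second use of Proposition~\ref{lemm5} gives $P_0\phi=\phi$.

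One small point: when $\omega=0$, $\phi$ need not lie in $L^2$. So the identity $\langle P_0\phi,G_\lambda\rangle=\langle\phi,G_\lambda\rangle$ should be justified by Fubini in polar coordinates, not by $L^2$ self-adjointness of $P_0$. This is routine.

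\textbf{Trade-off.} Your route avoids a second existence argument and the derivative-level computation. It also gives, as a by-product, that every element of $\mathcal{A}_\omega$ attains $d(\omega)$, which the stability proofs use. However, it depends on an inequality: convexity of $t\mapsto t^{p+1}$ through Jensen. The paper's argument uses only identities, valid for any gauge-invariant nonlinearity $g(|u|)u$ at a radial $v_0$. It also does not need to know beforehand that $\phi$ is a ground state.
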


\begin{proof}
We define the least energy level $d_{\mathrm{rad}}(\omega)$ for $0 \le \omega < \omega_\alpha$ by
\begin{align*}
d_{\mathrm{rad}}(\omega) := \inf \{ S_\omega(v) \,|\; v \in X_{\omega,\mathrm{rad}} \} \ (> -\infty),
\end{align*}
where $X_{\omega,\mathrm{rad}}$ denotes the subspace of radial functions in $X_\omega$.

By the same argument as in the proofs of Lemmas \ref{lemm3} and \ref{lemm4}, there exists $v_0 \in X_{\omega,\mathrm{rad}}$ such that $v_0$ is a minimizer for $d_{\mathrm{rad}}(\omega)$ and
\begin{align}
\label{eq3}
\lr{S_\omega'(v_0), \psi} = 0 \quad 
\text{for all } \psi \in X_{\omega,\mathrm{rad}}.
\end{align}
By Lemma \ref{lemm6} and the radiality of $v_0$, we obtain
\begin{align}
\label{eq4}
\lr{S_\omega'(v_0), \psi} 
= \lr{S_\omega'(v_0), P_0 \psi} \quad 
\text{for all } \psi \in X_\omega.
\end{align}
Since $P_0 \psi \in X_{\omega,\mathrm{rad}}$ for $\psi \in X_\omega$, it follows from \eqref{eq3} and \eqref{eq4} that
\begin{align*}
\lr{S_\omega'(v_0), \psi} = 0 \quad \text{for all } \psi \in X_\omega.
\end{align*}
Therefore, $v_0 \in \mathcal{A}_\omega$. From Lemma \ref{pos.sol}, there exists $\theta_1, \theta_2\in\R$ such that $e^{i\theta_1} v_0$ and $e^{i\theta_2}\phi$ are positive functions. From Proposition \ref{lemm5}, we have $\phi = e^{i (\theta_1 - \theta_2)} v_0$. Since $v_0$ is radial, $\phi$ is also radial.
\end{proof}

\begin{proof}[Proof of Theorem \ref{thm:uniq}]
From Theorem \ref{theo1}, Lemma \ref{pos.sol} and Proposition \ref{lemm5}, 
there unique exists $\phi_\omega \in \mathcal{A}_\omega$ such that $\phi_\omega$ is positive. From Lemma \ref{lemm7}, $\phi_\omega$ is radial. 
Note that {from Lemma \ref{pos.sol}, } for all $v \in \mathcal{A}_\omega$, there exists $\theta \in \R$ such that {$e^{-i \theta} v$ is positive. Moreover, from Proposition \ref{lemm5}, we have }$e^{-i \theta} v = \phi_\omega$. 
Therefore $\mathcal{A}_\omega = \{e^{i \theta} \phi_\omega \,|\; \theta \in \R\}$. 

Now we prove $\phi_\omega$ is decreasing. Since $\phi_\omega$ is positive and radial, then $\phi_\omega$ satisfies the following ODE:
\begin{align}
\label{eq7}
        \phi_\omega''(r)+\frac{N-1}{r}\phi_\omega'(r) - \omega \phi_\omega(r) - \phi_\omega(r)^p=0, \quad r\in[R,\infty)
\end{align}
for all $R > 0$. First, we prove $\phi_\omega'(r) \le 0$  for all $r \in (R,\infty)$. Assume by contradiction, that there exists $r_0 \in (R,\infty)$ such that $\phi_\omega'(r_0) > 0$.
Since $\phi_\omega' \in C((0,\infty))$, $\phi_\omega'(r) > 0$ for $|r - r_0|$ sufficiently small. Moreover, $\phi_\omega(r) > 0$ for all $r \in (R,\infty))$ and $\phi_\omega(r) \to 0$ as $r \to \infty$. Therefore, there exists local maximum point $r_1 \in (R,\infty)$ of $\phi_\omega$ on $(R,\infty)$. 
Then we have
\begin{align*}
        0 = \phi_\omega''(r_1)+\frac{N-1}{r_1}\phi_\omega'(r_1) - \omega \phi_\omega(r_1) - \phi_\omega(r_1)^p < 0.
\end{align*}
This is a contradiction.
Finally, we prove $\phi_\omega'(r) < 0$ for all $r \in (R,\infty)$. If not, there exists $r_0 \in (R,\infty)$ such that $\phi_\omega'(r_0) = 0$. From \eqref{eq7}, we have $\phi_\omega''(r_0) > 0$. Since $\phi_\omega'' \in C((R,\infty))$, we have $\phi_\omega''(r) > 0$ for $|r - r_0|$ sufficiently small. Therefore $\phi_\omega'(r) > 0$ for $|r - r_0|$ sufficiently small and $r > r_0$. This is a contradiction. Therefore we have $\phi_\omega'(r) < 0$ for all $r \in (R,\infty)$. Since $R > 0$ is arbitrary, we obtain $\phi_\omega'(r) < 0$ for all $r \in (0,\infty)$, namely, $\phi_\omega$ is decreasing.
\end{proof}

\section{Decay of standing waves}\label{s.asymptotics}
V\'{e}ron, in \cite{Veron81} considers the  one dimensional equation
\begin{equation}\label{VeronA}
    \left\{\begin{aligned}
        &\frac{d^2u_A}{dr^2}(r)+\frac{N-1}{r}\frac{du_A}{dr}(r)-u_A|u_A|^{p-1}(r)=0, &r\in[R,\infty),\\
        &u_A(R)=A.
    \end{aligned}\right.
\end{equation}
The following results are proved.
\begin{proposition}[{\cite[Proposition 1.3]{Veron81}}] \label{prop1} Let $A$, $R>0$, and $p\geq1$  three real numbers. Then there exists a unique function $u_A$ in $L^\infty(R,\infty)$ such that  solves \eqref{VeronA} and $du_A/dr$ is in $ H^1(R,\infty;r^{N-1}dr)$.
\end{proposition}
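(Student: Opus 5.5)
Existence and uniqueness will be handled separately, by a variational argument and a comparison argument respectively. Throughout, $u'$ denotes $du/dr$ and $p\ge1$.

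\textbf{Existence.} I would minimize
\[
J(u)=\int_R^\infty\Bigl(\tfrac12|u'|^2+\tfrac{1}{p+1}|u|^{p+1}\Bigr)r^{N-1}\,dr
\]
over the affine class
\[
K=\Bigl\{u\in H^1_{\mathrm{loc}}([R,\infty))\,\Big|\, u(R)=A,\ u'\in L^2(r^{N-1}dr),\ u\in L^{p+1}(r^{N-1}dr)\Bigr\}.
\]
The class $K$ is nonempty: it contains $A\eta$ with $\eta$ a smooth cutoff, $\eta(R)=1$.

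Since $J$ is coercive and convex, a minimizing sequence has $u_n'$ bounded in $L^2(r^{N-1}dr)$ and $u_n$ bounded in $L^{p+1}$. On each compact interval $[R,R']$ this gives a uniform $H^1$ bound, so the trace condition passes to the limit. Weak lower semicontinuity then yields a minimizer $u_A$.

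\textbf{Properties of the minimizer.} The Euler--Lagrange equation is $(r^{N-1}u')'=r^{N-1}|u|^{p-1}u$, which is \eqref{VeronA}. Truncation shows $0\le u_A\le A$: replacing $u$ by $\min(|u|,A)$ does not increase $J$, and strict convexity gives uniqueness of the minimizer. Hence $u_A\in L^\infty$.

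For the required regularity, note that $u_A$ is nonincreasing. Indeed, $r^{N-1}u'$ is increasing, since its derivative $r^{N-1}u^p$ is nonnegative. If $u'(r_0)>0$ at some $r_0$, then $u'$ stays positive for $r>r_0$, contradicting boundedness together with $u\in L^{p+1}$. Then $u''=u^p-\tfrac{N-1}{r}u'$ lies in $L^2(r^{N-1}dr)$; here I would estimate $u^p\le A^{p-1}u$ and control $\int u^2r^{N-1}$ via the ODE if needed. This gives $u'\in H^1(R,\infty;r^{N-1}dr)$.

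\textbf{Uniqueness.} Let $u,v$ be two bounded solutions with $u',v'\in H^1(r^{N-1}dr)$, and set $w=u-v$, so $w(R)=0$. Multiplying $(r^{N-1}w')'=r^{N-1}(|u|^{p-1}u-|v|^{p-1}v)$ by $w$ and integrating over $[R,\rho]$ gives
\[
\rho^{N-1}w'(\rho)w(\rho)=\int_R^\rho |w'|^2r^{N-1}\,dr+\int_R^\rho\bigl(|u|^{p-1}u-|v|^{p-1}v\bigr)(u-v)\,r^{N-1}\,dr .
\]
Both integrals on the right are nonnegative, the second by monotonicity of $s\mapsto|s|^{p-1}s$.

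\textbf{Main obstacle.} The step I expect to be hardest is showing that the boundary term satisfies $\liminf_{\rho\to\infty}\rho^{N-1}|w'(\rho)w(\rho)|=0$. We know $w$ is bounded and $w'\in L^2(r^{N-1}dr)$, which gives $\rho^{N}|w'(\rho)|^2\to0$ along a sequence. That alone is not enough, because $\rho^{N-1}$ against $\rho^{-N/2}$ leaves a factor $\rho^{N/2-1}$.

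To close the gap I would combine this with the decay of $u$ and $v$ themselves. From $(r^{N-1}u')'\ge0$ and $u'\to0$ one gets $r^{N-1}u'\le0$ increasing to a limit $L$. The limit must be $L=0$, since otherwise $u\sim r^{2-N}$ when $N=3$ (respectively $u\sim\log r$, impossible when $N=2$), which contradicts finiteness of $\int u^{p}r^{N-1}$; that integral equals $-R^{N-1}u'(R)$ up to the limit. Therefore $\rho^{N-1}w'(\rho)\to0$, and since $w$ is bounded the boundary term vanishes.

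Consequently $\int|w'|^2r^{N-1}\,dr=0$. Together with $w(R)=0$ this forces $w\equiv0$.
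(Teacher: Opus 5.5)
The paper does not prove this statement; it is quoted from V\'eron, so your argument has to stand on its own. The existence part is sound, apart from one slip. $u_A$ need not lie in $L^2(r^{N-1}dr)$: for $N=2$, $p\ge3$ it decays like $r^{-2/(p-1)}$. So replace $u^p\le A^{p-1}u$ by $u^{2p}\le A^{p-1}u^{p+1}$, which is integrable because the minimizer lies in $L^{p+1}(r^{N-1}dr)$.

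\textbf{The gap: the claim $L=0$.} The genuine gap is in the uniqueness step, at the claim $L=\lim_{r\to\infty}r^{N-1}u'(r)=0$. For $N=3$, $L<0$ gives $u\sim|L|/r$, and $\int^\infty u^p r^2\,dr$ is finite exactly when $p>3$. So your contradiction only works for $p\le 3$. The statement allows every $p\ge1$. For $N=3$, $p>3$ (in general $p>N/(N-2)$), the decaying solution really does behave like the fundamental solution, $u_A\sim c\,r^{2-N}$ with $c>0$. Hence $r^{N-1}u_A'\to-(N-2)c\neq0$, and your conclusion $\rho^{N-1}w'(\rho)\to0$ is false. (In the range the paper actually uses, $N=3$ with $1<p<2$, your claim happens to hold.)

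\textbf{Two further unjustified steps.} Both concern an arbitrary competitor. First, the uniqueness class only assumes boundedness, so $u\ge0$ (needed for $(r^{N-1}u')'\ge0$) must be proved rather than borrowed from the minimizer; a maximum-principle argument does it. Second, $r^{N-1}u'\le0$ does not follow from $u'\to0$, since a positive constant value of $r^{N-1}u'$ is compatible with $u'\to0$. You need boundedness when $N=2$. When $N=3$ you need the equation: an increasing bounded nonnegative $u$ tends to a positive limit, which forces $(r^2u')'\ge c\,r^2$.

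\textbf{Repair along your route.} Use $w\to0$ instead of $\rho^{N-1}w'\to0$. Once $u,v\ge0$ and $r^{N-1}u',\,r^{N-1}v'\le0$ are known, these quantities are monotone with finite limits, so $\rho^{N-1}w'(\rho)$ stays bounded. Also $u,v\to0$, since a positive limit would make $\int u^p r^{N-1}\,dr$ diverge. Hence the boundary term tends to $0$.

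\textbf{Repair avoiding sign and asymptotic information.} By your identity, $F(\rho):=\rho^{N-1}w'(\rho)w(\rho)$ is nondecreasing. If $w\not\equiv0$, then $F\ge\delta>0$ on some $[\rho_0,\infty)$. Then $(w^2)'>0$ there, so $|w|\ge m:=|w(\rho_0)|>0$. The inequality $(|a|^{p-1}a-|b|^{p-1}b)(a-b)\ge 2^{1-p}|a-b|^{p+1}$ gives $F(\rho)\gtrsim\rho^N$. Hence $|w'(\rho)|\ge F(\rho)/(\rho^{N-1}\|w\|_{L^\infty})\gtrsim\rho$, which contradicts $w'\in L^2(r^{N-1}dr)$.
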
 

\begin{proposition}[{\cite[Lemma 2.2]{Veron81}}] \label{prop2} 
If $1<p<\frac{N}{N-2},$ $N\geq2$, with $\frac{N}{N-2}=\infty$ when $N=2$, the function  defined on $(0,\infty)$ as $l_{p,N}r^{-\frac{2}{p-1}}$, with
$$l_{p,N}=\left(\frac{2}{p-1}\left(\frac{2p}{p-1}-N\right)\right)^{\frac{1}{p-1}},$$
is a solution of 
\begin{equation}
\label{eq6a}
    \begin{aligned}
        &\frac{d^2u}{dr^2}+\frac{N-1}{r}\frac{du}{dr}-u^p=0, &\text{ on } (0,\infty).
    \end{aligned}
\end{equation}
  For any $A>0$, all the corresponding solutions $u_A$ of \eqref{VeronA}, given by Proposition \ref{prop1}, are equivalent to $r^{-\frac{2}{p-1}}$,
that is,
\begin{align*}
\lim_{r \to \infty} u_A(r) \, r^{\frac{2}{p-1}} \in (0,\infty).
\end{align*}
\end{proposition}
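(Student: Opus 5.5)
The statement has two parts: a direct verification that $l_{p,N}r^{-\frac{2}{p-1}}$ solves \eqref{eq6a}, and a comparison argument for $u_A$. I would establish two-sided bounds $u_A\sim r^{-\frac{2}{p-1}}$ first, and only then upgrade them to the existence of the limit.

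\textbf{The explicit solution.} Set $k:=\frac{2}{p-1}$, so that $kp=k+2$. For $u=l r^{-k}$ one computes
\[
u''+\frac{N-1}{r}u'=l\,k\,(k+2-N)\,r^{-k-2},\qquad u^p=l^p r^{-k-2}.
\]
Hence $u$ solves \eqref{eq6a} if and only if $l^{p-1}=k(k+2-N)$. Since $k+2=\frac{2p}{p-1}$, this is exactly the stated constant $l_{p,N}$. It is positive precisely when $\frac{2p}{p-1}>N$, i.e.\ $p<\frac{N}{N-2}$.

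\textbf{Two-sided bounds.} Let $v_0:=l_{p,N}r^{-k}$. The equation \eqref{eq6a} is invariant under multiplication by constants only up to a change in the nonlinearity, and this gives sub- and supersolutions:
\begin{itemize}
\item for $0<\varepsilon\le 1$, the function $\varepsilon v_0$ satisfies $\Delta(\varepsilon v_0)=\varepsilon v_0^p\ge(\varepsilon v_0)^p$, so it is a subsolution;
\item for $M\ge1$, the function $Mv_0$ satisfies $\Delta(Mv_0)=Mv_0^p\le (Mv_0)^p$, so it is a supersolution.
\end{itemize}
Here $\Delta$ denotes the radial Laplacian. I would choose $\varepsilon$ and $M$ so that $\varepsilon v_0(R)\le A\le Mv_0(R)$.

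Before comparing, I would check from Proposition~\ref{prop1} that $u_A>0$ and $u_A(r)\to0$ as $r\to\infty$. Indeed, $u_A$ is bounded, and $u_A'\in H^1(R,\infty;r^{N-1}dr)$ together with the convexity-type argument used in the proof of Theorem~\ref{thm:uniq} (no interior positive maximum, no interior negative minimum) forces $u_A$ to be positive, decreasing and to tend to $0$.

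With this in hand, consider $w:=u_A-Mv_0$. It satisfies $w''+\frac{N-1}{r}w'\ge c(r)w$ with $c\ge0$, because $s\mapsto s^p$ is increasing. Moreover $w(R)\le0$ and $w\to0$ at infinity. The maximum principle on $(R,\infty)$ then gives $w\le 0$, i.e.\ $u_A\le Mv_0$, and the symmetric argument gives $u_A\ge\varepsilon v_0$. Thus
\[
\varepsilon\, l_{p,N}\le r^{k}u_A(r)\le M\, l_{p,N}\qquad\text{for } r\ge R.
\]

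\textbf{Existence of the limit.} To show that $\lim_{r\to\infty} r^k u_A(r)$ exists, I would pass to Emden--Fowler variables $t=\log r$, $w(t)=r^ku_A(r)$. A computation, again using $kp=k+2$, turns the ODE into the autonomous equation
\[
\ddot w+(N-2-2k)\dot w=w^p-l_{p,N}^{p-1}w.
\]
The damping coefficient $N-2-2k<-k<0$ is nonzero. Hence the energy $\frac12\dot w^2-F(w)$, with $F'(w)=w^p-l^{p-1}w$, is monotone along trajectories.

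Since $w$ is confined to $[\varepsilon l,Ml]$, the Lyapunov/LaSalle argument gives $\dot w\to0$ and convergence of $w$ to an equilibrium of the right-hand side. The equilibria are $0$ and $l_{p,N}$, and $0$ is excluded by the lower bound. Therefore $r^ku_A(r)\to l_{p,N}\in(0,\infty)$.

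\textbf{Main obstacle.} I expect the last step to be the delicate one: it requires justifying $\dot w\to 0$, i.e.\ controlling $r^{k+1}u_A'$. I would first try to obtain this from boundedness of $w$ combined with the monotone energy, using the ODE to bound $\ddot w$ and a Barbalat-type lemma. The maximum principle on the unbounded interval is routine once the decay $u_A\to0$ is secured.
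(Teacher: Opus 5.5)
Your proposal is correct. Note first that the paper contains no proof of this statement; it cites V\'eron. In Section~\ref{s.asymptotics} the paper deliberately proves only two-sided bounds for $\phi_0$, via Lemma~\ref{nprop1}. There, rescaling a solution by $\varepsilon\le1$ gives the equation with coefficient $\varepsilon^{1-p}\ge1$, and one compares with $l_{p,N}r^{-\frac{2}{p-1}}$ in both orders.

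Your subsolution $\varepsilon v_0$ and supersolution $Mv_0$ are exactly this device. For the upper bound the paper rescales the solution rather than $v_0$, which is equivalent. Your check that $u_A>0$ and $u_A\to0$ plays the role of the hypothesis $u,v\in C_0$ in Lemma~\ref{nprop1}.

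What you add is the Emden--Fowler/Lyapunov step giving existence of the limit. The paper outsources this to V\'eron and says it does not need it. Your step is sound and gives more than stated: the limit equals $l_{p,N}$. The trade-off is that the paper's comparison is two lines but yields only $r^{-\frac{2}{p-1}}$ bounds, while yours costs a phase-plane argument and recovers the full asymptotics.

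Two steps need one more line each.

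\emph{Decay and positivity of $u_A$.} ``No interior positive maximum and no interior negative minimum'' only shows that $u_A$ is eventually monotone with a finite limit $L$. To get $L=0$, use the equation. If $L\neq0$, then $(r^{N-1}u_A')'=r^{N-1}|u_A|^{p-1}u_A$ forces $|u_A'|\gtrsim r$ for large $r$, contradicting boundedness. Alternatively, the $H^1(R,\infty;r^{N-1}dr)$ condition gives $|u_A|^{p-1}u_A=u_A''+\frac{N-1}{r}u_A'\in L^2(r^{N-1}dr)$, which is incompatible with $L\neq 0$. The same reasoning excludes a sign change: after a first zero with $u_A'<0$, the solution would be negative and decreasing, hence tend to a negative limit. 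A double zero is excluded by ODE uniqueness.

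\emph{Bounding $\dot w$.} The obstacle you flag is resolved by exactly the route you suggest. With $k=\frac{2}{p-1}$, the energy $E=\frac12\dot w^2-F(w)$ satisfies $\dot E=(2k+2-N)\dot w^2\ge0$, so $E$ has a limit in $(-\infty,+\infty]$. Suppose the limit were $+\infty$. Since $F(w)$ is bounded, $\dot w^2=2E+2F(w)\to\infty$, so $\dot w$ would eventually have fixed sign with $|\dot w|\to\infty$. Then $w$ would leave $[\varepsilon l_{p,N},Ml_{p,N}]$, a contradiction.

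Hence $\dot w$ is bounded, $\int^\infty\dot w^2\,dt<\infty$, and $\ddot w$ is bounded by the ODE. Barbalat's lemma then gives $\dot w\to0$. It follows that $F(w(t))$ converges. Since $F$ is strictly monotone on each side of $l_{p,N}$, we get $w(t)\to w_\infty$. Finally, $\ddot w\to F'(w_\infty)$ together with $\dot w\to0$ forces $F'(w_\infty)=0$, so $w_\infty=l_{p,N}$, the zero equilibrium being excluded by your lower bound.
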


While V\'{e}ron's analysis establishes the exact asymptotic behavior for the free Laplacian, our current purposes do not need the exact limit, but only sharp upper and lower estimates. We give a simple and short proof of the integrability condition for $\phi_0$.

In what follows, we use the notation $C_0$ for the space of continuous functions that vanish at infinity. To prove Theorem~\ref{nthm:decay}, we establish the following lemma.

\begin{lemma}\label{nprop1} 
Let $u, v\in (C_0\cap C^2)([1,\infty))$ be positive solutions to 
\begin{equation}\label{Veron}
    u''+\frac{N-1}{r}u'-u^{p}=0,\quad 
    r>1,
\end{equation}
and 
\begin{equation}\label{gVeron}
    v''+\frac{N-1}{r}v'-\varepsilon^{1-p}v^{p}=0,\quad r>1
\end{equation}
with $0<\varepsilon\le1$, respectively, and $u(1)\ge v(1)$. Then $u\ge v$ on $[1, \infty)$.
\end{lemma}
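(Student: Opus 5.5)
Set $w:=v-u$ and argue by contradiction via a maximum principle on the set where $w>0$. Suppose $w(r_*)>0$ for some $r_*>1$. Since $w(1)=v(1)-u(1)\le 0$ and $w(r)\to 0$ as $r\to\infty$ (both $u,v\in C_0$), the function $w$ attains a positive maximum on $[1,\infty)$ at some interior point $r_0\in(1,\infty)$. There $w'(r_0)=0$ and $w''(r_0)\le 0$, and $v(r_0)>u(r_0)>0$.

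Subtracting \eqref{Veron} from \eqref{gVeron} and evaluating at $r_0$ gives
\[
w''(r_0)+\frac{N-1}{r_0}w'(r_0)=\varepsilon^{1-p}v(r_0)^p-u(r_0)^p .
\]
Since $0<\varepsilon\le 1$ and $p>1$, we have $\varepsilon^{1-p}\ge 1$. Hence the right-hand side satisfies $\varepsilon^{1-p}v(r_0)^p-u(r_0)^p\ge v(r_0)^p-u(r_0)^p>0$, by strict monotonicity of $t\mapsto t^p$ on $(0,\infty)$. The left-hand side, however, equals $w''(r_0)\le 0$. This is a contradiction, so $w\le 0$, i.e.\ $u\ge v$ on $[1,\infty)$.

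The only delicate point is to make sure the maximum is attained in the interior and not at the boundary. This uses $w(1)\le 0$ together with the decay of $u$ and $v$ at infinity, which makes $\sup w>0$ a maximum at a finite point $r_0>1$. No further regularity is needed beyond $u,v\in C^2$, so this step is straightforward.
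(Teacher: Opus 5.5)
Your proof is correct and is essentially the paper's argument with the sign flipped. The paper sets $w=u-v$ and gets a contradiction at a negative interior minimum using $u^p-\varepsilon^{1-p}v^p\le u^p-v^p$; you take $w=v-u$ and argue at a positive interior maximum, and you justify more explicitly than the paper that the extremum is attained in $(1,\infty)$.
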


\begin{proof}
Set $w:=u-v$. By the equations, we have
\begin{equation}\label{eq:keyuniq}
\begin{aligned}
    w''
    +\frac{N-1}{r}w'
   &=u^p-\varepsilon^{1-p}v^p
\\ &\le u^p-v^p,\quad 
    r>1.
\end{aligned}
\end{equation}
Since $w(1)\ge0$ and $\lim_{r\to\infty}w(r)=0$, we see that $w\ge 0$ on $[1, \infty)$. Indeed, if not, then $w$ attains a negative local minimum. At such a point, the left-hand side of \eqref{eq:keyuniq} is nonnegative, while the right-hand side is strictly negative. This is a contradiction.
\end{proof}

By using the explicit solution founded in Proposition~\ref{prop2}, we prove Theorem~\ref{nthm:decay}.

\begin{proof}[Proof of Theorem~\ref{nthm:decay}]
Put $u(r):=l_{p,N}r^{-\frac{2}{p-1}}$ and $v(r):=\varepsilon \phi_0(r)$, where $0<\varepsilon\le1$ is chosen so that $u(1)\ge v(1)$. Then since $u, v\in (C_0\cap C^2)[1,\infty)$ are positive solutions to \eqref{Veron} and \eqref{gVeron}, respectively, Lemma~\ref{nprop1} gives $v\le u$, i.e., $\phi_0(r)\le \varepsilon^{-1}l_{p,N}r^{-\frac{2}{p-1}}$ on $[1, \infty)$. If we exchange the roles of $u$ and $v$, that is, take $u(r):=\phi_0(r)$ and $v(r):=\varepsilon l_{p,N}r^{-\frac{2}{p-1}}$, we also obtain $\varepsilon l_{p,N}r^{-\frac{2}{p-1}}\le \phi_0(r)$ on $[1, \infty)$. This completes the proof.
\end{proof}

\section{Stability of standing waves}\label{s.stability}

\begin{proof}[Proof of Theorem~\ref{thm:stab1}]
Suppose that the assertion does not hold. Then there exist a sequence $\{u_n\}_{n=1}^\infty$ of solutions for \eqref{NLS} and $\{t_n\}_{n=1}^\infty$ such that $u_n(0)\to\phi_\omega$ in $H_\alpha^1(\R^N)$ and 
\begin{equation} \label{eq:stlb}
    \inf_{n\in\N}\inf_{\theta\in\R}\|u_n(t_n)-e^{i\theta}\phi_\omega\|_{H_\alpha^1}
    >0. 
\end{equation}
Since $S_\omega(u_n(t_n))=S_\omega(u_n(0))\to S_\omega(\phi_\omega) = d(\omega)$, we see that $\{u_n(t_n)\}_{n=1}^\infty$ is a minimizing sequence for $d(\omega)$. Therefore, by Lemmas \ref{lemm3} and \ref{lemm4}, there exists a subsequence of $\{u_n(t_n)\}_{n=1}^\infty$, which will be denoted by the same notation, and $\psi_0\in\mathcal{A}_\omega$ such that $u_n(t_n)\to \psi_0$ in $H^1_\alpha(\R^N)$. Moreover, by using Theorem~\ref{thm:uniq}, we obtain
\[  \inf_{\theta\in\R}\|u_n(t_n)-e^{i\theta}\phi_\omega\|_{H_\alpha^1}
    =\inf_{\psi\in\mathcal{A}_\omega}\|u_n(t_n)-\psi\|_{H_\alpha^1}
    \le\|u_n(t_n)-\psi_0\|_{H_\alpha^1}
    \to 0   \]
as $n\to\infty$, which contradicts \eqref{eq:stlb}.
\end{proof}

\begin{proof}[Proof of Theorem~\ref{thm:stab2}]
We can show the stability in $X_0$ as in the proof of Theorem~\ref{thm:stab1}. Thus, we only show the stability in $H_\alpha^1(\R^N)$ when $\phi_0\in L^2(\R^N)$. 

Suppose that $\phi_0(x)$ is unstable. Then there exists a sequence $\{u_n\}_{n=1}^\infty$ of solutions for \eqref{NLS} and $\{t_n\}_{n=1}^\infty$ such that $u_n(0)\to\phi_0$ in $H_\alpha^1(\R^N)$ and 
\begin{equation} \label{eq:stlba}
    \inf_{n\in \N}\inf_{\theta\in\R}\|u_n(t_n)-e^{i\theta}\phi_0\|_{H_\alpha^1}
    >0. 
\end{equation}
Since $S_0(u_n(t_n))=S_0(u_n(0))\to S_0(\phi_0) = d(0)$, from Lemmas \ref{lemm3} and \ref{lemm4}, we see that there exists a subsequence of $\{u_n(t_n)\}_{n=1}^\infty$, which will be denoted by the same notation, and $\psi_0\in\mathcal{A}_0$ such that $u_n(t_n)\to \psi_0$ in $X_0$. By using the conservation of $L^2$-norm, we have $\|u_n(t_n)\|_{L^2}=\|u_n(0)\|_{L^2}\to \|\phi_0\|_{L^2}$. This implies $u_n(t_n)\to \phi_0$ in $H_\alpha^1(\R^N)$.

Moreover, by using Theorem~\ref{thm:uniq}, we obtain
\[  \inf_{\theta\in\R}\|u_n(t_n)-e^{i\theta}\phi_0\|_{H_\alpha^1}
    =\inf_{\psi\in\mathcal{A}_0}\|u_n(t_n)-\psi\|_{H_\alpha^1}
    \le\|u_n(t_n)-\psi_0\|_{H_\alpha^1}
    \to 0   \]
as $n\to\infty$, which contradicts \eqref{eq:stlba}.
\end{proof}

\appendix 

\section{Embeddings for $(L^r\cap \dot{W}^{1,p})(\R^N)$}\label{sec:embedding}
In this section, we summarize embedding results for $(L^r\cap \dot{W}^{1,p})(\R^N)$ following \cite{Brezis}.

For $p\ge 1$, we define $p^*\ge 1$ such that
\[  p^*
    :=\begin{dcases}
    \frac{Np}{N-p}
   &\text{if $1\le p<N$},
\\  \infty
   &\text{if $p\ge N$}.
    \end{dcases}   \] 
We note that if $1\le p<N$, then $p^*$ satisfies 
\[  \frac{1}{p^*}
    =\frac{1}{p}
    -\frac{1}{N}.    \]
Note that $p<p^*$ and $p\mapsto p^*$ is strictly increasing.

\begin{theorem}[Density of $C_c^\infty$] \label{thm:dense}
Let $1\le p, r<\infty$ satisfy $p\le r\le p^*$. Then $C_c^\infty(\R^N)$ is dense in $(L^r\cap \dot{W}^{1, p})(\R^N)$. 
\end{theorem}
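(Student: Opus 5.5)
Let $f\in(L^r\cap\dot W^{1,p})(\R^N)$. I would approximate $f$ in two steps, first truncating in space and then mollifying, and check that each step converges in both $\|\cdot\|_{L^r}$ and $\|\nabla\cdot\|_{L^p}$.

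\emph{Mollification.} Let $\rho_\varepsilon$ be a standard mollifier. Since $r,p<\infty$, we have $\rho_\varepsilon*f\to f$ in $L^r$ and $\nabla(\rho_\varepsilon*f)=\rho_\varepsilon*\nabla f\to\nabla f$ in $L^p$. So it suffices to approximate $f$ by compactly supported elements of the space; mollifying those then gives $C_c^\infty$ functions.

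\emph{Truncation (the main step).} Fix $\eta\in C_c^\infty(\R^N)$ with $0\le\eta\le1$, $\eta=1$ on $B_1$ and $\operatorname{supp}\eta\subset B_2$, and set $\eta_R(x)=\eta(x/R)$ and $f_R=\eta_Rf$.
\begin{itemize}
\item By dominated convergence, $f_R\to f$ in $L^r$, and $\eta_R\nabla f\to\nabla f$ in $L^p$.
\item The remaining term is $f\nabla\eta_R$, supported in the annulus $A_R=\{R<|x|<2R\}$ with $|\nabla\eta_R|\lesssim R^{-1}$.
\item By Hölder, since $p\le r$,
\[
\|f\nabla\eta_R\|_{L^p}\lesssim R^{-1}|A_R|^{\frac1p-\frac1r}\|f\|_{L^r(A_R)}\sim R^{-1+N(\frac1p-\frac1r)}\|f\|_{L^r(A_R)}.
\]
\item The condition $r\le p^*$ is exactly $N(\frac1p-\frac1r)\le1$ (when $p<N$; when $p\ge N$ it holds automatically). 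So the prefactor is bounded.
\item Since $\|f\|_{L^r(A_R)}\to0$ as $R\to\infty$ (tail of an $L^r$ function), we get $\|f\nabla\eta_R\|_{L^p}\to0$.
\end{itemize}

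I expect this truncation estimate to be the only real obstacle, because it is where the hypothesis $p\le r\le p^*$ enters. In the endpoint case $r=p^*$ the prefactor is exactly $O(1)$, so convergence relies entirely on the vanishing of the $L^r$ tail.

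\emph{Conclusion.} Combine the two steps by a diagonal argument: first choose $R$ large so that $f_R$ is close to $f$, then mollify $f_R$. Since $f_R$ has compact support, $\rho_\varepsilon*f_R\in C_c^\infty(\R^N)$, and it converges to $f_R$ in the norm of $L^r\cap\dot W^{1,p}$.
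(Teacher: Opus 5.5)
Your proposal is correct and follows essentially the same route as the paper. Both combine a cutoff with mollification and control the term $f\nabla\eta_R$ by H\"older on the annulus, which gives the factor $R^{N(\frac1p-\frac1r)-1}$; this factor is bounded precisely because $r\le p^*$, and it multiplies a vanishing $L^r$ tail. The only difference is the order of operations. The paper cuts off and mollifies simultaneously via $\zeta_n(\rho_n*u)$, so it must estimate $\rho_n*u$ on a slightly enlarged annulus. You truncate first and then mollify the compactly supported $f_R$, which avoids that small technicality at the cost of a diagonal argument.
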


\begin{proof}
Let $u\in (L^r\cap \dot{W}^{1, p})(\mathbb{R}^N)$. Let $\{\rho_n\}_{n}$ be a sequence of mollifiers such that $\operatorname{supp}\rho_n =\overline{B(0,1/n)}$. Then $\rho_n*u\in C^\infty(\R^N)$ and 
\begin{align} \label{eq:mollconv}
    \rho_n*u\xrightarrow{n\to\infty} u \quad 
    \text{in }(L^r\cap \dot{W}^{1, p})(\R^N). 
\end{align}
Let $\zeta\in C_c^\infty(\R^N)$ be a smooth cut-off function such that $0\le \zeta\le 1$, $\zeta=1$ on $B(0,1)$, and $\zeta=0$ on $B(0,2)^c$, and set $\zeta_n(x):=\zeta(x/n)$. Then $\zeta_n(\rho_n*u)\in C_c^\infty(\R^N)$. By \eqref{eq:mollconv} and Lebesgue's dominated convergence theorem, we have
\begin{equation}\label{eq:zetarho1}
\begin{aligned} 
    \|\zeta_n(\rho_n*u)-u\|_{L^{r}}
   &\le \|\zeta_n(\rho_n*u-u)\|_{L^{r}}
    +\|\zeta_n u-u\|_{L^{r}}
\\ &\le \|\zeta\|_{L^\infty}\|\rho_n*u-u\|_{L^{r}}
    +\|\zeta_n u-u\|_{L^{r}}
\\ &\xrightarrow{n\to\infty} 0.
\end{aligned}
\end{equation}
Moreover,
\begin{align}\notag
    \|\zeta_n(\rho_n*u)-u\|_{\dot{W}^{1, p}}
   &=\|(\nabla \zeta_n)(\rho_n*u)+\zeta_n(\rho_n*\nabla u)-\nabla u\|_{L^{p}}
\\ &\label{eq:est1}
    \le \frac{1}{n}\|\nabla \zeta\|_{L^\infty}\|\rho_n*u\|_{L^{p}(n<|x|<2n)}
    +\|\zeta_n(\rho_n*\nabla u)-\nabla u\|_{L^{p}}
\end{align}
For $n<|x|<2n$, we see that $B(x, 1/n)\subset\{n-1/n<|y|<2n+1/n\}$. Indeed, if $|x-y|<1/n$, then $n-1/n<|x|-|x-y|<|y|<|x|+|x-y|<2n+1/n$. Since $r\ge p$, we can estimate 
\begin{align*}
    \|\rho_n*u\|_{L^p(n<|x|<2n)}
   &=\|\rho_n*(u1_{B(x, 1/n)})\|_{L^p(n<|x|<2n)}
\\ &\le \|\rho_n*(u 1_{[n-\frac{1}{n}<|y|<2n+\frac{1}{n}]})\|_{L^p}
\\ &\le \|u\|_{L^p(n-\frac{1}{n}\le |x|\le 2n+\frac{1}{n})}
    \lesssim n^{N(\frac1p-\frac1r)}\|u\|_{L^r(|x|\ge n-\frac{1}{n})}.
\end{align*}
Thus, the first term in \eqref{eq:est1} tends to zero if $N(\frac1p-\frac1r)-1\le 0$, i.e., $r\le p^*$. The second term in \eqref{eq:est1} tends to zero because of the same estimate as \eqref{eq:zetarho1}. 
This completes the proof.
\end{proof}

\begin{remark}
In order to establish the embedding $(L^r\cap \dot{W}^{1, p})(\R^N)\hookrightarrow L^{q}(\R^N)$ for $1\le p,r<\infty$ satisfying $p\le r\le p^*$, it suffices, by the density result in Theorem~\ref{thm:dense}, to prove the embedding inequality
\begin{equation} \label{eq:gemb}
    \|u\|_{L^q}\le C\|u\|_{L^r\cap\dot{W}^{1,p}},
\end{equation}
for all $u\in C_c^\infty(\R^N)$. 

Indeed, if \eqref{eq:gemb} holds for all $u\in C_c^\infty(\R^N)$, then for any $u\in (L^r\cap \dot{W}^{1,p})(\R^N)$, the density result implies that there exists a sequence $(u_n)\subset C_c^\infty(\R^N)$ such that $u_n\to u$ in $(L^r\cap \dot{W}^{1,p})(\R^N)$. Applying \eqref{eq:gemb} to $u=u_n-u_m$, we see that $(u_n)$ is a Cauchy sequence in $L^{q}(\R^N)$; hence $u_n\to u$ in $L^{q}(\R^N)$. 

Moreover, passing to the limit as $n\to\infty$ in \eqref{eq:gemb} with $u=u_n$, we conclude that \eqref{eq:gemb} holds for all $u\in (L^r\cap \dot{W}^{1,p})(\R^N)$.
\end{remark}

\begin{theorem}[The case $p<N$] \label{cor:emb1}
Let $1\le p<N$ and $p\le r\le p^*$. Then there exists $C>0$ such that 
\begin{equation} \label{eq:sob1}
    \|u\|_{L^{p^*}}
    \le C\|\nabla u\|_{L^{p}}
\end{equation}
for all $u\in (L^r\cap \dot{W}^{1, p})(\R^N)$.
In particular, the embedding $(L^r\cap \dot{W}^{1, p})(\R^N)\hookrightarrow L^{p^*}(\R^N)$ holds.
\end{theorem}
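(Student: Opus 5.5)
The statement is the Gagliardo--Nirenberg--Sobolev inequality, extended from compactly supported functions to the space $(L^r\cap \dot{W}^{1,p})(\R^N)$. The plan is to reduce to $C_c^\infty(\R^N)$ via the preceding Remark, prove the inequality there by the classical method, and then pass to the limit.

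First I will prove \eqref{eq:sob1} for $u\in C_c^\infty(\R^N)$.

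\emph{The case $p=1$.} Here I follow the standard argument of Gagliardo and Nirenberg. For each $i$, write
\[
|u(x)|\le\int_{\R}|\partial_i u(x_1,\dots,t,\dots,x_N)|\,dt .
\]
This holds because $u$ has compact support. Multiply these $N$ bounds and take the $(N-1)$-th root of the product. Integrate successively in each variable, using the generalized H\"older inequality for $N-1$ factors at each step. This gives
\[
\|u\|_{L^{N/(N-1)}}\le\prod_{i=1}^N\|\partial_i u\|_{L^1}^{1/N}\le\|\nabla u\|_{L^1}.
\]

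\emph{The case $1<p<N$.} I apply the $p=1$ inequality to $|u|^{\gamma}$ with $\gamma=p(N-1)/(N-p)$. This function is $C^1$ with compact support because $\gamma>1$. Since $\nabla|u|^\gamma=\gamma|u|^{\gamma-1}\nabla u$, H\"older's inequality gives
\[
\|u\|_{L^{\gamma N/(N-1)}}^{\gamma}\le\gamma\,\|u\|_{L^{(\gamma-1)p'}}^{\gamma-1}\,\|\nabla u\|_{L^p}.
\]
With this choice of $\gamma$ we have $\gamma N/(N-1)=(\gamma-1)p'=p^*$. Dividing by $\|u\|_{L^{p^*}}^{\gamma-1}$ yields \eqref{eq:sob1} on $C_c^\infty(\R^N)$; if $u=0$ there is nothing to prove.

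Next I extend to the whole space. Let $u\in(L^r\cap\dot W^{1,p})(\R^N)$ with $p\le r\le p^*$. By Theorem~\ref{thm:dense} there are $u_n\in C_c^\infty(\R^N)$ with $u_n\to u$ in $L^r$ and $\nabla u_n\to\nabla u$ in $L^p$. Applying \eqref{eq:sob1} to $u_n-u_m$ shows that $(u_n)$ is Cauchy in $L^{p^*}$, so it has a limit $v$ in $L^{p^*}$. Passing to a subsequence converging almost everywhere, with the $L^r$-limit also identified a.e., we get $v=u$. Letting $n\to\infty$ in $\|u_n\|_{L^{p^*}}\le C\|\nabla u_n\|_{L^p}$ then gives \eqref{eq:sob1} for $u$. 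The embedding follows, since $\|u\|_{L^{p^*}}\le C\|u\|_{L^r\cap\dot W^{1,p}}$.

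The only delicate step is the $p=1$ iterated H\"older argument, which is classical. The role of the hypothesis $r\le p^*$ is only to make Theorem~\ref{thm:dense} applicable; no other part of the argument uses it.
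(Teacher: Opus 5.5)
Your proposal is correct and follows the paper's route. The paper cites \cite[Theorem~9.9]{Brezis} for the inequality on $C_c^\infty(\R^N)$; that proof is exactly the Gagliardo--Nirenberg $p=1$ argument followed by the $|u|^\gamma$ trick that you reproduce. The paper then extends the inequality to $(L^r\cap\dot W^{1,p})(\R^N)$ by the density argument of the preceding Remark together with Theorem~\ref{thm:dense}, as you do, and you add the welcome step of identifying the $L^{p^*}$-limit with $u$ a.e. One cosmetic point: for sign-changing or complex-valued $u$, read the identity $\nabla|u|^\gamma=\gamma|u|^{\gamma-1}\nabla u$ as the pointwise bound $|\nabla|u|^\gamma|\le\gamma|u|^{\gamma-1}|\nabla u|$, which is all you actually use.
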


\begin{proof}
For $u\in C_c^\infty(\R^N)$, the inequality \eqref{eq:sob1} is proved in \cite[Theorem~9.9 (p.278)]{Brezis}.
\end{proof}

\begin{corollary} \label{cor:emb2}
Let $1\le p<N$, $p\le r\le p^*$, and $q\in[r, p^*]$. Then there exists $C>0$ such that 
\begin{equation} \label{eq:sob2}
    \|u\|_{L^{q}}
    \le \|u\|_{L^{r}}
    +C\|\nabla u\|_{L^{p}}
\end{equation}
for all $u\in (L^r\cap \dot{W}^{1, p})(\R^N)$. In particular, the embedding $(L^r\cap \dot{W}^{1, p})(\R^N)\hookrightarrow L^{q}(\R^N)$ holds.
\end{corollary}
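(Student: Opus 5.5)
The statement to prove is Corollary~\ref{cor:emb2}: for $1\le p<N$, $p\le r\le p^*$ and $q\in[r,p^*]$, the bound \eqref{eq:sob2} holds. I will interpolate between the $L^r$ bound, which is given, and the $L^{p^*}$ bound from Theorem~\ref{cor:emb1}. By the preceding Remark, it suffices to prove \eqref{eq:sob2} for $u\in C_c^\infty(\R^N)$; density (Theorem~\ref{thm:dense}) then extends it to all of $(L^r\cap \dot{W}^{1,p})(\R^N)$. In fact, Theorem~\ref{cor:emb1} already gives $u\in L^{p^*}$ for general $u$, so one could also work directly in the full space.

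For the interpolation, fix $q\in[r,p^*]$. The endpoint cases $q=r$ and $q=p^*$ are immediate; the latter follows from \eqref{eq:sob1}. For $r<q<p^*$, choose $\theta\in(0,1)$ with
\[
\frac1q=\frac{\theta}{r}+\frac{1-\theta}{p^*}.
\]
The Hölder interpolation inequality gives
\[
\|u\|_{L^q}\le \|u\|_{L^r}^{\theta}\|u\|_{L^{p^*}}^{1-\theta}.
\]
Applying Young's inequality $a^\theta b^{1-\theta}\le \theta a+(1-\theta)b\le a+b$ then yields
\[
\|u\|_{L^q}\le \|u\|_{L^r}+\|u\|_{L^{p^*}}\le \|u\|_{L^r}+C\|\nabla u\|_{L^p},
\]
where $C$ is the constant in \eqref{eq:sob1}. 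The coefficient of $\|u\|_{L^r}$ is exactly $1$, matching \eqref{eq:sob2}. The embedding $(L^r\cap \dot{W}^{1,p})(\R^N)\hookrightarrow L^q(\R^N)$ follows at once.

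There is no real obstacle here. The only point needing care is that Theorem~\ref{cor:emb1} holds on the whole space $(L^r\cap\dot W^{1,p})(\R^N)$, so that $\|u\|_{L^{p^*}}$ is finite and the interpolation inequality applies to every $u$. Note also that when $p^*=\infty$ (not relevant here, since $p<N$) the argument would have to be modified.
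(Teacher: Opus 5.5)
Your proposal is correct and follows the paper's proof: Hölder interpolation between $L^r$ and $L^{p^*}$, the elementary bound $a^\theta b^{1-\theta}\le a+b$, and then the Sobolev inequality \eqref{eq:sob1}, which Theorem~\ref{cor:emb1} already provides on all of $(L^r\cap\dot W^{1,p})(\R^N)$. Your extra remarks on the endpoint cases and on the finiteness of $p^*$ are fine but not needed.
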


\begin{proof}
Since $r\le q\le p^*$, the interpolation inequality and \eqref{eq:sob1} imply
\begin{align*}
    \|u\|_{L^q}
   &\le \|u\|_{L^r}^\theta\|u\|_{L^{p^*}}^{1-\theta}
    \le \|u\|_{L^r}+C\|u\|_{L^{p^*}}
    \le \|u\|_{L^r}+C\|\nabla u\|_{L^{p}}
\end{align*}   
for some $\theta\in[0,1]$. This means the assertion.
\end{proof}

\begin{theorem}[The case $p=N$] \label{cor:embpN}
Let $N\le r\le q$. Then there exists $C>0$ such that
\begin{equation} \label{eq:sobpN}
    \|u\|_{L^{q}}
    \le C(\|u\|_{L^{r}}+\|\nabla u\|_{L^{N}})
\end{equation}
for all $u\in (L^r\cap \dot{W}^{1, N})(\R^N)$.
In particular, the embedding $(L^r\cap \dot{W}^{1, N})(\R^N)\hookrightarrow L^{q}(\R^N)$ holds.
\end{theorem}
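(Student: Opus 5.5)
The plan is to reduce \eqref{eq:sobpN} to smooth compactly supported functions, using the density result Theorem~\ref{thm:dense} and the remark following it. The density theorem applies here: with $p=N$ we have $p^*=\infty$, so any $r\ge N$ satisfies $p\le r\le p^*$. It therefore suffices to prove
\[
\|u\|_{L^q}\le C(\|u\|_{L^r}+\|\nabla u\|_{L^N})\quad\text{for all } u\in C_c^\infty(\R^N).
\]

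For such $u$ I would use the Gagliardo--Nirenberg-type inequality underlying the case $p=N$ in \cite{Brezis}. Since $|u|^{s}$ is $C^1$ with compact support for $s\ge 1$, apply the $W^{1,1}$ Sobolev inequality $\|v\|_{L^{N/(N-1)}}\le C\|\nabla v\|_{L^1}$ to $v=|u|^{s}$. Combined with H\"older's inequality this gives
\[
\||u|^{s}\|_{L^{N/(N-1)}}\le Cs\int|u|^{s-1}|\nabla u|\,dx\le Cs\,\|u\|_{L^{(s-1)N'}}^{s-1}\|\nabla u\|_{L^N},
\]
where $N'=N/(N-1)$. Writing $t=sN'$, this reads
\[
\|u\|_{L^{t}}^{s}\le Cs\,\|u\|_{L^{t-N'}}^{s-1}\|\nabla u\|_{L^N}.
\]
Young's inequality then yields
\[
\|u\|_{L^{t}}\le C_t\bigl(\|u\|_{L^{t-N'}}+\|\nabla u\|_{L^N}\bigr).
\]
This lets me climb from exponent $t-N'$ to exponent $t$.

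Starting from $r$, I choose $s=(r+N')/N'$, which gives a bound in $L^{r+N'}$ in terms of $\|u\|_{L^r}$ and $\|\nabla u\|_{L^N}$. Iterating, I reach $L^{r+kN'}$ for every $k$, each time with a constant depending only on the exponents. For an arbitrary $q\ge r$, pick $k$ with $r+kN'\ge q$ and interpolate between $L^r$ and $L^{r+kN'}$; the interpolation inequality absorbs the resulting product by $a^\theta b^{1-\theta}\le a+b$, exactly as in Corollary~\ref{cor:emb2}. The case $q=r$ is trivial.

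The main delicate point is checking that $|u|^s$ is admissible in the $W^{1,1}$ Sobolev inequality and that the chain rule $|\nabla|u|^s|\le s|u|^{s-1}|\nabla u|$ holds. For $s\ge1$ this is fine: $|u|^s$ is Lipschitz with compact support, and one can approximate by $(|u|^2+\epsilon^2)^{s/2}-\epsilon^s$ if needed. Beyond that, the only care required is tracking that each constant depends on $N,r,q$ alone. Finally, the remark after Theorem~\ref{thm:dense} extends the inequality from $C_c^\infty(\R^N)$ to all of $(L^r\cap\dot W^{1,N})(\R^N)$, which gives the embedding.
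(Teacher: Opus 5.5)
Your proposal is correct and follows the paper's proof. For $u\in C_c^\infty(\R^N)$ the paper takes the inequality $\|u\|_{L^{mN/(N-1)}}\le C(\|u\|_{L^{(m-1)N/(N-1)}}+\|\nabla u\|_{L^N})$ from Brezis's proof of Corollary~9.11, chooses $m=1+r\frac{N-1}{N}$ (your $s$), and then interpolates and iterates in steps of $N/(N-1)$, exactly as you do; the only differences are that you derive that inequality yourself (the $W^{1,1}$ Sobolev inequality for $|u|^s$, then H\"older and Young) and that you state explicitly the density reduction via Theorem~\ref{thm:dense}, which the paper leaves implicit.
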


\begin{proof}
We give the proof only for $N\ge 2$. For $u\in C_c^\infty(\R^N)$, by \cite[Proof of Corollary 9.11 (p.281)]{Brezis} we have
\begin{equation}
    \|u\|_{L^{\frac{mN}{N-1}}}
    \le C\bigl(\|u\|_{L^{\frac{(m-1)N}{N-1}}}
    +\|\nabla u\|_{L^N}\bigr)
\end{equation}
for all $m\ge 1$. We first choose $m$ so that
\begin{align*}
    \frac{(m-1)N}{N-1}=r
   &\iff m=1+r\frac{N-1}{N}.
\end{align*}
Then
\begin{align*}
    \frac{mN}{N-1}
    =r+\frac{N}{N-1},
\end{align*}
and hence
\begin{equation}
    \|u\|_{L^{r+\frac{N}{N-1}}}
    \le C(\|u\|_{L^{r}}
    +\|\nabla u\|_{L^N}).
\end{equation}
Since trivially
\begin{equation}
    \|u\|_{L^{r}}
    \le \|u\|_{L^{r}}
    +\|\nabla u\|_{L^N},
\end{equation}
interpolation yields
\begin{equation}
    \|u\|_{L^{q}}
    \le C(\|u\|_{L^{r}}
    +\|\nabla u\|_{L^N}),
    \quad \forall q\in[r, r+\tfrac{N}{N-1}].
\end{equation}
Reiterating this argument with 
$m=2+r\frac{N-1}{N},\ 3+r\frac{N-1}{N},\dots$, 
we obtain \eqref{eq:sobpN} for all $q\ge r$.
\end{proof}

\begin{theorem}[The case $p>N$]\label{cor:emb3}
Let $N<p\le r$. Then there exists $C>0$ such that 
\begin{equation} \label{eq:sob3}
    \|u\|_{L^\infty}\le \|u\|_{L^r}+C\|\nabla u\|_{L^p}
\end{equation}
for all $u\in (L^r\cap \dot{W}^{1, p})(\R^N)$. In particular, the embedding $(L^r\cap \dot{W}^{1, p})(\R^N)\hookrightarrow C_0(\R^N)$ holds, where 
\[  C_0(\R^N)
    :=\{f\in C(\R^N)\,|\; 
    \lim_{|x|\to \infty}f(x)=0\}.   \]
\end{theorem}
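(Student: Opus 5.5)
The statement to prove is Theorem~\ref{cor:emb3} (the case $p>N$ with $p\le r$). The plan is to reduce, via the Remark following Theorem~\ref{thm:dense}, to proving \eqref{eq:sob3} for $u\in C_c^\infty(\R^N)$. Density applies because $p\le r$ and $p^*=\infty$ when $p>N$. Once \eqref{eq:sob3} holds on $C_c^\infty$, any $u$ in $(L^r\cap\dot W^{1,p})(\R^N)$ is a limit of $u_n\in C_c^\infty$ in that space. The estimate makes $(u_n)$ Cauchy in $L^\infty$, so $u_n\to u$ uniformly. Since uniform limits of functions in $C_c(\R^N)$ lie in $C_0(\R^N)$, this gives the embedding into $C_0$.

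For $u\in C_c^\infty(\R^N)$ and a fixed point $x$, I will use a local Morrey estimate on the unit ball. Let $B=B(x,1)$ and $\bar u_B=|B|^{-1}\int_B u$. The classical Morrey inequality on balls (e.g.\ \cite[proof of Theorem~9.12]{Brezis}) gives
\[
|u(x)-\bar u_B|\le C\,r_B^{1-N/p}\|\nabla u\|_{L^p(B)}
\]
with $r_B=1$. The mean is controlled by Hölder: $|\bar u_B|\le |B|^{-1/r}\|u\|_{L^r(B)}$. Combining these yields $|u(x)|\le C'\|u\|_{L^r}+C\|\nabla u\|_{L^p}$.

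To obtain the constant exactly $1$ in front of $\|u\|_{L^r}$, as written in \eqref{eq:sob3}, I will choose the radius $R$ of the ball so that $|B(x,R)|^{-1/r}\le1$, i.e.\ $|B_R|\ge1$. The Morrey term then scales as $R^{1-N/p}$, which is a fixed constant absorbed into $C$. Since $1-N/p>0$, a larger $R$ only enlarges $C$, and it remains finite.

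The main point to check is the local Morrey estimate $|u(x)-\bar u_B|\lesssim R^{1-N/p}\|\nabla u\|_{L^p(B)}$. It follows from the standard integral bound
\[
|u(x)-u(y)|\le \int_0^1|\nabla u(x+t(y-x))|\,|y-x|\,dt,
\]
averaged over $y\in B$, then a change of variables and Hölder with exponent $p>N$, which makes $\int_B|x-z|^{(1-N)p'}\,dz$ finite. This is routine, and I expect no real obstacle. Continuity and decay at infinity come for free from the density argument.
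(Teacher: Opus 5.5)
Your proposal is correct and is essentially the paper's argument. The paper applies Brezis's local Morrey estimate on a cube $Q\ni x$ of side length $1$, bounds the mean by H\"older as $|Q|^{-1/r}\|u\|_{L^r}=\|u\|_{L^r}$, and concludes by density and the closedness of $C_0(\R^N)$ in $L^\infty(\R^N)$; your ball with $|B(x,R)|\ge 1$ plays exactly the role of the unit cube, so the difference is cosmetic.
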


\begin{proof}
For $u\in C_c^\infty(\R^N)$, by \cite[Proof of Theorem~9.12 (pp.~282, 283)]{Brezis} we have 
\[  |u(x)|
    \le \frac{1}{|Q|}\Bigl|\int_Q u\Bigr|
    +C\|\nabla u\|_{L^p},\quad 
    \forall x\in\R^N.  \]
where $Q$ is a cube in $\R^N$ with side length $1$ containing $x$ (in particular, $|Q|=1$). Moreover, by H\"older,
\[  \frac{1}{|Q|}\Bigl|\int_Q u\Bigr|
    \le |Q|^{-1/r}\|u\|_{L^r}.    \]
Thus, $u$ satisfies \eqref{eq:sob3}. By the density (Theorem~\ref{thm:dense}) and the fact that $C_0(\R^N)$ is closed subspace in $L^\infty(\R^N)$, we obtain the conclusion.
\end{proof}

\begin{proposition}
Let $p>1$ if $N=2$ and $1<p<2$ if $N=3$. Then the embedding
\[  (L^{p+1}\cap \dot{W}^{1, 2}\cap\dot{W}^{2, \frac{p+1}{p}})(\R^N)
    \hookrightarrow C_0(\R^N)  \]
holds.
\end{proposition}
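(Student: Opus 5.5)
The plan is to pass from second derivatives to a first-derivative space covered by Theorem~\ref{cor:emb3}, and then conclude with the density argument used there. Set $q:=\frac{p+1}{p}\in(1,2)$. Take $u\in (L^{p+1}\cap \dot{W}^{1,2}\cap\dot{W}^{2,q})(\R^N)$ and put $g:=\partial_j u$ for a fixed $j$. Then $g\in (L^2\cap \dot W^{1,q})(\R^N)$.

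\emph{Gradient integrability.} We first show $\nabla u\in L^{s}(\R^N)$ for some $s>N$. Since $q<2\le N$, we have $q^*=\frac{Nq}{N-q}$, and we want to apply Theorem~\ref{cor:emb1} or Corollary~\ref{cor:emb2} with exponent $q$ and $r=2$. This needs $q\le 2\le q^*$. The first inequality holds. The second is $2(N-q)\le Nq$, i.e.\ $q\ge \frac{2N}{N+2}$; this is $q\ge1$ for $N=2$ and $q\ge 6/5$ for $N=3$.
\begin{itemize}
\item For $N=2$ it holds for every $p>1$. We then get $\nabla u\in L^{s}$ for all $s\in[2,q^*]$, with $q^*=\frac{2q}{2-q}=\frac{2(p+1)}{p-1}>2=N$.
\item For $N=3$, $q\ge 6/5$ means $p\le 5$, which is true since $p<2$. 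Then $q^*=\frac{3q}{3-q}=\frac{3(p+1)}{2p-1}$, and $q^*>3$ iff $p+1>2p-1$ iff $p<2$. This is exactly where the hypothesis $p<2$ enters.
\end{itemize}
In both cases we choose $s\in(N,q^*]$ with $s\ge 2$, and obtain $\|\nabla u\|_{L^s}\lesssim \|\nabla u\|_{L^2}+\|D^2u\|_{L^q}$, using Corollary~\ref{cor:emb2} together with the density of Theorem~\ref{thm:dense}.

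\emph{Applying Theorem~\ref{cor:emb3}.} Next we apply Theorem~\ref{cor:emb3} with gradient exponent $s>N$ and $r=\max(s,p+1)$. We need $u\in L^{r}$, and since $u\in L^{p+1}$ the only issue is the case $s>p+1$. In that case we would instead want $r=p+1$ with $s\le p+1$, which is not allowed directly. To handle it, first get $u\in L^{r'}$ for every large $r'$ from $u\in L^{p+1}\cap\dot W^{1,2}$: for $N=2$ this follows from Theorem~\ref{cor:embpN} with $r=p+1\ge 2$. For $N=3$ one can alternatively shrink $s$ toward $3^+$, which is at most $p+1$ only when $p\ge 2$. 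So for $N=3$ it is better to reprove \eqref{eq:sob3} directly with $r=p+1$: the Morrey estimate on a unit cube only uses $\frac{1}{|Q|}|\int_Q u|\le \|u\|_{L^{p+1}}$, and the condition $p\le r$ in Theorem~\ref{cor:emb3} is not really needed there. This gives
\[
\|u\|_{L^\infty}\lesssim \|u\|_{L^{p+1}}+\|\nabla u\|_{L^s}
\]
for smooth compactly supported $u$.

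\emph{Density and conclusion.} Finally, $C_c^\infty$ is dense in the triple-intersection space: mollify, then cut off. The cutoff error terms are controlled as in Theorem~\ref{thm:dense}, with the extra term $n^{-2}\|u\|_{L^q(n<|x|<2n)}$ handled via $u\in L^{p+1}$ and Hölder. It gives a factor $n^{N(1/q-1/(p+1))-2}$, whose exponent is $N\frac{p-1}{p+1}-2<0$ for $N=2$, and also for $N=3$ when $p<5$. Since $C_0$ is closed in $L^\infty$, the estimate above then yields the embedding. The main obstacle is this bookkeeping of exponents, namely making sure the Morrey step can use $L^{p+1}$ rather than a higher $L^r$; this is resolved by the cube-average argument.
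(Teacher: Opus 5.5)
Your proof is correct and follows the paper's skeleton. Corollary~\ref{cor:emb2} applied to $\partial_j u\in(L^2\cap\dot W^{1,\frac{p+1}{p}})(\R^N)$ gives $\nabla u\in L^s$ with $N<s\le(\frac{p+1}{p})^*$, and the Morrey bound of Theorem~\ref{cor:emb3} then concludes. For $N=2$ your argument is identical to the paper's.

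For $N=3$ you diverge because you thought $u$ could not be placed in some $L^{r}$ with $r\ge s$. In fact the same upgrade you used for $N=2$ is available. Corollary~\ref{cor:emb2} with gradient exponent $2$ and Lebesgue exponent $p+1\in(2,3)$ gives $u\in L^{r}$ for all $r\in[p+1,6]$. Moreover $(\frac{p+1}{p})^*=\frac{3(p+1)}{2p-1}<6$ for $p>1$. So the paper simply takes $r\in(\max\{p+1,(\frac{p+1}{p})^*\},2^*)$ and applies Theorem~\ref{cor:emb3} verbatim. It thereby chains embeddings of two-exponent spaces whose density is already covered by Theorem~\ref{thm:dense}.

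Your workaround is still valid. The unit-cube bound $|u(x)|\le|\int_Q u|+C\|\nabla u\|_{L^s}$ only needs $|\int_Q u|\le\|u\|_{L^{p+1}}$. Your density argument in the triple intersection also checks out: the $\nabla^2\zeta_n$ term has exponent $N\frac{p-1}{p+1}-2<0$, and the remaining cutoff terms are those of Theorem~\ref{thm:dense} with (gradient, Lebesgue) exponents $(\frac{p+1}{p},2)$ and $(2,p+1)$.

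The trade-off is this: your route never raises the integrability of $u$ above $L^{p+1}$, but it costs a modified Morrey estimate and a new density lemma. The paper's route uses the appendix results as black boxes.
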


\begin{proof}
First, Theorem~\ref{cor:embpN} and Corollary~\ref{cor:emb2} imply
\begin{equation}\label{eq:emb4}
    (L^{p+1}\cap \dot{W}^{1, 2})(\R^N)
    \hookrightarrow L^q(\R^N)
    \quad \text{for all } q\in \begin{dcases}
    [p+1, \infty) & \text{if }N=2,
\\  [p+1, 2^*] & \text{if }N=3.
    \end{dcases}
\end{equation}
We note that
\begin{align*}
    \Bigl(\frac{p+1}{p}\Bigr)^*
    =\frac{N(p+1)}{Np-(p+1)}
    =\begin{dcases}
    \frac{2(p+1)}{p-1}>2 &
    \text{if }N=2,
\\  \frac{3(p+1)}{2p-1}>3 &
    \text{if }N=3 \text{ and }1<p<2.
    \end{dcases}
\end{align*}
Since $1<\frac{p+1}{p}<N$ and $\frac{p+1}{p}\le 2\le (\frac{p+1}{p})^*$, Corollary~\ref{cor:emb2} implies
\begin{equation}\label{eq:emb5}
    (\dot{W}^{1, 2}\cap\dot{W}^{2, \frac{p+1}{p}})(\R^N)
    \hookrightarrow \dot{W}^{1, (\frac{p+1}{p})^*}(\R^N).
\end{equation}
Since $\max\{p+1, (\frac{p+1}{p})^*\}<2^*$, we can take
$q\in (\max\{p+1, (\frac{p+1}{p})^*\}, 2^*)$.
Then $N<(\frac{p+1}{p})^*<q$, and by Theorem~\ref{cor:emb3} we obtain
\begin{equation}\label{eq:emb6}
    (L^q\cap\dot{W}^{1, (\frac{p+1}{p})^*})(\R^N)
    \hookrightarrow C_0(\R^N).
\end{equation}
Combining \eqref{eq:emb4}, \eqref{eq:emb5}, and \eqref{eq:emb6} yields the assertion.
\end{proof}

\section*{Acknowledgements}
N.F. was supported by JSPS KAKENHI Grant Number JP26K00612.

Y.O. was supported by 
Research Fellowship Promoting International Collaboration,
The Mathematical Society of Japan.

\printbibliography
\end{document}